\theoremstyle{definition}
\newtheorem{theorem}{Theorem}[section]
\newtheorem{proposition}[theorem]{Proposition}
\newtheorem{lemma}[theorem]{Lemma}
\newtheorem{remark}[theorem]{Remark}
\newtheorem{example}[theorem]{Example}
\newtheorem{conjecture}[theorem]{Conjecture}
\begin{document}

\title{\normalsize\bf CROSSING NUMBER OF AN ALTERNATING KNOT AND CANONICAL GENUS OF ITS WHITEHEAD DOUBLE}

\author{\small 
HEE JEONG JANG and SANG YOUL LEE
\smallskip\\
{\small\it 
Department of Mathematics, Pusan National University,
}\\ 
{\small\it Busan 609-735, Korea}\\
{\small\it heejeong@pusan.ac.kr}\\
{\small\it sangyoul@pusan.ac.kr}}

\renewcommand\leftmark{\centerline{\footnotesize 
H. J. Jang and S. Y. Lee}}
\renewcommand\rightmark{\centerline{\footnotesize 
Crossing number of an alternating knot and canonical genus}}

\maketitle

\begin{abstract}
A conjecture proposed by J. Tripp in 2002 states that the crossing number of any knot coincides with the canonical genus of its Whitehead double. In the meantime, it has been established that this conjecture is true for a large class of alternating knots including $(2, n)$ torus knots, $2$-bridge knots, algebraic alternating knots, and alternating pretzel knots. In this paper, we prove that the conjecture is not true for any alternating $3$-braid knot which is the connected sum of two torus knots of type $(2, m)$ and $(2, n)$. This results in a new modified conjecture that the crossing number of any prime knot coincides with the canonical genus of its Whitehead double. We also give a new large class of prime alternating knots satisfying the conjecture, including all prime alternating $3$-braid knots.
\end{abstract}

\noindent{\it Mathematics Subject Classification 2000}: 57M25; 57M27.

\noindent{\it Key words and phrases}: Alternating knot; $3$-braid knot; canonical genus; crossing number; Morton's inequality; Whitehead double; Tripp's conjecture.


\section{Introduction}\label{sec:intro}

In 2002, J. Tripp \cite{Tri} proved that the canonical genus of a Whitehead double of a torus knot $T(2,n)$ of type $(2,n)$ is equal to $n$, the crossing number of $T(2,n)$. To prove this, he used Morton's inequality \cite{Mot} and verified that the maximal $z$-degree $\max\deg_z P_{W_\pm(T(2,n),m)}(v,z)$ of the HOMFLYPT polynomial of the positive/negative $m$-twisted Whitehead double $W_\pm(T(2,n),m)$ of $T(2,n)$ is equal to two times of the crossing number $c(T(2,n))$, i.e., $\max\deg_z P_{W_\pm(T(2,n),m)}(v,z)=2c(T(2,n))$, which implies immediately the result. Motivating this, he conjectured the following: 

\begin{conjecture}\cite{Tri}\label{Tripp-conj-0}
The crossing number of any knot coincides with the canonical genus of its Whitehead double.
\end{conjecture}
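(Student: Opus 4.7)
The plan is to establish $g_c(W_\pm(K,m)) = c(K)$ by proving the two inequalities separately. The upper bound $g_c(W_\pm(K,m)) \leq c(K)$ is the easy direction. Starting from a minimal crossing diagram $D$ of $K$ with $c(K)$ crossings, form the standard doubled diagram of $W_\pm(K,m)$ by taking two parallel copies of $D$ closed off by a clasp and $m$ full twists. Seifert's algorithm applied to this diagram yields a canonical Seifert surface whose genus, computed from $g = (c-s+1)/2$ by a careful count of the doubled crossings and Seifert circles contributed by $D$ plus the fixed contribution of the clasp-twist region, equals $c(K)$. This construction is purely diagrammatic and should work for every $K$.

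The substantive direction is the lower bound $g_c(W_\pm(K,m)) \geq c(K)$. Following Tripp's original strategy for $T(2,n)$, I would invoke Morton's inequality $\max\deg_z P_L(v,z) \leq 2 g_c(L)$ and reduce the problem to proving the HOMFLYPT bound $\max\deg_z P_{W_\pm(K,m)}(v,z) \geq 2c(K)$. The natural way to access the left-hand side is to apply the HOMFLYPT skein relation at the clasp and at the $m$ twist crossings, expressing $P_{W_\pm(K,m)}$ as an explicit linear combination of HOMFLYPT polynomials of the $2$-parallel cable of $K$ and of its crossing-change/smoothing modifications. One then hopes the top $z$-degree of these building blocks can be controlled by invariants of $K$ alone; for an alternating $K$ one would work on a reduced alternating diagram and try to push the bound to $2c(K)$ using adequate-state estimates and Murasugi-sum decompositions of the canonical surface. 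For the special classes where the conjecture is known---$(2,n)$ torus knots, $2$-bridge knots, algebraic alternating knots, alternating pretzel knots---this succeeds because each admits a recursive rational-tangle or Conway-algebraic decomposition that behaves well under cabling, which makes the top $z$-degree computable by induction.

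The main obstacle, however, is that in the stated generality the conjecture is \emph{too strong}: as the abstract already announces, for composite knots of the form $T(2,m)\# T(2,n)$ the equality fails. The underlying reason is that the Whitehead double does not respect connected sum---$W_\pm(K_1 \# K_2, m)$ is \emph{not} $W_\pm(K_1,m)\# W_\pm(K_2,m)$---and the clasp can effectively ``migrate'' across a decomposing sphere in a way that collapses the naive $c(K_1)+c(K_2)$ lower bound coming from any skein-theoretic estimate built factorwise. Any proof along the lines above must therefore either build primeness of $K$ into the skein-theoretic lower bound (yielding the modified conjecture highlighted in the abstract) or else quantify precisely how much canonical genus is lost under connected sum; this prime-versus-composite dichotomy is where I expect the real difficulty to lie, and it is presumably what forces the paper to restrict to prime alternating knots, such as prime alternating $3$-braid knots, when seeking a uniform positive result.
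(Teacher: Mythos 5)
You have correctly identified the essential point: the statement you were asked to prove is a conjecture that this paper \emph{refutes}, not proves, and no proof along the lines you sketch (or any other lines) can exist in the stated generality. Your assessment agrees with the paper's Theorem \ref{main-thm1}, which shows that for $K_{i,j}=K_i\sharp K_j$ (a connected sum of $(2,i)$- and $(2,j)$-torus knots) one has $g_c(W_\pm(K_{i,j},m))=c(K_{i,j})-1$, strictly less than $c(K_{i,j})$; the paper then retreats to the modified Conjecture \ref{jl-conj} for prime knots, exactly as you anticipate. Your first paragraph (the upper bound $g_c(W_\pm(K,m))\le c(K)$ from the canonical diagram over a minimal diagram) is the paper's Proposition \ref{prop1-cr-nbr-cg-wd}, and your proposed lower-bound strategy --- Morton's inequality plus a skein-tree computation of $\max\deg_z P_{W_\pm(K,m)}$ --- is precisely the method the paper uses in its positive results (Theorems \ref{main-thm2} and \ref{main-thm3}).

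One correction to your diagnosis of \emph{why} the composite case fails. The mechanism is not that "the clasp migrates across a decomposing sphere"; the clasp plays no special role in the loss. The paper's upper bound $g_c(W_\pm(K_{i,j},m))\le c(K_{i,j})-1$ comes from an explicit isotopy of the standard doubled diagram of the connected-sum diagram $L_{i,j}$ to a diagram $D'$ (Fig.~\ref{con-2-1}) in which Seifert's algorithm produces \emph{two more} Seifert circles than expected ($2(i+j)+5$ circles on $4(i+j)+2$ crossings), so the canonical surface genus drops by exactly one; the isotopy exploits the band of parallel strands running through the connected-sum neck. The matching lower bound is then forced by Lemma \ref{lem-max-d-lij}, a skein-tree induction showing $\max\deg_z P_{W_2(L_{i,j})}(v,z)=2c(L_{i,j})-3$ (rather than $2c-1$), whence $\max\deg_z P_{W_\pm(K_{i,j},m)}=2c-2$ and Morton gives $g_c\ge c-1$. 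So the obstruction you suspected is real, but it is detected concretely by a Seifert-circle count and a HOMFLYPT degree deficit of exactly $2$, not by any qualitative failure of Whitehead doubling to respect connected sum.
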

 
In \cite{Nak}, T. Nakamura had extended Tripp's argument to show that Conjecture \ref{Tripp-conj-0} for $2$-bridge knots holds, and proposed the following:

\begin{conjecture}\cite{Nak}\label{Nakam-conj-0}
For any alternating knot $K$ of crossing number $c(K)$, we have $\max\deg_z P_{W_\pm(K,m)}(v,z)=2c(K)$. Therefore the canonical genus of a Whitehead double of $K$ is equal to $c(K)$.
\end{conjecture}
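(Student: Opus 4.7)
The plan is to follow the Tripp--Nakamura framework, which sandwiches the canonical genus $g_c(W_\pm(K,m))$ between a geometric upper bound and a polynomial lower bound. For the upper bound, starting from a reduced alternating diagram $D$ of $K$ realizing $c(K)$ crossings, I would build a standard diagram $\widetilde D$ of the twisted Whitehead double $W_\pm(K,m)$ by taking the blackboard $2$-parallel of $D$, adjusting by $m$ full twists, and inserting the Whitehead clasp. Applying Seifert's algorithm to $\widetilde D$ and counting Seifert circles against crossings shows that the resulting canonical Seifert surface has genus exactly $c(K)$, so $g_c(W_\pm(K,m)) \le c(K)$.

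For the matching lower bound, Morton's inequality $2g_c(L) \ge \max\deg_z P_L(v,z)$ reduces the problem to showing
\[
\max\deg_z P_{W_\pm(K,m)}(v,z) \ge 2c(K).
\]
I would compute the top $z$-degree of the HOMFLYPT polynomial of $\widetilde D$ by resolving the skein relation at each of the $c(K)$ crossings of $D$, tracking the leading $z$-term through the skein tree. The template from the $(2,n)$-torus and $2$-bridge cases suggests that at each crossing of $D$ one smoothing of the corresponding pair of parallel crossings in $\widetilde D$ kills a Seifert circle while the other preserves it, so each crossing of $D$ should contribute a factor of $z^2$ to the leading term, giving the desired bound.

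The main obstacle is verifying that no cancellation occurs in this skein sum for a \emph{general} alternating knot. In every case where the conjecture is known---$(2,n)$-torus, $2$-bridge, algebraic alternating, alternating pretzel---the diagram carries a canonical recursive or plumbing description (continued fraction, algebraic tangle tree, or pretzel pattern) that collapses the skein tree to an explicit induction with controllable leading coefficients. For an arbitrary reduced alternating diagram no such decomposition is available, and the parallel strands of the Whitehead double can interact across the clasp and across distant features of the diagram, making destructive interference a priori possible. I would attempt to control this by seeking an intrinsic Kauffman-state or Murasugi-sum interpretation of the top $z$-coefficient that remains manifestly nonzero on any reduced alternating diagram.

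If such cancellations genuinely occur, the most natural place for them to arise is along a separating sphere, i.e.\ at a connected-sum decomposition $K = K_1 \# K_2$, where the two halves of the Whitehead double can decouple in the HOMFLYPT skein but the crossing number is still additive. Consequently I would test the argument first on composite alternating knots, in particular $T(2,m)\#T(2,n)$; a breakdown there would not be a defect of the method but rather evidence that the conjecture itself requires the primeness restriction indicated in the abstract.
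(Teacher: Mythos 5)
The statement you were asked to prove is a \emph{conjecture} (Nakamura's), and the paper contains no proof of it; on the contrary, the paper's main point is that it is \emph{false} in general. Theorem \ref{main-thm1} together with Lemma \ref{lem-max-d-lij} shows that for $K_{i,j}=T(2,i)\,\sharp\,T(2,j)$ with $i,j$ odd $\geq 3$ one has $\max\deg_z P_{W_\pm(K_{i,j},m)}(v,z)=2c(K_{i,j})-2$ and $g_c(W_\pm(K_{i,j},m))=c(K_{i,j})-1$, and Remark \ref{conj-rmk-1}(2) shows the defect grows linearly in the number of connected summands. So no argument along the lines you propose can close the gap you flag at the end: the "destructive interference" you worry about genuinely occurs, and it occurs exactly where you predict, at a connected-sum sphere. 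To your credit, your final paragraph identifies the correct counterexample class $T(2,m)\,\sharp\,T(2,n)$ and correctly diagnoses that the conjecture needs a primeness hypothesis; that is precisely the paper's Conjectures \ref{jl-conj} and \ref{jl-conj-2}.

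Two more specific comments on the body of your attempt. The upper-bound half is sound and is the paper's Proposition \ref{prop1-cr-nbr-cg-wd}: Seifert's algorithm on the standard doubled diagram of a $c(K)$-minimizing diagram always gives $g_c(W_\pm(K,m))\leq c(K)$. But for composite knots this upper bound is not sharp: the paper isotopes the standard diagram of $W_+(L_{i,j},w(L_{i,j}))$ to a diagram $D'$ (Fig.~\ref{con-2-1}) with \emph{more} Seifert circles relative to crossings, yielding genus $i+j-1$, which then matches the Morton lower bound $\tfrac12\max\deg_z P=c(K_{i,j})-1$ coming from the skein computation of Lemma \ref{lem-max-d-lij}. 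So the mechanism of failure is not merely that the top $z$-coefficient cancels in the skein sum; it is that the doubled diagram of a composite knot admits a genus-reducing isotopy that the prime case does not. Your heuristic that "each crossing of $D$ contributes a factor of $z^2$" is exactly the step that breaks: in the paper's skein tree for the connected-sum tangle (Fig.~\ref{tangle2} and equations (\ref{tw-q-1})--(\ref{tw-q-6})), the $z^2$-contributions from the nodes $B$ and $E_1$ are too low in degree to help, and the maximum degree tops out at $2(i+j)-3$ for the doubled link rather than $2(i+j)-1$.
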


He also showed that Conjecture \ref{Nakam-conj-0} for a non-alternating knot (actually the torus knot of type $(4,3)$) is false.

In \cite{BJ}, M. Brittenham and J. Jensen showed that Conjecture \ref{Nakam-conj-0} holds for alternating pretzel knots $P(k_1,\ldots,k_n), k_1,\ldots, k_n \geq 1$ \cite[Theorem 1]{BJ}. To prove this, they provided a method of building new knots $K$ with 
$\max\deg_z P_{W_\pm(K,m)}(v,z)$ $=2c(K)$ from old ones $K'$ (For more details, see \cite[Section 3]{Hj} or \cite{BJ}).
Actually, Brittenham and Jensen gave a larger class of alternating knots than the class of $(2,n)$-torus knots, $2$-bridge knots, and alternating pretzel knots.
In addition, H. Gruber \cite{Gr1} extended Nakamura's result to algebraic alternating knots in Conway's sense in a different way. Quite recently, the authors \cite{Hj} gave a new infinite family of alternating knots for which Conjecture \ref{Nakam-conj-0} holds, which is an extension of the previous results of Tripp \cite{Tri}, Nakamura \cite{Nak} and Brittenham-Jensen \cite{BJ}.

\bigskip

For $n\geq 2$, let $B_{n}$ denote the $n$-strand (geometric) braid group which has a group presentation whose generators are $\sigma_1, \sigma_2, \ldots, \sigma_{n-1}$ as shown in Fig.~\ref{fig-generators} and defining relations are:
\begin{align*}
&\sigma_i\sigma_j=\sigma_j\sigma_i ~\text{if}~ |i-j| \geq 2, 1 \leq i, j \leq n-1;\\
&\sigma_i\sigma_{i+1}\sigma_i
=\sigma_{i+1}\sigma_{i}\sigma_{i+1},
1 \leq i \leq n-2.
\end{align*}

\begin{figure}
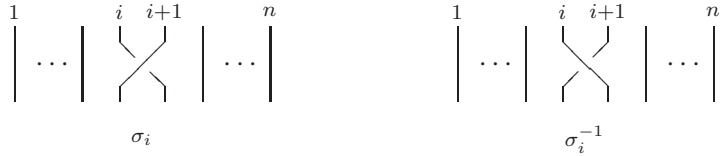

\centerline{ \xy (20,5);(20,7) **@{-},
(20,13);(20,15) **@{-}, (20,7);(26,13) **@{-}, (20,13);(22,11)
**@{-}, (24,9);(26,7) **@{-}, (26,5);(26,7) **@{-},
(26,13);(26,15) **@{-},
(15,5);(15,15) **@{-}, (6,5);(6,15) **@{-}, (31,5);(31,15) **@{-},
(40,5);(40,15) **@{-}, (11,10)*{\cdots}, (36,10)*{\cdots},
(6,17)*{_{1}}, (20,17)*{_{i}}, (40,17)*{_{n}}, (26,17)*{_{i+1}},
(23,0)*{_{\sigma_i}},
\endxy
  \qquad\qquad\qquad
\xy (20,5);(20,7) **@{-}, (20,13);(20,15) **@{-}, (20,13);(26,7)
**@{-}, (20,7);(22,9) **@{-}, (24,11);(26,13) **@{-},
(26,5);(26,7) **@{-}, (26,13);(26,15) **@{-},
(15,5);(15,15) **@{-}, (6,5);(6,15) **@{-}, (31,5);(31,15) **@{-},
(40,5);(40,15) **@{-}, (11,10)*{\cdots}, (36,10)*{\cdots},
(6,17)*{_{1}}, (20,17)*{_{i}}, (40,17)*{_{n}}, (26,17)*{_{i+1}},
(23,0)*{_{\sigma_i^{-1}}},
\endxy}
{}\vspace*{5pt}\caption{The standard generators of $B_{n}$}
\label{fig-generators}
\end{figure}

The {\it product} $ab$ of two braids $a$ and $b$ in $B_n$ is obtained by putting them end to end and rescaling. An element of $B_n$ is called an {\it $n$-braid}. The {\it closure} of an $n$-braid $b \in B_n$ is the link, denoted by $\hat b$, obtained by connecting the upper points of its strands to the lower ones by $n$ disjoint arcs, and is sometimes called a {\it closed braid}. As is well known, any link $L$ is the closure of a braid $b \in B_n$ for some $n\geq 2$. In this case, we say that $b$ represents $L$ or $b$ is a (braid) representative of $L$. The minimum number of braid strings needed to represent a link $L$ is called the {\it braid index} of the link $L$. For more details, we refer to \cite{BZ,Kaw}.

The class of all knots and links of braid index $3$ is a very special class, like the class of the torus knots and links, the class of the 2-bridge knots and links, the class of the algebraic knots and links, and the class of the pretzel knots and links, etc. These special classes of knots and links are rich enough to serve as a source of examples on which a researcher may be able to test various conjectures \cite{BM}. As already mentioned above Conjecture \ref{Nakam-conj-0} holds for alternating knots belong to the latter four classes and so does Conjecture \ref{Tripp-conj-0}. In this paper, we are going to test Conjectures \ref{Tripp-conj-0} and \ref{Nakam-conj-0} for alternating knots of braid index $3$.

K. Murasugi \cite{Mur2} and A. Stoimenow \cite{Sto} gave classifications of alternating links of braid index $3$. We recall Stoimenow's theorem for our convenience.
We call an $n$-braid $\beta=\sigma_{i_1}^{\epsilon_1} \cdots \sigma_{i_k}^{\epsilon_k}, \epsilon_i=\pm 1, 1 \leq i_1, \ldots, i_k \leq$ $n-1,$ an {\it alternating braid} if $\epsilon_j=\epsilon_\ell$ iff $i_j \equiv i_\ell \pmod 2$. For a positive integer $k$, the {\it $(2, k)$-torus link} is just the closure of $2$-braid $\sigma_1^k \in B_2$.

\begin{theorem}\cite[Theorem 4]{Sto}\label{ch-alk-b3}
Let $L$ be an alternating link of braid index $3$. Then (and only then) $L$ is
\begin{itemize}
\item [(i)] the connected sum of two $(2, k)$-torus links (with parallel orientation), or
\item [(ii)] an alternating $3$-braid link (i.e., the closure of an alternating $3$-braid, including split unions of a $(2,k)$-torus link and an unknot and the $3$ component unlink), or
\item [(iii)] a pretzel link $P(1,p,q,r)$ with $p,q,r \geq 1$(oriented so that the twists corresponding to $p,q,r$ are parallel).
\end{itemize}
\end{theorem}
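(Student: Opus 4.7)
My plan is to combine three ingredients: the Morton-Franks-Williams (MFW) lower bound on braid index coming from the HOMFLY polynomial, Yamada's upper bound in terms of the number of Seifert circles, and the Menasco-Thistlethwaite flyping theorem for alternating links. The ``if'' direction of the theorem is routine verification: for each of the three families one exhibits an alternating 3-braid diagram (after a single Markov destabilization in the pretzel case (iii)) and checks that the MFW bound $\tfrac{E-e}{2}+1$ on braid index, computed from the $v$-spread of the HOMFLY polynomial, equals 3. So the content is the ``only if'' direction.

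Start with a reduced alternating diagram $D$ of $L$; by the Kauffman-Murasugi-Thistlethwaite theorem $D$ realizes the crossing number. The principal object of study is the Seifert graph $\Gamma(D)$, the connected bipartite plane graph whose vertices are the Seifert circles of $D$ and whose edges correspond to crossings. Yamada's theorem gives $b(L)\leq \min_{D'} s(D')$, where the minimum is over all diagrams of $L$ and $s(D')$ is the number of Seifert circles, and MFW supplies the matching lower bound. Thus the assumption $b(L)=3$ translates into the combinatorial statement that $\Gamma(D)$ can be reduced to exactly three Seifert circles by Yamada / Murasugi-Przytycki moves, but no further.

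Next I would enumerate the possible topologies of $\Gamma(D)$ subject to this constraint. If $\Gamma(D)$ has a cut vertex, $L$ decomposes as a connected sum, and iterating this together with the additivity of braid index under connected sum forces the summands to be $(2,k)$-torus links with parallel orientation, landing in case (i). If $\Gamma(D)$ is 2-connected, I would use planarity and bipartiteness, together with $\min s = 3$, to show that either $\Gamma(D)$ is a path on three vertices — making $D$ already a closed alternating 3-braid and yielding case (ii) — or $\Gamma(D)$ has a very restricted 4-vertex shape that collapses to a 3-vertex path by exactly one Yamada move. An explicit check identifies this residual 4-vertex configuration, together with the required planar embedding carrying an alternating weight, as precisely the pretzel $P(1,p,q,r)$ of case (iii). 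The flyping theorem is used here to transfer structural properties of $\Gamma(D)$ from a chosen $D$ to the link type itself.

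The main obstacle I anticipate is the rigidity of the 4-vertex analysis: I need to show that the pretzel family $P(1,p,q,r)$ is the \emph{only} 2-connected alternating Seifert graph with $s(D)\geq 4$ that admits a destabilization to $s=3$, and that no Seifert graph with $s(D)\geq 5$ can be reduced that far while remaining consistent with the MFW lower bound. Carrying this out requires a careful planar-bipartite case analysis combined with the sharpness of MFW on alternating links (via Murasugi-Przytycki's Seifert-graph index), and I expect this enumeration — rather than either of the tools invoked — to be the technical heart of the argument.
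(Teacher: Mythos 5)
First, note that the paper does not prove this statement at all: it is quoted from Stoimenow \cite[Theorem 4]{Sto} (the classification also goes back to Murasugi \cite{Mur2}), so there is no in-paper argument to compare yours against; your sketch must stand on its own, and it has two genuine gaps. The central one is the step where you ``translate'' $b(L)=3$ into the statement that the Seifert graph of the reduced alternating diagram $D$ can be reduced to exactly three Seifert circles by Yamada/Murasugi--Przytycki moves. Yamada's theorem only guarantees that \emph{some} diagram of $L$ has three Seifert circles; it says nothing about that diagram being reachable from $D$ by reduction moves, and the assertion that the braid index of an alternating link is computed from the Seifert graph of its reduced alternating diagram (equivalently, that the Morton--Franks--Williams bound is sharp there) is precisely the Murasugi--Przytycki conjecture, not an available tool. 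Your entire enumeration is conditioned on this unproved reduction.

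Second, the case division itself is faulty. ``The Seifert graph has a cut vertex $\Rightarrow$ $L$ is a connected sum'' is false: the reduced alternating diagram of the figure-eight knot, the closure of $(\sigma_2\sigma_1^{-1})^2$, has Seifert graph a path on three vertices whose middle vertex is a cut vertex, yet the knot is prime. (A cut vertex of the Seifert graph detects a Murasugi sum decomposition of the canonical Seifert surface, not a connected sum of the link; the distinction lives exactly in the sign and embedding data your case analysis discards.) Relatedly, a path on three vertices is never $2$-connected, so your dichotomy ``cut vertex $\Rightarrow$ case (i); $2$-connected $\Rightarrow$ path on three vertices or a rigid four-vertex graph'' is not even internally consistent, and as stated it would misfile every prime alternating $3$-braid link of case (ii) into case (i). A workable route runs in the opposite direction: since $b(L)=3$, the link $L$ \emph{is} the closure of some $3$-braid, and one classifies which closed $3$-braids are alternating using normal forms for $3$-braids together with the Kauffman--Murasugi--Thistlethwaite crossing-number results --- which is essentially the strategy of the cited source.
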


In this paper, we prove the following.

\begin{theorem}\label{main-thm1}
For each pair $i, j$ of odd integers $\geq 3$, let $K_i$ and $K_j$ denote the $(2, i)$- and $(2,j)$-torus knot, respectively, and let $K_{i,j}=K_i \sharp K_j$, the connected sum of $K_i$ and $K_j$, which is an alternating knot of braid index $3$. For any integer $m$, let $g_c(W_{\pm}(K_{i,j},m))$ denote the canonical genus of the $m$-twisted positive/negative Whitehead double $W_{\pm}(K_{i,j},m)$ of $K_{i,j}$. Then
\begin{equation*}
g_c(W_{\pm}(K_{i,j},m))=i+j-1=c(K_{i,j})-1.
\end{equation*}
\end{theorem}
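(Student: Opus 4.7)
My strategy for proving $g_c(W_{\pm}(K_{i,j},m)) = i+j-1$ is to establish matching upper and lower bounds: an upper bound of $i+j-1$ from an explicit Seifert surface produced by Seifert's algorithm, and a lower bound of $i+j-1$ from Morton's inequality applied to the HOMFLYPT polynomial.

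For the upper bound, I would represent $K_{i,j}$ as the closure of the alternating $3$-braid $\sigma_1^{i}\sigma_2^{-j}$ and build an explicit diagram $D$ of $W_{\pm}(K_{i,j},m)$ by taking the $2$-parallel (anti-parallel along the Whitehead pattern) of this braid closure and inserting the clasp with $m$ twists at a location straddling the connect-sum site. Applying Seifert's algorithm to $D$ and tallying the crossings $c(D)$, Seifert circles $s(D)$, and components $\mu(D)=1$, I would verify via $g = \tfrac{1}{2}(c(D)-s(D)+2-\mu(D))$ that the resulting Seifert surface has genus exactly $i+j-1$. The geometric point is that placing the clasp in the bridge region between the two torus-knot factors forces two of the Seifert circles that would otherwise appear in the Tripp-style construction for $W_{\pm}(T(2,i),m)$ and $W_{\pm}(T(2,j),m)$ separately to merge into one, saving exactly one unit of genus.

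For the lower bound, I would invoke Morton's inequality $\max\deg_z P_L(v,z)\le 2g_c(L)+\mu(L)-1$, which for the knot $W_{\pm}(K_{i,j},m)$ reduces to $g_c\geq \tfrac{1}{2}\max\deg_z P$, and then establish $\max\deg_z P_{W_{\pm}(K_{i,j},m)}(v,z)\geq 2(i+j-1)$. Because $W_{\pm}(K_i \sharp K_j,m)$ is \emph{not} the same as $W_{\pm}(K_i,m)\sharp W_{\pm}(K_j,m)$, the multiplicativity of HOMFLYPT under connected sum cannot be invoked directly on the Whitehead double. Instead, I would apply the HOMFLYPT skein relation at the two clasp crossings to expand $P_{W_{\pm}(K_{i,j},m)}$ as a $\mathbb{Z}[v^{\pm 1},z^{\pm 1}]$-linear combination involving the polynomial of the $m$-twisted $2$-cable of $K_{i,j}$ together with simpler pieces, and then exploit multiplicativity $P_{K_i\sharp K_j}=P_{K_i}\cdot P_{K_j}$ at the level of the companion, combined with Tripp's identity $\max\deg_z P_{W_{\pm}(T(2,n),m)}=2n$ and standard identities for cable polynomials, in order to extract the top $z$-degree.

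The main obstacle, as I see it, is precisely the top-$z$-degree computation for the HOMFLYPT polynomial: one has to track a ``degree drop" of exactly $2$ relative to the naive sum $2i+2j$ coming from the two $T(2,n)$ factors individually. This drop is what causes $g_c(W_{\pm}(K_{i,j},m))$ to fall short of $c(K_{i,j})=i+j$ by exactly one, and locating it requires carefully identifying cancellations among the leading terms produced by resolving the clasp via the skein relation. This cancellation is the phenomenon that distinguishes the reducible case $K_{i,j}=K_i\sharp K_j$ from the irreducible alternating classes (torus, $2$-bridge, pretzel, algebraic) for which Conjecture \ref{Nakam-conj-0} was known to hold, and it is exactly why Conjecture \ref{Tripp-conj-0} must be restricted to prime knots.
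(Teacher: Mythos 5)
Your overall architecture coincides with the paper's: the upper bound $g_c(W_\pm(K_{i,j},m))\le i+j-1$ comes from an explicit diagram of the Whitehead double built on the $3$-braid closure (the paper's diagram $D'$ in Fig.~\ref{con-2-1} has $4(i+j)+2$ crossings and $2(i+j)+5$ Seifert circles, giving canonical genus exactly $i+j-1$), and the lower bound comes from $\tfrac12\max\deg_z P_{W_\pm(K_{i,j},m)}\le g_c$ together with the claim $\max\deg_z P_{W_\pm(K_{i,j},m)}=2(i+j)-2$. Your reduction from the clasped double to the $2$-parallel is also legitimate and is exactly Proposition \ref{prop3-cr-nbr-cg-wd}, which gives $M(W_\pm(D,m))=M(W_2(D,m))+1$ when the degree is positive.

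The gap is in how you propose to compute $\max\deg_z P_{W_2(K_i\sharp K_j)}$. You invoke ``multiplicativity $P_{K_i\sharp K_j}=P_{K_i}\cdot P_{K_j}$ at the level of the companion, combined with Tripp's identity and standard identities for cable polynomials,'' but no such identity delivers the top $z$-degree of a $2$-cable of a connected sum from data about the factors: the doubled link $W_2(K_i\sharp K_j)$ is obtained by gluing the two doubled-link tangles along four strands, not by taking a connected sum of $W_2(K_i)$ and $W_2(K_j)$, and satellite invariants of connected sums are governed by the structure of the HOMFLYPT skein of the annulus, which is not multiplicative in the sense you need. Determining that the top degree is $2(i+j)-3$ rather than the naive $2(i+j)-1$ --- the ``degree drop of $2$'' you correctly flag as the crux --- is precisely the content of the paper's Lemma \ref{lem-max-d-lij}, which is proved by induction on $i$ via an eight-branch partial skein tree applied at a crossing inside one torus-link factor of the doubled diagram, each branch being bounded separately by isotopy deformations and Morton's inequality. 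Without an argument of this kind (or an actual substitute computation), your lower bound, and hence the theorem, is not established; as written, the proposal locates the difficulty accurately but does not resolve it.
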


\begin{theorem}\label{main-thm2}
Let $K$ be an alternating knot of braid index $3$, which is not the connected sum of $(2, k)$-torus knot and $(2, k')$-torus knot with $k, k'\geq 3$. Then the crossing number of $K$ coincides with the canonical genus $g_c(W_{\pm}(K,m))$ of its $m$-twisted positive/negative Whitehead double $W_{\pm}(K,m)$ for any integer $m$. That is,
\begin{equation*}
g_c(W_{\pm}(K,m))=c(K).
\end{equation*}
\end{theorem}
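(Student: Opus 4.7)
The plan is to combine Stoimenow's classification (Theorem~\ref{ch-alk-b3}) with the HOMFLYPT-polynomial strategy underlying the work of Tripp \cite{Tri}, Nakamura \cite{Nak}, Brittenham--Jensen \cite{BJ}, Gruber \cite{Gr1} and the authors \cite{Hj}. Any alternating knot $K$ of braid index $3$ falls into one of the classes (i), (ii), (iii) of Theorem~\ref{ch-alk-b3}. Under the hypothesis of Theorem~\ref{main-thm2}, the connected-sum class (i) reduces to a single $(2,k)$-torus knot, since the excluded case is precisely the nontrivial knot-connected-sum $T(2,k)\sharp T(2,k')$ with both $k,k'\ge 3$; this is Tripp's original case. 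The pretzel class (iii) lies inside the alternating pretzel family and so is handled by Brittenham--Jensen \cite{BJ}. The real new content is class (ii), the prime alternating $3$-braid knots.

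For these I would aim to establish the Nakamura identity
\[
\max\deg_z P_{W_\pm(K,m)}(v,z)=2c(K).
\]
Once this is in hand, Morton's inequality gives $c(K)\le g_c(W_\pm(K,m))$, while the Seifert surface produced by Seifert's algorithm from a reduced alternating diagram of $W_\pm(K,m)$ (obtained from a reduced alternating diagram of $K$ by doubling its strands and inserting the clasp and $m$ twists) supplies the matching upper bound $g_c(W_\pm(K,m))\le c(K)$. To establish the $z$-degree identity I write $K=\widehat\beta$ with
\[
\beta=\sigma_1^{a_1}\sigma_2^{-b_1}\sigma_1^{a_2}\sigma_2^{-b_2}\cdots\sigma_1^{a_k}\sigma_2^{-b_k},\qquad a_i,b_i\ge 1,
\]
a reduced alternating $3$-braid, so that $c(K)=\sum_{i=1}^{k}(a_i+b_i)$, and proceed by induction on this total, applying the HOMFLYPT skein relation at a carefully chosen crossing of $W_\pm(K,m)$ at each step. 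Equivalently, and perhaps more cleanly, I would exhibit every prime alternating $3$-braid knot as obtainable from a small family of \emph{seed} knots by iterated local moves of the type developed in \cite{BJ,Hj}, thereby placing the entire class inside a framework already known to satisfy Conjecture~\ref{Nakam-conj-0}; this matches the abstract's claim of a new large class of prime alternating knots.

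The principal obstacle is a no-cancellation phenomenon at the inductive step: after resolving a crossing of $W_\pm(K,m)$ via the skein relation, the leading $z$-coefficients of the HOMFLYPT polynomials of the two smoothed diagrams must combine without cancelling, so that $\max\deg_z$ really drops by exactly $2$ and no more. Controlling this requires tracking not just the $z$-degree but also the leading coefficient, typically as a Laurent polynomial in $v$, which is in turn governed by the Seifert-circle structure of the resolved diagrams. Extending the bookkeeping that works in the highly symmetric $(2,n)$-torus, $2$-bridge, and pretzel settings to a genuine $3$-braid alternating word, with its many possible block patterns, is the crux of the proof; it reduces to a finite combinatorial case analysis of the word $\beta$ together with a leading-coefficient argument for the HOMFLYPT polynomial of the Whitehead double at each inductive step.
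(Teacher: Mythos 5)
Your overall architecture matches the paper's: Stoimenow's classification, Brittenham--Jensen for the pretzel class $P(1,p,q,r)$, a Morton-inequality sandwich for the canonical genus once the $z$-degree identity is known, and, for the alternating $3$-braid class, a reduction to a family of seed diagrams together with iterated crossing-to-full-twist replacements. But there is a genuine gap at the heart of the proposal: you never identify the seed family and never prove the degree identity for it; you explicitly defer this as ``the crux.'' That computation is the actual mathematical content of the theorem. Concretely, the paper conjugates $\beta$ to $\eta=\sigma_2^{-b_1}\sigma_1^{a_2}\cdots\sigma_1^{a_p}\sigma_2^{-b_p}\sigma_1^{a_1}$ and observes that $\hat\eta$ arises from the closed braid $\hat\gamma_p=\widehat{(\sigma_2^{-1}\sigma_1)^p}$ by repeated full-twist insertions; those insertions are then handled entirely by the quoted Proposition~\ref{cr-1} of Brittenham--Jensen, so the ``no-cancellation'' difficulty you flag at the insertion stage is already disposed of by a cited result and requires no new leading-coefficient bookkeeping. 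What remains, and what Sections~\ref{sect-miwd3-plrq} and~\ref{sect-pf-lem1} of the paper are devoted to, is proving $\max\deg_z P_{W_2(\hat\gamma_p)}(v,z)=4p-1$ for every $p\ge 2$ (Lemma~\ref{m-d-1-1}), done by induction on $p$ via a partial skein tree at one tangle of the doubled diagram, with the five upper-bound estimates of Lemma~\ref{m-d-1} needed to show that the subordinate skein nodes can neither cancel nor overtake the leading term. Your characterization of the remaining work as ``a finite combinatorial case analysis of the word $\beta$'' misstates its nature: the seeds form an infinite family indexed by $p$, and each inductive step requires its own isotopy and skein arguments (including orientation-reversal tricks reducing $D_p^6$ and $D_p^7$ to $D_p^4$ and $D_p^5$). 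Without this lemma, or a substitute for it, the induction you sketch has no base and the argument does not close. A minor further point: the canonical diagram $W_\pm(D,m)$ is not a reduced alternating diagram, so the upper bound $g_c(W_\pm(K,m))\le c(K)$ should come from Proposition~\ref{prop2-cr-nbr-cg-wd} (counting Seifert circles of the canonical diagram directly), not from an appeal to alternation.
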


Theorem \ref{main-thm1} shows that Conjecture \ref{Tripp-conj-0} is not true for composite (alternating) knots in general (cf. Remark \ref{conj-rmk-1}). As a conclusion, it is reasonable to propose the following:

\begin{conjecture}\label{jl-conj}
The crossing number of any prime knot coincides with the canonical genus of its Whitehead double.
\end{conjecture}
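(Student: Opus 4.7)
The plan splits into the standard two halves. The upper bound $g_c(W_\pm(K,m))\le c(K)$ is not at issue: given any diagram $D$ of $K$ with $c(D)$ crossings, the obvious Whitehead-double diagram obtained by replacing each strand of $D$ by a $2$-parallel with a single clasp (and $m$ extra full twists) admits a Seifert-algorithm surface of genus exactly $c(D)$, as used implicitly by Tripp, Nakamura, Brittenham--Jensen, and Gruber. Taking $D$ minimal yields $g_c(W_\pm(K,m))\le c(K)$ for every knot, prime or not; no new idea is needed here.

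All the content is in the lower bound $g_c(W_\pm(K,m))\ge c(K)$ for prime $K$. The natural first attempt is Morton's inequality, which reduces the problem to showing
\[
\max\deg_z P_{W_\pm(K,m)}(v,z)=2c(K)
\]
for every prime $K$. For prime \emph{alternating} knots, my plan is to collect all currently known cases (Tripp for $T(2,n)$, Nakamura for $2$-bridge, Brittenham--Jensen for alternating pretzels, Gruber for algebraic alternating, the authors' previous work, and Theorem~\ref{main-thm2} for alternating $3$-braids) and try to extend the Brittenham--Jensen/Jang--Lee tangle-replacement machinery so that it respects the flype moves of Menasco--Thistlethwaite. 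Combined with Menasco's theorem (a prime alternating diagram represents a prime knot), one would hope to reduce an arbitrary prime alternating diagram to rational tangles inserted in small templates, each of which has already been verified. This is plausible but technically heavy: the obstacle is controlling how the $z$-degree of $P_{W_\pm}$ behaves under arbitrary tangle sums, not merely under the restricted sums already handled.

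The genuinely hard part is the prime \emph{non-alternating} case, and I expect it to be the main obstacle. Nakamura showed that for $T(4,3)$ one has $\max\deg_z P_{W_\pm(T(4,3),m)}(v,z)<2c(T(4,3))$, so Morton's inequality is \emph{provably} insufficient to reach the bound, even though Conjecture~\ref{jl-conj} might still hold for $T(4,3)$. Here I would abandon the HOMFLYPT polynomial and look for a geometric or combinatorial lower bound on canonical genus directly. Two routes look promising: (i) use the Kauffman polynomial version of Morton's inequality (Rudolph-type bounds) when it happens to be sharper, and (ii) argue that a canonical Seifert surface $F$ for $W_\pm(K,m)$ necessarily deformation-retracts onto a $1$-complex carrying the pattern of a diagram of the companion $K$, so that an Euler-characteristic count on the Seifert circles of $F$ forces at least $c(K)$ bands once primeness of $K$ is used to exclude nugatory simplifications. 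Before pushing either route, I would verify the conjecture numerically for $T(3,q)$ and small Montesinos primes to rule out easy counterexamples, since a single failure on a prime torus knot would kill Conjecture~\ref{jl-conj} itself and redirect the search toward an even more restricted class.
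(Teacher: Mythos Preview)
The statement you are attempting to prove is Conjecture~\ref{jl-conj}, and in the paper it is explicitly a \emph{conjecture}, not a theorem: there is no proof of it anywhere in the paper, and the authors propose it precisely because Theorem~\ref{main-thm1} shows the original Tripp conjecture fails for composite knots. So there is nothing to compare your proposal against; the paper establishes only the special cases recorded in Theorems~\ref{main-thm2} and~\ref{main-thm3} (prime alternating knots of braid index~$3$, and the classes $\overline{\mathcal K}_p$), and leaves Conjecture~\ref{jl-conj} open.

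Your write-up is accordingly not a proof but a research outline, and you are candid about this. The upper bound $g_c(W_\pm(K,m))\le c(K)$ is indeed Proposition~\ref{prop1-cr-nbr-cg-wd} and requires no primeness. For the lower bound you correctly note that the Morton-inequality route is \emph{provably} blocked in the non-alternating case by Nakamura's $T(4,3)$ example, so any argument must go beyond HOMFLYPT degrees there. Your two suggested alternatives---Kauffman-polynomial bounds and a direct Euler-characteristic/deformation-retract argument on canonical surfaces---are reasonable directions to explore, but neither is carried far enough to constitute even a sketch: you give no mechanism by which primeness of $K$ would force $c(K)$ bands in an arbitrary canonical surface for $W_\pm(K,m)$, and the Kauffman bound has the same structural limitation as Morton's. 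Even in the prime alternating case your plan to ``extend the tangle-replacement machinery so that it respects flypes'' is exactly the open problem the paper's partial results gesture at; the paper does not know how to do this either. In short, there is no gap to name because there is no proof here---what you have written is an honest assessment of why Conjecture~\ref{jl-conj} is open.
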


Furthermore, Lemma \ref{lem-max-d-lij} in Section \ref{sect-1-braid} below shows that Conjecture \ref{Nakam-conj-0} is also not true for composite alternating knots in general (cf. Remark \ref{conj-rmk-1}). Hence we have

\begin{conjecture}\label{jl-conj-2}
For any prime alternating knot $K$ of crossing number $c(K)$, we have $\max\deg_z P_{W_\pm(K,m)}(v,z)=2c(K)$. Therefore the canonical genus of a Whitehead double of $K$ is equal to $c(K)$.
\end{conjecture}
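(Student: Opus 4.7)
The plan is to establish the $z$-degree identity $\max\deg_z P_{W_\pm(K,m)}(v,z)=2c(K)$ for every prime alternating knot $K$; the canonical genus statement then follows by combining Morton's inequality, which gives $g_c(W_\pm(K,m))\geq \tfrac{1}{2}\max\deg_z P_{W_\pm(K,m)}(v,z)=c(K)$, with the Seifert surface of genus $c(K)$ obtained by applying the Seifert algorithm to the standard Whitehead double diagram of any reduced alternating diagram of $K$.

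I would fix a reduced alternating diagram $D$ of $K$; by the Kauffman--Murasugi--Thistlethwaite theorem $D$ realizes $c(K)$, and by the Menasco--Thistlethwaite flyping theorem it is unique up to flypes. Doubling $D$ along the blackboard framing corrected by $m$ twists and inserting the clasp gives the standard Whitehead double diagram $W_\pm(D,m)$. Over each crossing $x$ of $D$ this diagram contains a four-strand tangle with four crossings in an alternating pattern. Iterating the HOMFLYPT skein relation inside each such box splits its contribution into a \emph{full} resolution that retains two crossings on a diagonal and contributes a factor of $z^2$, together with lower $z$-order remainder terms. A global state sum indexed over the $c(K)$ boxes then expresses $P_{W_\pm(K,m)}(v,z)$ as a sum of monomials, and the unique state that is full at every box contributes $z^{2c(K)}$ with coefficient equal, up to a power of $v$, to the HOMFLYPT polynomial of the trivial link read off from the all-full resolution.

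The main obstacle is to show that no cancellation occurs between this leading term and the contributions of the remaining $2^{c(K)}-1$ states once lower-order pieces are collected. My plan has two parts. First, I would verify that the top $z$-coefficient of $P_{W_\pm(K,m)}$ is invariant under any flype of $D$ by a local HOMFLYPT computation on the six-strand box obtained by doubling the two tangles involved in the flype; this reduces the question to a single representative of each flype-orbit. Second, I would induct on the number of Conway circles in a maximal Conway-sphere decomposition of $D$, with base cases furnished by the $2$-bridge knots (Nakamura), the algebraic alternating knots in Conway's sense (Gruber), and the alternating braid-index-$3$ knots proved in Theorem \ref{main-thm2} of this paper. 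When $K$ admits a non-trivial Conway decomposition compatible with its primality, a Murasugi-sum decomposition of the canonical Seifert surface of $W_\pm(D,m)$ along the corresponding sphere is expected to factor the leading $z$-coefficient as a product of leading coefficients of smaller Whitehead doubles, to which the inductive hypothesis applies. Proving that this multiplicativity holds on the nose, without hidden cancellation between blocks, is the hard part, and amounts to a tangle-calculus extension of the Brittenham--Jensen building-block construction and of the computations underlying Theorems \ref{main-thm1} and \ref{main-thm2}.
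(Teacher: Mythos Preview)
The statement you are attempting to prove is \emph{Conjecture~\ref{jl-conj-2}} in the paper, not a theorem; the paper offers no proof of it. What the paper does is accumulate evidence: Theorem~\ref{main-thm2} verifies it for prime alternating $3$-braid knots, Theorem~\ref{main-thm3} for the class $\overline{\mathcal K}^2$, and the earlier references cover $2$-bridge, algebraic alternating, and alternating pretzel knots. The conjecture remains open in general, so there is no ``paper's own proof'' to compare against.

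Your proposal is an outline, not a proof, and you correctly identify the hard step yourself. Two concrete gaps make the scheme, as written, incomplete. First, your induction on the number of Conway circles needs base cases that cover \emph{every} prime alternating knot with trivial Conway decomposition, but the classes you list ($2$-bridge, algebraic alternating, braid index~$3$) do not exhaust these: there are prime alternating knots built on basic Conway polyhedra such as $6^*$, $8^*$, $9^*,\ldots$ that fall into none of your base cases, so the induction never starts for them. Second, the Murasugi-sum factorisation of the top $z$-coefficient that you invoke is not a known theorem for HOMFLYPT; the Brittenham--Jensen machinery (Propositions~\ref{BJprop2-cr-nbr-cg-wd} and~\ref{cr-1}) controls only the very special operation of inserting extra half-twists at a single crossing, not a general tangle sum, and the computations in this paper (Lemma~\ref{m-d-1}, Lemma~\ref{lem-max-d-lij}) show precisely how delicate the non-cancellation argument is even in those restricted settings. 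Until both points are settled, the proposal remains a strategy rather than a proof.
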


It is worth pointing out that Conjectures \ref{jl-conj} and \ref{jl-conj-2} are both true for prime alternating knots lie in the four special classes mentioned above. Additionally, the following theorem \ref{main-thm3} supplies a larger class of (prime) alternating knots than the class of all (prime) alternating knots with braid index $3$, for which Conjecture \ref{jl-conj-2} (and consequently Conjecture \ref{jl-conj}) holds.

\begin{theorem}\label{main-thm3}
Let $\gamma_p=(\sigma_2^{\epsilon}\sigma_{1}^{-\epsilon})^p,~\epsilon=\pm 1, p \geq 2,$ be an alternating $3$-braid and let $\overline{\mathcal K}_p$ be the class consisting of the alternating knot $\hat\gamma_p$ itself (if it is a knot)
and all alternating knots having diagrams which can be obtained from the diagram of the closed braid $\hat\gamma_p$ as shown in Fig.~\ref{fig-gamma-p} by repeatedly replacing a crossing by a full twist.
Then for every $K \in \overline{\mathcal K}_p$ and every integer $m$, \begin{equation}\label{eqn0-thm-2}
\max\deg_z P_{W_\pm(K,m)}(v,z)=2c(K),
\end{equation}
and therefore $$g_c(W_\pm(K,m))=c(K).$$
\end{theorem}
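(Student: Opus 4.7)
The upper bound is standard. From a reduced alternating diagram of $K$ with $c(K)$ crossings, one draws a natural diagram of $W_\pm(K,m)$ whose Seifert surface produced by Seifert's algorithm realizes genus at most $c(K)$, so $g_c(W_\pm(K,m))\leq c(K)$. Combining with Morton's inequality $\max\deg_z P_L(v,z)\leq 2g_c(L)$, it remains only to prove the reverse inequality
$$\max\deg_z P_{W_\pm(K,m)}(v,z)\geq 2c(K),$$
since both inequalities will then be equalities and the genus statement follows at once.

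The plan is to proceed by induction on the number $N$ of full-twist replacements needed to manufacture $K$ from $\hat\gamma_p$. For the base case $N=0$, namely $K=\hat\gamma_p$ (when this is a knot), the alternating closed-braid diagram has $2p$ crossings, so $c(\hat\gamma_p)=2p$ by the Kauffman-Murasugi-Thistlethwaite theorem. I would compute $\max\deg_z P_{W_\pm(\hat\gamma_p,m)}(v,z)=4p$ by a HOMFLYPT skein-tree computation exploiting the periodic structure of $(\sigma_2^\epsilon\sigma_1^{-\epsilon})^p$, with a secondary induction on $p$ starting from $p=2$ (the figure-eight-type case), and arguing separately about the two Whitehead-double clasp signs. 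For the inductive step, suppose $K'\in\overline{\mathcal K}_p$ satisfies (\ref{eqn0-thm-2}) and let $K''$ be obtained from $K'$ by replacing one crossing with a full twist of matching sign, so that $K''$ is alternating with $c(K'')=c(K')+2$. The Brittenham-Jensen building-up procedure \cite{BJ}, refined in the authors' \cite{Hj}, shows that such a replacement raises $\max\deg_z P_{W_\pm(\cdot,m)}(v,z)$ by exactly $4$; this is a local HOMFLYPT skein computation at the two new crossings which, combined with the inductive hypothesis, completes the induction.

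The main obstacle is the base case. Because $(\sigma_2^\epsilon\sigma_1^{-\epsilon})^p$ mixes braid generators of opposite sign, the skein tree on $W_\pm(\hat\gamma_p,m)$ has delicate cancellations that must be tracked to guarantee that the top $z$-term survives; by contrast, the classes previously handled (torus, $2$-bridge, algebraic alternating, and pretzel) admit purely positive diagrams or simpler recursions in which leading terms are manifestly non-cancelling. I would address this either by a bookkeeping argument that isolates the top $z$-term throughout the secondary induction on $p$, or, more conceptually, by an $H_3$ Hecke algebra / reduced Burau computation that identifies the leading $z$-behavior uniformly in $p$, $\epsilon$, and the Whitehead-double sign. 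Once the base case is established, the primary induction on $N$ proceeds by the now-standard full-twist machinery and yields (\ref{eqn0-thm-2}) for every $K\in\overline{\mathcal K}_p$.
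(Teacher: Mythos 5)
Your overall architecture matches the paper's: Morton's inequality gives the upper bound $\tfrac12\max\deg_z P_{W_\pm(K,m)}(v,z)\leq g_c(W_\pm(K,m))\leq c(K)$, and the lower bound is obtained by induction on the number of full-twist replacements, with base case $\hat\gamma_p$. But there is a genuine gap: the base case, which you yourself flag as ``the main obstacle,'' is not proved --- you only name two possible strategies (a bookkeeping skein-tree induction on $p$, or a Hecke-algebra computation) without carrying either out. That computation is essentially the entire content of the paper's argument: it occupies Lemma \ref{m-d-1-1} together with the technical Lemma \ref{m-d-1}, whose proof is all of Section \ref{sect-pf-lem1} and requires a carefully chosen partial skein tree (Fig.~\ref{tangle2}) at one tangle of the doubled diagram, with each of the eight resulting diagrams bounded separately by further skein trees, isotopies, and Morton estimates. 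Precisely because $\gamma_p$ mixes generators of opposite sign, there is no shortcut here, and a proposal that defers this step has not proved the theorem.

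Two further points where your plan diverges from what actually works. First, the induction should be run on the doubled link $W_2(D)$ rather than on $W_\pm(K,m)$ itself: the degree-raising machinery you invoke (Brittenham--Jensen's Proposition \ref{cr-1}) is a statement about $W_2$, and $W_2(\hat\gamma_p)$ is independent of $m$ and of the clasp sign; the paper only passes to $W_\pm(K,m)$ at the very end via Proposition \ref{prop3-cr-nbr-cg-wd}, which converts $\max\deg_z P_{W_2(D)}=2c(K)-1$ into $\max\deg_z P_{W_\pm(K,m)}=2c(K)$ for all $m$ at once. Your plan to handle $m$ and both clasp signs inside the skein induction adds avoidable complexity. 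Second, your inductive step has the arithmetic wrong: replacing one crossing by a full twist (two half-twists) increases the crossing number by $1$, not $2$, and raises $\max\deg_z P_{W_2(\cdot)}$ by exactly $2$, not $4$ (you appear to be describing the three-half-twist operation of Proposition \ref{BJprop2-cr-nbr-cg-wd} instead of the full-twist operation that defines $\overline{\mathcal K}_p$). One must also check, as the paper does, that at every stage the diagram remains a reduced alternating diagram of a non-split link, so that the hypothesis $c(D')=c(L')$ of Proposition \ref{cr-1} continues to hold.
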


In \cite{Hj}, the authors gave a family $\mathcal K^3=\bigcup_{p=1}^\infty\mathcal K_p$ of alternating knots, where $\mathcal K_1$ contains all $(2,n)$-torus knots, $2$-bridge knots and alternating pretzel knots and $\mathcal K_i \not=\mathcal K_j$ if $i\not= j$, and showed that the crossing number of any alternating knot in $\mathcal K^3$ coincides with the canonical genus of its Whitehead double. This leads that Conjectures \ref{jl-conj} and \ref{jl-conj-2} hold for the infinite family $\mathcal K^3_{\rm prime}$ of all prime alternating knots in $\mathcal K^3$.

We remark that Theorem \ref{main-thm3} gives an infinite sequence
\[\overline{\mathcal K}_2, \overline{\mathcal K}_3, \ldots, \overline{\mathcal K}_p, \ldots\]
of infinite families $\overline{\mathcal K}_p$ of (prime) alternating knots satisfying Conjecture \ref{jl-conj-2} and therefore Conjecture \ref{jl-conj}. We define
\[\overline{\mathcal K}^2=\bigcup_{p=2}^\infty\overline{\mathcal K}_p.\]
Then the infinite family $\overline{\mathcal K}^2_{\rm prime}$ of all prime alternating knots in $\overline{\mathcal K}^2$ is a new family that supports Conjectures \ref{jl-conj} and \ref{jl-conj-2}, including all prime alternating knots with braid index $3$, and also containing infinitely many prime alternating knots with braid index $> 3$ (see Example \ref{example}). Therefore Conjectures \ref{jl-conj} and \ref{jl-conj-2} hold for all prime alternating knots that belong to the family $\mathcal K^{32}_{\rm prime}:=\mathcal K^3_{\rm prime}$ $\cup$ $\overline{\mathcal K}^2_{\rm prime}$. We also note that $\mathcal K^{32}_{\rm prime}$ provides a partial affirmative answer to the conjecture given by Brittenham and Jensen in the last section 4 of the paper \cite{BJ}, which states that if $K$ is a nontrivial prime alternating knot, then
$\max\deg_z P_{W_\pm(K,m)}(v,z)=2c(K),$
and thus $g_c(W_\pm(K,m))=c(K).$
It is remarkable from Proposition \ref{BJprop2-cr-nbr-cg-wd} below that
if $K'$ is a knot belong to $\mathcal K^{32}_{\rm prime}$ and if for a $c(K')$-minimizing diagram $D'$ for $K'$ we replace a crossing of $D'$, thought of as a half-twist, with three half-twists as shown in Fig.~\ref{fig:3-half twists}, producing a new alternating knot $K$, then we also have $\max\deg_z P_{W_\pm(K,m)}(v,z)=2c(K),$ and therefore $g_c(W_\pm(K,m))=c(K)$.

 \bigskip

The rest of this paper is organized as follows.
Section 2 consists of definitions and terminologies which are used throughout this paper. Indeed, we review the Morton's inequality for the maximum degree in $z$ of the HOMFLYPT polynomial $P_L(v,z)$ of a link $L$ and its  relation to the canonical genus of Whitehead double of a knot. We also give a brief review of Brittenham and Jensen's results from \cite{BJ,Hj}. In Section 3, we prove Theorem \ref{main-thm1}.
In Section 4, we prove that for all integers $p \geq 2$, the maximum degree in $z$ of the HOMFLYPT polynomial $P_{W_2(\hat\gamma_p)}(v,z)$ of the doubled link $W_2(\hat\gamma_p)$ of the closure $\hat\gamma_p$ of an alternating $3$-braid $\gamma_p=(\sigma_2^{\epsilon}\sigma_{1}^{-\epsilon})^p,~\epsilon=\pm 1, p \geq 2,$ is equal to $2c(\hat\gamma_p)-1=4p-1$ (Theorem \ref{main-cor-1}). Using this result and Brittenham and Jensen's results, we prove Theorem \ref{main-thm2} and Theorem \ref{main-thm3} in Section 5 and discuss examples. 
 The final section 6 is devoted to prove a key lemma \ref{m-d-1}, which has an essential role to prove Theorem \ref{main-cor-1}.

\section{Terminologies and notations}\label{sect-wd}

Let $D$ be an oriented diagram of an oriented knot $K$ and let $w(D)$
denote the writhe of $D$, that is, the sum of the signs of all
crossings in $D$ defined by
$\mathrm{sign} \left( \xy (5,2.5);(0,-2.5) **@{-} ?<*\dir{<},
(5,-2.5);(3,-0.5) **@{-}, (0,2.5);(2,0.5) **@{-} ?<*\dir{<},
\endxy \right) = 1$ and $\mathrm{sign} \left( \xy (0,2.5);(5,-2.5)
**@{-} ?<*\dir{<}, (0,-2.5);(2,-0.5) **@{-}, (5,2.5);(3,0.5)
**@{-} ?<*\dir{<}, \endxy \right) = -1$.
Recall that for an oriented diagram $D = D_1 \cup D_2$ of an oriented
two component link $L = K_1 \cup K_2$, the {\it linking number}
$lk(L)$ of $L$ is defined to be the half of the sum of the signs of all
crossings between $D_1$ and $D_2$.

Let $T$ be a knot embedded in the unknotted solid torus $V=S^1 \times D^2$, which is essential in the sense that it meets every meridional disc in the solid torus $V$. Let $K$ be an arbitrary given knot in $S^3$
and let $N(K)$ be a tubular neighborhood of $K$ in $S^3$.
Suppose that $h : V=S^1 \times D^2 \to N(K)$ is a homeomorphism,
then the image $h(T)=S_T(K)$ is a new knot in $S^{3}$, which is called a
{\it satellite (knot)} with the {\it companion} $K$ and {\it pattern} $T$, and denoted by $S_T(K)$.
Note that if $K$ is a non-trivial knot, then $S_T(K)$ is also a non-trivial knot \cite{BZ}.

Now let $W_+$, $W_-$ and $U$ denote the positive Whitehead-clasp,
negative Whitehead-clasp and the doubled link embedded in $V$
with orientations as shown in Fig.~ \ref{Whitehead-clasp}.

\begin{figure}[ht]
\begin{center}
\resizebox{0.60\textwidth}{!}{%
  \includegraphics{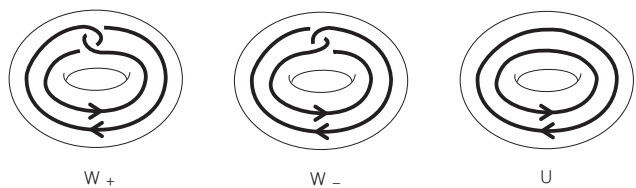} }
\caption{Whitehead-clasp}
\label{Whitehead-clasp}
\end{center}
\end{figure}

Let $K$ be an oriented knot and let $h : V=S^1 \times D^2 \to N(K)$ be an orientation preserving homeomorphism which takes
the disk $\{\mathbf{1}\} \times D^2$ to a meridian disk of $N(K)$,
and the core $S^1 \times \{\mathbf{0}\}$ of $V$ onto the knot $K$.
Let $\ell$ be the preferred longitude of $V$. We choose an orientation for the image $h(\ell)$ so that it is parallel to $K$.
If the linking number of $h(\ell)$ and $K$ is equal to $m$,
then the satellite $S_{W_+}(K)$ (respectively  $S_{W_-}(K)$)
with the companion $K$ and pattern $W_+$ (respectively  $W_-$) is called the {\it $m$-twisted positive} (respectively  {\it negative}) {\it Whitehead
double} of $K$, denoted by $W_+(K, m)$(respectively  $W_-(K, m)$), and the satellite $S_U(K)$ with the companion $K$ and pattern $U$ is called
the {\it $m$-twisted doubled link} of $K$, denoted by $W_2(K,m)$.
The $0$-twisted positive (respectively  negative) Whitehead double of $K$ is
sometimes called the {\it untwisted} positive (respectively  negative)
Whitehead double of $K$.
In what follows, we use the notation $W_\pm(K, m)$ to refer to the
$m$-twisted positive/negative Whitehead double of $K$.

\begin{figure}
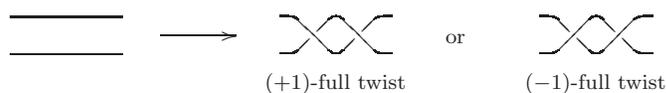

\centerline{ \xy (0,10);(15,10) **@{-},
(0,15);(15,15) **@{-} ,
(20,12.5);(30,12.5) **@{-} ?>*\dir{>},
(7.5,5)*{_{}},
\endxy
\quad
\xy (0,10);(1,10) **@{-},
(0,15);(1,15) **@{-}, (2.5,10.5);
(4,12) **@{-}, (5,13);(6.5,14.5) **@{-}, (2.5,14.5);(6.5,10.5) **@{-},
(8.5,10.5);(10,12) **@{-}, (11,13);(12.5,14.5) **@{-},
(8.5,14.5);(12.5,10.5) **@{-}, (1,10);(2.5,10.5) **\crv{(2,10)},
(1,15);(2.5,14.5) **\crv{(2,15)}, (6.5,14.5);(8.5,14.5)
**\crv{(7.5,15.5)}, (6.5,10.5);(8.5,10.5) **\crv{(7.5,9.5)},
(14,10);(12.5,10.5) **\crv{(13,10)}, (14,15);(12.5,14.5)
**\crv{(13,15)}, (14,10);(15,10) **@{-}, (14,15);(15,15) **@{-},
(7.5,6)*{_\text{$(+1)$-full twist}}, (23.5,12)*{_{\rm or}},
\endxy
\qquad
\xy (0,10);(1,10) **@{-},
(0,15);(1,15) **@{-}, (2.5,14.5);
(4,13) **@{-}, (5,12);(6.5,10.5) **@{-}, (2.5,10.5);(6.5,14.5) **@{-},
(8.5,14.5);(10,13) **@{-}, (11,12);(12.5,10.5) **@{-},
(8.5,10.5);(12.5,14.5) **@{-}, (1,10);(2.5,10.5) **\crv{(2,10)},
(1,15);(2.5,14.5) **\crv{(2,15)}, (6.5,14.5);(8.5,14.5)
**\crv{(7.5,15.5)}, (6.5,10.5);(8.5,10.5) **\crv{(7.5,9.5)},
(14,10);(12.5,10.5) **\crv{(13,10)}, (14,15);(12.5,14.5)
**\crv{(13,15)}, (14,10);(15,10) **@{-}, (14,15);(15,15) **@{-},
(7.5,6)*{_\text{$(-1)$-full twist}},
\endxy}
\vspace*{0pt}\caption{($\pm$)-full twist}\label{fig-full-twist} 
\end{figure}

The $m$-twisted positive (respectively  negative) Whitehead double $W_+(K,m)$ (respectively  $W_-(K,m)$) has the {\it canonical diagram}, denoted by  $W_+(D,m)$
(respectively  $W_-(D,m)$), associated with a diagram $D$ of $K$,
which is the doubled link diagram of $D$ with $(m-w(D))$
full-twists (see Fig.~\ref{fig-full-twist}) and a positive Whitehead-clasp
$W_+$ (respectively  negative Whitehead-clasp $W_-$) as illustrated in
(b) and (c) of Fig.~\ref{fig-ex-canon-diag-trefoil}. Also, the $m$-twisted doubled link
$W_2(K,m)$ of $K$ has the canonical diagram $W_2(D,m)$ associated with $D$,
which is the doubled link diagram of $D$ with $(m-w(D))$ full-twists
without Whitehead-clasp.
In particular, the canonical diagram $W_+(D,w(D))$ (respectively $W_-(D,w(D))$) of the $w(D)$-twisted positive (respectively  negative) Whitehead double $W_+(K,w(D))$ (respectively  $W_-(K,w(D))$) is called the
{\it standard diagram} of Whitehead double of $K$ associated with the diagram $D$ and is denoted by simply $W_+(D)$ (respectively  $W_-(D)$). Likewise, the canonical diagram $W_2(D,w(D))$ of the $w(D)$-twisted doubled link $W_2(K,w(D))$ is called the
{\it standard diagram} of the doubled link of $K$ associated with the diagram $D$ and is denoted by simply $W_2(D)$ (For example, see Fig. \ref{fig-ex-canon-diag-trefoil} (d)).

\begin{figure}[ht]
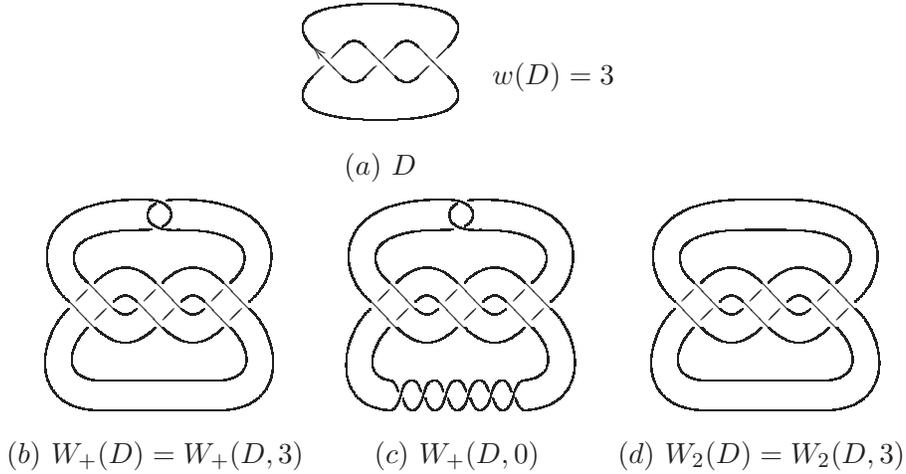

\centerline{\xy (32,28)*{(a)~D}, (55,40)*{w(D)=3},
(23,44);(27,40)
**@{-}, ?<*\dir{<}, (23,40);(24.5,41.5) **@{-},
(25.5,42.5);(27,44) **@{-},
(30,44);(34,40) **@{-}, (30,40);(31.5,41.5) **@{-},
(32.5,42.5);(34,44) **@{-},
(37,44);(41,40) **@{-},  (37,40);(38.5,41.5) **@{-},
(39.5,42.5);(41,44) **@{-},
(27,44);(30,44) **\crv{(28.5,45.5)}, (27,40);(30,40)
**\crv{(28.5,38.5)}, (34,44);(37,44) **\crv{(35.5,45.5)},
(34,40);(37,40) **\crv{(35.5,38.5)}, (23,44);(41,44)
**\crv{(21,46)&(21,50)&(43,50)&(43,46)}, (23,40);(41,40)
**\crv{(21,38)&(21,34)&(43,34)&(43,38)},
\endxy}
\vskip 0.2cm
\centerline{\xy (33,22)*{(b)~{W_+(D)=W_+(D, 3)}},
 (22.5,42.5);(24,44) **@{-},
(25,40);(26.5,41.5) **@{-}, (21.5,42.5);(25,39) **@{-},
(24,45);(27.5,41.5) **@{-},
(27.5,42.5);(30.5,42.5) **\crv{(29,44)}, (27.5,41.5);(30.5,41.5)
**\crv{(29,40)}, (25,45);(33,45) **\crv{(29,49)}, (25,39);(33,39)
**\crv{(29,35)},
(31.5,42.5);(33,44) **@{-}, (34,40);(35.5,41.5) **@{-},
(30.5,42.5);(34,39) **@{-}, (33,45);(36.5,41.5) **@{-},
(36.5,42.5);(39.5,42.5) **\crv{(38,44)}, (36.5,41.5);(39.5,41.5)
**\crv{(38,40)}, (34,45);(42,45) **\crv{(38,49)}, (34,39);(42,39)
**\crv{(38,35)},
(40.5,42.5);(42,44) **@{-}, (43,40);(44.5,41.5) **@{-},
(39.5,42.5);(43,39) **@{-}, (42,45);(45.5,41.5) **@{-},
(31,56);(34,52.5) **\crv{(36.2,56.2)&(35.5,53)},
(32.5,52.1);(31,52) **\crv{(32,52)}, (36,52);(33,55.5)
**\crv{(31,51.8)&(31.5,54.8)}, (34.5,55.9);(36,56)
**\crv{(35,56)},
(31,56);(21.5,42.5) **\crv{(16,56.2)&(17,46)}, (31,52);(24,45)
**\crv{(20.5,52)&(21,47.5)}, (36,56);(45.5,42.5)
**\crv{(51,56.2)&(50,46)}, (36,52);(43,45)
**\crv{(46.5,52)&(46,47.5)},
(25,28);(21.5,41.5) **\crv{(16,28.5)&(17.5,38)}, (25,32);(24,39) **\crv{(20.5,32.5)&(21.5,36.5)},
(42,28);(45.5,41.5) **\crv{(51,28.5)&(49.5,38)}, (42,32);(43,39) **\crv{(46.5,32.5)&(45.5,36.5)},
(25,28);(36,28) **@{-}, (25,32);(36,32) **@{-}, (36,28);(42,28)
**@{-}, (36,32);(42,32) **@{-},
\endxy
  \quad
\xy (33,22)*{(c)~{W_+(D, 0)}},
(22.5,42.5);(24,44)
**@{-}, (25,40);(26.5,41.5) **@{-}, (21.5,42.5);(25,39) **@{-},
(24,45);(27.5,41.5) **@{-},
(27.5,42.5);(30.5,42.5) **\crv{(29,44)},
(27.5,41.5);(30.5,41.5)
**\crv{(29,40)}, (25,45);(33,45) **\crv{(29,49)}, (25,39);(33,39)
**\crv{(29,35)},
(31.5,42.5);(33,44) **@{-}, (34,40);(35.5,41.5) **@{-},
(30.5,42.5);(34,39) **@{-}, (33,45);(36.5,41.5) **@{-},
(36.5,42.5);(39.5,42.5) **\crv{(38,44)},
(36.5,41.5);(39.5,41.5)
**\crv{(38,40)}, (34,45);(42,45) **\crv{(38,49)}, (34,39);(42,39)
**\crv{(38,35)},
(40.5,42.5);(42,44) **@{-}, (43,40);(44.5,41.5) **@{-},
(39.5,42.5);(43,39) **@{-}, (42,45);(45.5,41.5) **@{-},
(31,56);(34,52.5) **\crv{(36.2,56.2)&(35.5,53)},
(32.5,52.1);(31,52) **\crv{(32,52)}, (36,52);(33,55.5)**\crv{(31,51.8)&(31.5,54.8)},
(34.5,55.9);(36,56) **\crv{(35,56)},
(31,56);(21.5,42.5) **\crv{(16,56.2)&(17,46)}, (31,52);(24,45)
**\crv{(20.5,52)&(21,47.5)}, (36,56);(45.5,42.5)
**\crv{(51,56.2)&(50,46)}, (36,52);(43,45)
**\crv{(46.5,52)&(46,47.5)},
(24,28);(21.5,41.5) **\crv{(16,28.5)&(17.5,38)}, (24,32);(24,39)
**\crv{(20.5,32.5)&(21.5,36.5)}, (42,28);(45.5,41.5)
**\crv{(51,28.5)&(49.5,38)}, (42,32);(43,39)
**\crv{(46.5,32.5)&(45.5,36.5)},
(24,28);(25.5,30) **\crv{(25,28)}, (25.5,30);(27,32)
**\crv{(26,32)}, (24,32);(25.2,30.5) **\crv{(24.7,32)},
(25.8,29.5);(27,28) **\crv{(26.3,28)}, (27,28);(28.5,30)
**\crv{(28,28)}, (28.5,30);(30,32) **\crv{(29,32)},
(27,32);(28.2,30.5) **\crv{(27.7,32)}, (28.8,29.5);(30,28)
**\crv{(29.3,28)},
(30,28);(31.5,30) **\crv{(31,28)}, (31.5,30);(33,32)
**\crv{(32,32)}, (30,32);(31.2,30.5) **\crv{(30.7,32)},
(31.8,29.5);(33,28) **\crv{(32.3,28)}, (33,28);(34.5,30)
**\crv{(34,28)}, (34.5,30);(36,32) **\crv{(35,32)},
(33,32);(34.2,30.5) **\crv{(33.7,32)}, (34.8,29.5);(36,28)
**\crv{(35.3,28)},
(36,28);(37.5,30) **\crv{(37,28)}, (37.5,30);(39,32)
**\crv{(38,32)}, (36,32);(37.2,30.5) **\crv{(36.7,32)},
(37.8,29.5);(39,28) **\crv{(38.3,28)}, (39,28);(40.5,30)
**\crv{(40,28)}, (40.5,30);(42,32) **\crv{(41,32)},
(39,32);(40.2,30.5) **\crv{(39.7,32)}, (40.8,29.5);(42,28)
**\crv{(41.3,28)},
\endxy
\quad
\xy (33,22)*{(d)~{W_2(D)=W_2(D,3)}},
 (22.5,42.5);(24,44) **@{-},
(25,40);(26.5,41.5) **@{-}, (21.5,42.5);(25,39) **@{-},
(24,45);(27.5,41.5) **@{-},
(27.5,42.5);(30.5,42.5) **\crv{(29,44)},
(27.5,41.5);(30.5,41.5)**\crv{(29,40)},
(25,45);(33,45) **\crv{(29,49)},
(25,39);(33,39)**\crv{(29,35)},
(31.5,42.5);(33,44) **@{-}, (34,40);(35.5,41.5) **@{-},
(30.5,42.5);(34,39) **@{-}, (33,45);(36.5,41.5) **@{-},
(36.5,42.5);(39.5,42.5) **\crv{(38,44)},
(36.5,41.5);(39.5,41.5)
**\crv{(38,40)}, (34,45);(42,45) **\crv{(38,49)}, (34,39);(42,39)
**\crv{(38,35)},
(40.5,42.5);(42,44) **@{-}, (43,40);(44.5,41.5) **@{-},
(39.5,42.5);(43,39) **@{-}, (42,45);(45.5,41.5) **@{-},
(31,56);(36,56) **@{-}, (31,52);(36,52) **@{-},
(31,56);(21.5,42.5) **\crv{(16,56.2)&(17,46)},
(31,52);(24,45)**\crv{(20.5,52)&(21,47.5)},
(36,56);(45.5,42.5)**\crv{(51,56.2)&(50,46)},
(36,52);(43,45)**\crv{(46.5,52)&(46,47.5)},
(25,28);(21.5,41.5) **\crv{(16,28.5)&(17.5,38)}, (25,32);(24,39) **\crv{(20.5,32.5)&(21.5,36.5)},
(42,28);(45.5,41.5) **\crv{(51,28.5)&(49.5,38)}, (42,32);(43,39) **\crv{(46.5,32.5)&(45.5,36.5)},
(25,28);(36,28) **@{-}, (25,32);(36,32) **@{-}, (36,28);(42,28)
**@{-}, (36,32);(42,32) **@{-},
\endxy
}
 \caption{Canonical diagrams}\label{fig-ex-canon-diag-trefoil}
\end{figure}

F. Frankel and L. Pontrjagin \cite{FP} and H. Seifert \cite{Sei} introduced a
method to construct a compact orientable surface having a given oriented link as its boundary.
A {\it Seifert surface} for an oriented link $L$ in $S^3$ is a compact,
connected, and orientable surface $\Sigma$ in $S^3$ with $\partial\Sigma=L.$ The {\it genus} of an oriented link $L$, denoted by $g(L)$, is the minimum genus of any Seifert surface of $L$. For an oriented diagram $D$ of a link $L$, it is well known
that a Seifert surface for $L$ can always be obtained from $D$ by applying Seifert's algorithm \cite{Sei}. A Seifert surface for an oriented link
$L$ constructed via Seifert's algorithm for an oriented diagram $D$ of $L$ is called
the {\it canonical Seifert surface} associated with $D$ and denoted by $\Sigma(D)$. In what follows, we denote the genus $g(\Sigma(D))$ of the canonical Seifert surface $\Sigma(D)$ by $g_c(D)$.
Then the minimum genus over all canonical Seifert surfaces for $L$
is called the {\it canonical genus} of $L$ and denoted by $g_c(L)$, i.e., \[g_c(L)=\displaystyle{\underset{\text{$D$ a diagram of $L$}}{\rm min}~g_c(D)}.\]

Note that Seifert's algorithm
applied to a knot or link diagram might not produce a minimal genus Seifert surface and the following inequality holds \cite{Sei}:
\begin{equation}\label{eq-inequality-genera}
\frac{1}{2}{\rm deg}\Delta_{K}(t)\leq g(K) \leq g_c(K).
\end{equation}
Up to now, many authors have explored
knots and links for which this inequality is strict or equal, for example,
see \cite{KK,LPS,LeS3,Liv,Mor,Nak,Tri} and therein. On the other hand,
K. Murasugi \cite{Mur} proved that if $K$ is an alternating knot, then
the equality in (\ref{eq-inequality-genera}) holds. Also we have the following:

\begin{proposition}\cite[Proposition 2.1]{Hj}\label{prop1-cr-nbr-cg-wd}
Let $K$ be a non-trivial knot and let $D$ be an oriented diagram of $K$ with $c(D)=c(K)$. Then for any integer $m$,
\begin{itemize}
\item [(i)] $g_c(W_\pm(D,m)) = g_c(W_\pm(D,w(D)))$.
\item [(ii)] $g_c(W_\pm(K,m))\leq g_c(W_\pm(D,m))=c(K).$
\end{itemize}
\end{proposition}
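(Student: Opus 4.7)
The plan is to reduce everything to the Seifert-algorithm genus formula
\[
g_c(D') \;=\; \tfrac{1}{2}\bigl(c(D') - s(D') + 1\bigr)
\]
(valid for any diagram $D'$ of a knot, where $s(D')$ is the number of Seifert circles), applied to the canonical diagram $D' = W_\pm(D,m)$. In both parts the task then becomes to compare the pair $\bigl(c(D'), s(D')\bigr)$ for the canonical diagrams of interest.

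For part (i), I would first observe that the canonical diagrams $W_\pm(D,m)$ and $W_\pm(D,w(D))$ differ only inside a single twist box placed on the two parallel strands running along $K$, the box containing $m-w(D)$ full twists. By the structure of the Whitehead clasp (Fig.~\ref{Whitehead-clasp}), these two strands are \emph{antiparallel} inside the box, so at each crossing in the box the Seifert smoothing produces a pair of U-arcs rather than the ``straight-through'' smoothing. I would then verify that the Seifert algorithm applied to a single full twist on antiparallel strands adds $2$ to the crossing count (immediate) and also $2$ to the Seifert-circle count: one new closed loop appears in the middle region bounded by the two bottom/top U-arcs of the two twist crossings, with the remaining $+1$ accounted for by the way the outer U-arcs at the top and bottom of the box re-route the long parallel strands. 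Hence $c(D')-s(D')$ is invariant under inserting or removing full twists in the box, and the formula above yields $g_c(W_\pm(D,m)) = g_c(W_\pm(D,w(D)))$.

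For part (ii), the inequality $g_c(W_\pm(K,m)) \leq g_c(W_\pm(D,m))$ is immediate from the definition of canonical genus as an infimum over diagrams of $W_\pm(K,m)$, since $W_\pm(D,m)$ is such a diagram. For the equality $g_c(W_\pm(D,m)) = c(K)$, I would use (i) to reduce to the case $m=w(D)$, in which the twist box is empty, and then count both quantities directly. The crossing count is
\[
c(W_\pm(D,w(D))) \;=\; 4c(K) + 2,
\]
namely $4c(K)$ from the $2\times 2$ blocks produced at each of the $c(D)=c(K)$ crossings of $D$, plus $2$ from the Whitehead clasp. A careful local analysis of Seifert's algorithm at each doubled crossing block and at the clasp is expected to give $s(W_\pm(D,w(D))) = 2c(K) + 3$. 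Substituting yields
\[
g_c(W_\pm(D,w(D))) \;=\; \tfrac{1}{2}\bigl((4c(K)+2) - (2c(K)+3) + 1\bigr) \;=\; c(K),
\]
as required.

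The main obstacle will be the local Seifert-circle bookkeeping in part (ii). Inside each doubled $2\times 2$ block, the four sub-crossings inherit a mixed character from the antiparallelity of the doubled strands, so one must sort out which ones Seifert-resolve to ``straight-through'' arcs and which produce U-arcs, and then track globally how those arcs glue together via the long doubled bands between crossings of $D$ and via the Seifert resolution of the clasp. The slightly surprising conclusion that $s(W_\pm(D,w(D)))$ depends only on $c(K)$ (and not on $s(D)$) is what makes the final formula clean, but it also means that the local contributions must cancel in just the right way, and this balance is where the argument earns its keep.
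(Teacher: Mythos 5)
This paper does not actually prove Proposition \ref{prop1-cr-nbr-cg-wd}; it imports it from \cite[Proposition 2.1]{Hj}, so I am comparing your plan against the standard argument (the one used by Tripp, Nakamura and Brittenham--Jensen, and in \cite{Hj}). Your strategy --- compute $g_c$ of the canonical diagrams via $\tfrac{1}{2}(c-s+1)$ and compare crossing and Seifert-circle counts --- is exactly the right one, and your target numbers $c(W_\pm(D,w(D)))=4c(K)+2$ and $s(W_\pm(D,w(D)))=2c(K)+3$ are correct. But the write-up defers precisely the step that constitutes the proof: you say a careful local analysis ``is expected to give'' the Seifert-circle count, and you yourself flag this as ``where the argument earns its keep.'' As it stands, parts (i) and (ii) each rest on an unverified bookkeeping claim, so there is a genuine gap. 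Concretely, two things must be established. First, in (i), inserting one full twist on the antiparallel band adds one small circle \emph{and} reroutes the two long strands; whether that rerouting contributes $+1$ (splitting one Seifert circle into two) or $-1$ (merging two into one) depends on whether the two strands of the band lie on the \emph{same} Seifert circle of the untwisted diagram at the location of the twist box. You assert the $+1$ case without justification. It does hold, because the twist box sits adjacent to the clasp, whose Seifert resolution already joins the two strands by a turn-back arc there --- but that observation is needed, and it is also what guarantees that stacking further full twists keeps producing splits.

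Second, and more substantially, the claim $s(W_\pm(D,w(D)))=2c(K)+3$ --- equivalently $s(W_2(D,w(D)))=2c(D)+2$ before the clasp is inserted --- independent of $s(D)$ is not something one should leave to ``expected.'' The clean way to close it is to observe that the Seifert resolution of the blackboard-framed antiparallel double of $D$ consists of (a) one small circle inside each of the $c(D)$ doubled-crossing tangles (coming from the two antiparallel sub-crossings), and (b) one circle running along the boundary of each complementary face of $D$, because the four remaining arcs at each doubled crossing are exactly the four corner-turning arcs. Since a connected $c$-crossing diagram has $c+2$ faces, this gives $s(W_2(D,w(D)))=c(D)+(c(D)+2)=2c(D)+2$, and the clasp then contributes $+2$ crossings and $+1$ Seifert circle, yielding $s=2c(K)+3$ and hence $g=c(K)$. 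This face-counting argument is also the conceptual reason the dependence on $s(D)$ cancels, which is the phenomenon you correctly identified as the crux but did not resolve. With these two pieces supplied, your proof goes through; without them it is a correct plan rather than a proof.
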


The {\it HOMFLYPT polynomial} $P_{L}(v,z)$ (or $P(L)$ for short) of an
oriented link $L$ in $S^3$ is defined by the following three axioms:
\begin{enumerate}
\item[(i)] $P_{L}(v,z)$ is invariant under ambient isotopy of $L$.

\item[(ii)] If $O$ is the trivial knot, then $P_O(v,z)=1.$

\item[(iii)] If $L_{+}$, $L_{-}$ and $L_{0}$ have diagrams $D_{+}$,
$D_{-}$ and $D_{0}$ which differ as shown in Fig.~\ref{fig-skein-diag-1},
then $v^{-1} P_{L_{+}}(v,z) - vP_{L_{-}}(v,z) = zP_{L_{0}}(v,z).$
\end{enumerate}

\begin{figure}[t]
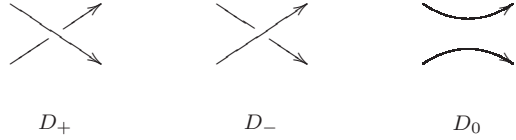

\vspace*{5pt}
\centerline{\xy (-1,14);(11,6) **@{-} ?>*\dir{>},
(11,14);(6,10.7)  **@{-} ?<*\dir{<}, (4,9.3);(-1,6) **@{-},
(5,-2)*{_{D_+}},
\endxy
 \qquad\qquad
\xy (-1,6);(11,14) **@{-} ?>*\dir{>}, (11,6);(6,9.3)  **@{-}
?<*\dir{<}, (4,10.7);(-1,14) **@{-},
(5,-2)*{_{D_-}},
\endxy
 \qquad\qquad
\xy (-1,6);(11,6) **\crv{(5,10)} ?>*\dir{>}, (-1,14);(11,14)
**\crv{(5,10)} ?>*\dir{>},
(5,-2)*{_{D_0}}, \endxy
 }
\vspace*{5pt}\caption{Skein triple}\label{fig-skein-diag-1}
\end{figure}

Let $L$ be an oriented link and let $D$ be its oriented diagram. Then $P_{L}(v,z)$ can be computed recursively by using a skein tree,
switching and smoothing crossings of $D$ until the terminal nodes are labeled with trivial links. For more details, we refer to \cite{Kaw}.
For the HOMFLYPT polynomial $P_L(v,z)$ of a link $L$, we denote the maximum degree in $z$ of $P_L(v,z)$ by $\max\deg_z P_L(v,z)$ or simply $M(L)$.

The following theorems and propositions are needed in sequel.

\begin{theorem}\cite[Theorem 2]{Mot}\label{thm-Morton-ineq}
For any oriented diagram $D$ of an oriented knot or link $L$,
\begin{equation}\label{Morton-ineq-c-gen}
\max\deg_z P_L(v,z)
\leq c(D) - s(D) + 1,
\end{equation}
where $c(D)$ is the number of crossings of $D$ and $s(D)$ is the number of the Seifert circles of $D$.
\end{theorem}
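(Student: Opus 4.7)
The plan is to prove Morton's inequality by induction on the lexicographic pair $(c(D),b(D))$, where $b(D)$ counts \emph{bad} crossings with respect to a choice of basepoint and traversal direction on each link component: a crossing is bad if the first strand visited at it is the undercrossing, so $b(D)=0$ precisely when $D$ is descending, in which case $L$ is the $\mu(D)$-component unlink.

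For the primary base case $c(D)=0$, $D$ represents the $s(D)$-component unlink and $P_L=\bigl((v^{-1}-v)/z\bigr)^{s(D)-1}$ has $\max\deg_z P_L=1-s(D)=c(D)-s(D)+1$. For the secondary base case $c(D)>0$ and $b(D)=0$, $L$ is the $\mu(D)$-component unlink and $\max\deg_z P_L=1-\mu(D)$, so the bound reduces to $s(D)-\mu(D)\leq c(D)$. I would verify this componentwise on the canonical Seifert surface $\Sigma(D)$: each connected component $\Sigma_i$ is orientable with nonempty boundary on $L$, and the Euler-characteristic identity $s_i-c_i=2-2g_i-\mu_i$ together with $g_i\geq 0$ gives $s_i-\mu_i\leq c_i$ for each component, which sums to $s(D)-\mu(D)\leq c(D)$.

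For the inductive step $b(D)>0$, choose a bad crossing and apply the HOMFLYPT skein relation
\[
v^{-1}P_{L_+}-vP_{L_-}=zP_{L_0}.
\]
Switching the chosen crossing produces a diagram $D'$ with $c(D')=c(D)$, $s(D')=s(D)$ (Seifert's algorithm depends only on the orientations, not on the over/under data), and $b(D')=b(D)-1$, so $(c(D'),b(D'))<(c(D),b(D))$ in lex order. The oriented smoothing yields $D_0$ with $c(D_0)=c(D)-1$ and $s(D_0)=s(D)$ (the Seifert circles of $D$ are computed by smoothing every crossing, this one included). The skein relation rewrites $P_L$ as a $v$-monomial times $P_{L'}$ plus $v^{\pm 1}z$ times $P_{L_0}$, so by the inductive hypothesis
\[
\max\deg_z P_L\leq\max\{c(D)-s(D)+1,\;1+c(D)-s(D)\}=c(D)-s(D)+1.
\]

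The only delicate step is the secondary base case: verifying $s(D)-\mu(D)\leq c(D)$ for a descending diagram, which hinges on the Euler-characteristic identity above together with the topological fact that every connected component of the canonical Seifert surface carries at least one component of $L$ on its boundary. Everything else is a routine skein-theoretic induction.
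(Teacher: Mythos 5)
The paper offers no proof of this statement; it is quoted directly from Morton's paper \cite{Mot}, so there is nothing internal to compare against. Your argument is, in substance, Morton's original proof: a double induction on crossing number and on the number of bad crossings, with the skein relation trading a bad crossing either for a switched diagram with one fewer bad crossing (same $c$, same $s$) or for a smoothed diagram with $c-1$ crossings and the same Seifert circles, each of which respects the bound $c(D)-s(D)+1$. The one delicate point you flag is handled correctly: for a descending diagram the needed inequality $s(D)-\mu(D)\leq c(D)$ follows from $\chi(\Sigma_i)=s_i-c_i=2-2g_i-\mu_i$ once you know $g_i\geq 0$ and $\mu_i\geq 1$, and the latter holds because each connected component of the canonical Seifert surface is assembled from discs and bands only (no $2$-handles), hence has nonempty boundary. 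I see no gap.
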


\begin{proposition}\cite[Proposition 3.1]{Hj}\label{prop3-cr-nbr-cg-wd}
Let $K$ be an oriented knot and let $D$ be an oriented diagram of $K$.
\begin{itemize}
\item [(i)] For any integer $m$ and $\epsilon=+$ or $-$, \[M(W_2(D,m)) \leq {\rm max}\{M(W_\epsilon(D,m)), 0\}-1.\]
In particular, if $M(W_\epsilon(K,m)) > 0$, then the equality holds, i.e.,
\begin{equation*}
M(W_2(D,m))=M(W_\epsilon(D,m))-1.
\end{equation*}
\item [(ii)] For any integer $m$, $M(W_2(D,w(D))) \leq {\rm max}\{M(W_2(D,m)), 1\}.$

In particular, if $M(W_2(D,w(D)))\not= 1$, then the equality holds, i.e.,
\begin{equation*}
M(W_2(D,w(D)))=M(W_2(D,m)).
\end{equation*}
\end{itemize}
\end{proposition}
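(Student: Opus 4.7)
The plan is to exploit the HOMFLYPT skein relation at carefully chosen crossings in the canonical diagrams, together with the isotopy-invariance of the maximum $z$-degree.

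\textbf{Part (i).} Fix $\epsilon=+$. I would apply the skein identity
$$v^{-1}P_{L_+}-vP_{L_-}=z\,P_{L_0}$$
to one of the two crossings forming the Whitehead clasp in the canonical diagram $W_+(D,m)$. The core step is the identification of the three diagrams $L_+,L_-,L_0$ in terms of previously named objects: smoothing a clasp crossing opens the clasp, and after the resulting Reidemeister~II move the diagram becomes (a Reidemeister variant of) the doubled link diagram $W_2(D,m)$, so $L_0$ contributes $P_{W_2(D,m)}$ up to a harmless $v^{\pm 1}$-factor coming from trivial kinks; the two crossing-choices $L_+$ and $L_-$ turn out to represent $W_+(D,m)$ itself and a diagram isotopic (as a framed knot) to $W_-(D,m)$, respectively. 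Rearranging then produces an identity of the form
$$z\,P_{W_2(D,m)}=v^{a}P_{W_+(D,m)}-v^{b}P_{W_-(D,m)}$$
with integers $a\neq b$ determined by the writhe contributions of the clasp. Comparing maximum $z$-degrees on both sides gives $M(W_2(D,m))+1\le\max\{M(W_+(D,m)),M(W_-(D,m)),0\}$, and a symmetric argument starting from the clasp of $W_-(D,m)$ refines this to the stated bound for each individual $\epsilon$; the constant $0$ accommodates the degenerate case in which the right hand side carries no positive power of $z$. Under the hypothesis $M(W_\epsilon(K,m))>0$ the equality follows because $a\neq b$ forces the leading $z$-degree terms on the right of the skein identity to live in distinct $v$-gradings and hence to survive cancellation.

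\textbf{Part (ii).} The canonical diagrams $W_2(D,m)$ and $W_2(D,w(D))$ differ only by $|m-w(D)|$ extra full twists inserted on a pair of parallel strands of the doubled link. I would induct on $|m-w(D)|$ by applying the skein relation at one crossing inside a single extra full twist. The two crossing-choices at this crossing exchange $W_2(D,m)$ with $W_2(D,m\pm 1)$, while the smoothing collapses the full twist through a Reidemeister~I move into a kink that is absorbed into a $v^{\pm 1}$-factor, yielding a diagram whose HOMFLYPT polynomial has $z$-degree at most $1$. This furnishes the one-step comparison
$$M(W_2(D,m\pm 1))\le\max\{M(W_2(D,m)),\,1\},$$
which iterates into the desired bound between $W_2(D,w(D))$ and $W_2(D,m)$. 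Under the hypothesis $M(W_2(D,w(D)))\neq 1$, the same leading-term non-cancellation argument used in Part~(i) forces the inequality to be sharp.

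The main obstacle is Part~(i): correctly identifying the three diagrams produced by the skein operation at a clasp crossing as Reidemeister variants of canonical Whitehead double and doubled link diagrams, and tracking how the writhe contributions of the clasp shift the framing parameter $m$ across these moves. Once this bookkeeping is pinned down, the inequalities fall out by direct inspection of the skein identity, and the equalities by the standard principle that leading terms in distinct $v$-gradings cannot cancel.
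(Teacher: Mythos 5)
This proposition is imported from \cite[Proposition 3.1]{Hj}; the present paper gives no proof of it, so your proposal can only be judged on its own terms. Part (ii) is essentially the right argument: the skein relation applied at a crossing of an inserted full twist exchanges $W_2(D,m)$ with $W_2(D,m\pm1)$ upon switching, while the oriented smoothing merges the two antiparallel components into a single curve bounding the cut-open annulus, hence an unknot, so the $zP_{L_0}$ term has $z$-degree exactly $1$ and the one-step comparison iterates as you say.

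Part (i), however, contains a genuine error at its central step. Switching \emph{one} of the two crossings of the positive clasp does \emph{not} produce $W_-(D,m)$: the two clasp crossings then have opposite signs, cancel by a Reidemeister~II move, and the pattern retracts to a curve bounding a disc in the solid torus, so the crossing-switched diagram is the \emph{unknot} (this is the standard fact that Whitehead doubles have unknotting number one; you would get $W_-(D,m)$ only by switching \emph{both} clasp crossings). The correct skein identity is therefore
\begin{equation*}
v^{-1}P_{W_+(D,m)}(v,z)-v\cdot 1=z\,P_{W_2(D,m)}(v,z),
\end{equation*}
from which $M(W_2(D,m))\leq\max\{M(W_+(D,m)),0\}-1$ and the equality case for $M(W_+(K,m))>0$ both fall out immediately, since the constant term $-v$ has $z$-degree $0$ and cannot cancel a positive-degree leading term. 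With your identification the best you can extract is $M(W_2(D,m))+1\leq\max\{M(W_+(D,m)),M(W_-(D,m)),0\}$, which is strictly weaker than the stated bound for each individual $\epsilon$; the ``symmetric argument from the other clasp'' you invoke only reproduces the same symmetric bound and does not close this gap, and your non-cancellation argument for the equality case likewise addresses the wrong identity. You need the unknot identification to recover the proposition as stated.
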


\begin{proposition}\cite[Proposition 3.3]{Hj}\label{prop2-cr-nbr-cg-wd}
Let $K$ be a knot in $S^3$ with the minimal crossing number $c(K)$. If $D$ is an oriented diagram of $K$ with $c(D)=c(K)$, then for any integer $m$,
\begin{align*}
\frac{1}{2} \max\deg_z P_{W_\pm(K,m)}(v,z) &\leq g_c(W_\pm(K,m))\leq g_c(W_\pm(D,m))=c(K).
\end{align*}
\end{proposition}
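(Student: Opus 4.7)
The strategy is to split the chain of inequalities into its three parts. The middle inequality $g_c(W_\pm(K,m)) \leq g_c(W_\pm(D,m))$ and the equality $g_c(W_\pm(D,m)) = c(K)$ are exactly the content of Proposition \ref{prop1-cr-nbr-cg-wd}(ii) applied to the given minimal-crossing diagram $D$, so those two assertions are free of charge (assuming, as is needed for that proposition, that $K$ is nontrivial; the trivial case is immediate), and everything reduces to proving the leftmost inequality
\[
\frac{1}{2}\max\deg_z P_{W_\pm(K,m)}(v,z) \leq g_c(W_\pm(K,m)).
\]

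For this I would establish the more general statement $\frac{1}{2}\max\deg_z P_L(v,z) \leq g_c(L)$ valid for any knot $L$, and then specialize to $L=W_\pm(K,m)$, which is a knot because the Whitehead pattern has a single component. Let $D^{*}$ be an arbitrary oriented diagram of $L$. Because $L$ has one component, the canonical Seifert surface of $D^{*}$ produced by Seifert's algorithm has genus
\[
g_c(D^{*}) \;=\; \frac{1}{2}\bigl(c(D^{*})-s(D^{*})+1\bigr).
\]
Morton's inequality (Theorem \ref{thm-Morton-ineq}) gives $\max\deg_z P_L(v,z) \leq c(D^{*}) - s(D^{*}) + 1 = 2\,g_c(D^{*})$; minimizing over all diagrams $D^{*}$ of $L$ then yields the desired bound $\frac{1}{2}\max\deg_z P_L(v,z)\leq g_c(L)$, and specialization to $L=W_\pm(K,m)$ finishes the left inequality.

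There is essentially no serious obstacle. The substantive computation — namely, that the standard Whitehead double diagram $W_\pm(D)$ has canonical genus exactly $c(K)$ — is already packaged in the cited Proposition \ref{prop1-cr-nbr-cg-wd}, and the Morton step is a routine application of the inequality together with the elementary genus formula for the canonical Seifert surface of a knot diagram. The only small care point is to verify that $W_\pm(K,m)$ is connected so that the single-component form of the diagram-genus formula applies; everything else is bookkeeping.
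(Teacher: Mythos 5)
Your proposal is correct, and it is the standard argument: the middle inequality and the equality are exactly Proposition \ref{prop1-cr-nbr-cg-wd}(ii), while the left inequality follows from Morton's bound (Theorem \ref{thm-Morton-ineq}) together with the genus formula $g_c(D^{*})=\tfrac{1}{2}\bigl(c(D^{*})-s(D^{*})+1\bigr)$ for a diagram of a knot, minimized over all diagrams of the (single-component) satellite $W_\pm(K,m)$. The paper itself gives no proof of this proposition — it is imported verbatim from \cite{Hj} — so there is nothing to compare against beyond observing that your route is the expected one. One small correction to your parenthetical: the trivial case is \emph{not} immediate — for $K$ the unknot and $m\neq 0$ the twisted Whitehead double is a twist knot of genus $1$, so $g_c(W_\pm(K,m))\leq c(K)=0$ fails; the proposition tacitly carries the non-triviality hypothesis of Proposition \ref{prop1-cr-nbr-cg-wd} rather than admitting the trivial knot as an easy case.
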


\begin{proposition}\cite[Proposition 2]{BJ}\label{BJprop2-cr-nbr-cg-wd}
If $K'$ is a knot satisfying \[\max\deg_z P_{W_\pm(K',m)}(v,z)=2c(K'),\] and if for a $c(K')$-minimizing diagram $D'$ for $K'$ we replace a crossing of $D'$, thought of as a half-twist, with three half-twists as shown in Fig.~\ref{fig:3-half twists}, producing a knot $K$, then \[\max\deg_z P_{W_\pm(K,m)}(v,z)=2c(K),\] and therefore $g_c(W_\pm(K,m))=c(K)$.
\end{proposition}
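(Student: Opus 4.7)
The plan is to sandwich $M(W_\pm(K,m))$ between matching upper and lower bounds. Because the local move in Fig.~\ref{fig:3-half twists} adds exactly two crossings of the same sign, the diagram $D$ of $K$ obtained from $D'$ satisfies $c(D)=c(D')+2=c(K')+2$, so $c(K)\le c(K')+2$. Combining this with Proposition~\ref{prop2-cr-nbr-cg-wd} immediately yields the upper bound
\begin{equation*}
M(W_\pm(K,m)) \;\le\; 2c(K) \;\le\; 2c(K')+4.
\end{equation*}

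The main task is the matching lower bound $M(W_\pm(K,m))\ge 2c(K')+4$. By Proposition~\ref{prop3-cr-nbr-cg-wd}(ii) one may assume $m=w(D)$, so that the standard diagram $W_\pm(D)$ differs from $W_\pm(D')$ only locally: the $2$-cable $\Sigma$ of the specified half-twist of $D'$ (a four-crossing tangle on four parallel strands) is replaced by its cube $\Sigma^3$ (twelve crossings), with the number of clasp-adjusting full twists compensated elsewhere in the canonical diagram. Applying the HOMFLYPT skein relation iteratively to the eight added crossings that make up $\Sigma^2$ produces an expansion of the form
\begin{equation*}
P_{W_\pm(K,m)}(v,z) \;=\; \alpha(v,z)\,P_{W_\pm(K',m)}(v,z)\;+\;\sum_i \beta_i(v,z)\,P_{L_i}(v,z),
\end{equation*}
where $\alpha,\beta_i\in\mathbb{Z}[v^{\pm1},z]$ and each auxiliary link $L_i$ admits a diagram with at most $c(K')+1$ non-twist crossings before doubling, so that Proposition~\ref{prop2-cr-nbr-cg-wd} yields $M(L_i)\le 2c(K')+2$. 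The crucial ingredient is that $\alpha(v,z)$ has $z$-degree exactly $4$ and its leading $z$-coefficient is a single monomial in $v$, a local identity in the four-strand Hecke algebra for the $2$-cable of a full twist on parallel strands. Given $M(W_\pm(K',m))=2c(K')$ by hypothesis, the top $z$-term of $\alpha\cdot P_{W_\pm(K',m)}$ lives in degree $2c(K')+4$ and cannot be cancelled by the strictly lower-degree error terms, which establishes the desired lower bound.

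Matching the two bounds forces $M(W_\pm(K,m))=2(c(K')+2)$ and $c(K)=c(K')+2$, hence $\max\deg_z P_{W_\pm(K,m)}(v,z)=2c(K)$; the canonical-genus statement $g_c(W_\pm(K,m))=c(K)$ is then immediate from Proposition~\ref{prop2-cr-nbr-cg-wd}. I expect the main obstacle to be precisely the non-cancellation claim for the leading $z$-coefficient of $\alpha$: verifying that the extremal-degree contribution from the $2$-cabled full twist does not accidentally vanish requires a local tangle-polynomial computation in the four-strand skein module, independent of the global structure of $K'$, carried out by careful bookkeeping of the skein tree applied to the added tangle $\Sigma^2$.
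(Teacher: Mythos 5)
First, a bookkeeping point: the paper does not prove this proposition at all --- it is quoted verbatim as Proposition 2 of \cite{BJ} and used as a black box. So there is no in-paper proof to match; what can be compared is your argument against the analogous arguments the paper does carry out in full (Lemma \ref{lem-max-d-lij}, Theorem \ref{main-cor-1}, Lemma \ref{m-d-1}), which establish statements of exactly this type. Your overall architecture is the standard one for this genre and is sound in outline: the upper bound $M(W_\pm(K,m))\le 2c(K)\le 2c(K')+4$ via Morton/Proposition \ref{prop2-cr-nbr-cg-wd} is correct, and the observation that a matching lower bound forces $c(K)=c(K')+2$ as a byproduct is a nice touch.

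The gap is that the lower bound --- which is the entire content of the proposition --- is asserted rather than proved, and the two claims it rests on are precisely the hard parts. (a) You need that the coefficient $\alpha(v,z)$ of $P_{W_\pm(K',m)}$ in the skein expansion of the eight added cable crossings has $z$-degree exactly $4$ with a monomial top coefficient; this is a genuine computation in the antiparallel four-strand setting (not the braid-group cube $\Sigma^3$, since the doubled strands are oppositely oriented) and you have not done it. (b) More seriously, your bound on the error terms is wrong as stated: the auxiliary links $L_i$ obtained by switching and smoothing cable crossings are \emph{not} Whitehead doubles of knots, so Proposition \ref{prop2-cr-nbr-cg-wd} does not apply to them; one must instead bound $c(L_i)-s(L_i)+1$ directly, and smoothing a crossing of the doubled diagram can \emph{merge} Seifert circles, so Morton's bound on an $L_i$ can be as large as, or larger than, the target degree. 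This is exactly why the proofs of Lemma \ref{lem-max-d-lij} and Lemma \ref{m-d-1} in this paper (and of Propositions 2 and 4 in \cite{BJ}) are long case-by-case skein-tree analyses in which several of the resolved diagrams must be further isotoped or further resolved before an adequate bound appears, and why the total $z$-degree $\deg_z\beta_i+M(L_i)$ must be tracked term by term rather than estimated uniformly. A cleaner route, and essentially the one \cite{BJ} takes, is to pass to the doubled link via Proposition \ref{prop3-cr-nbr-cg-wd}, prove $\max\deg_z P_{W_2(D)}(v,z)=2c(D)-1$ by two applications of a full-twist insertion lemma in the style of Proposition \ref{cr-1}, and then add $1$ for the clasp --- but even that insertion lemma requires the same detailed error-term control that your proposal defers.
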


\begin{figure}[t]
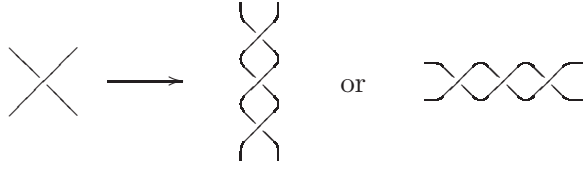

\centerline{
\xy 
(12,17);(16,13) **@{-},
(17,12);(21,8) **@{-},
(12,8);(21,17) **@{-},
(25,12.5);(35,12.5) **@{-} ?>*\dir{>},
(0,2) *{},
\endxy
\quad\quad
\xy (10,0);(10,1) **@{-},
(15,0);(15,1) **@{-}, (14.5,2.5);
(13,4) **@{-}, (12,5);(10.5,6.5) **@{-}, (10.5,2.5);(14.5,6.5) **@{-},
(14.5,8.5);(13,10) **@{-},
(12,11);(10.5,12.5) **@{-},
(10.5,8.5);(14.5,12.5) **@{-},
(10,1);(10.5,2.5) **\crv{(10,2)},
(15,1);(14.5,2.5) **\crv{(15,2)},
(14.5,6.5);(14.5,8.5) **\crv{(15.5,7.5)},
(10.5,6.5);(10.5,8.5) **\crv{(9.5,7.5)},
(10,1);(10.5,2.5) **\crv{(10,2)},
(15,1);(14.5,2.5) **\crv{(15,2)},
(14.5,6.5);(14.5,8.5) **\crv{(15.5,7.5)},
(10.5,6.5);(10.5,8.5) **\crv{(9.5,7.5)},
(14.5,12.5);(14.5,14.5) **\crv{(15.5,13.5)},
(10.5,12.5);(10.5,14.5) **\crv{(9.5,13.5)},
(14.5,14.5);(13,16) **@{-},
(12,17);(10.5,18.5) **@{-},
(10.5,14.5);(14.5,18.5) **@{-},
(10,20);(10.5,18.5) **\crv{(10,19)},
(15,20);(14.5,18.5) **\crv{(15,19)},
(10,20);(10,21) **@{-},
(15,20);(15,21) **@{-},
(25,10) *{\text{or}},
\endxy
\quad\quad
\xy (0,10);(1,10) **@{-}, 
(0,15);(1,15) **@{-}, (2.5,14.5);
(4,13) **@{-}, (5,12);(6.5,10.5) **@{-}, (2.5,10.5);(6.5,14.5) **@{-},
(8.5,14.5);(10,13) **@{-},
(11,12);(12.5,10.5) **@{-},
(8.5,10.5);(12.5,14.5) **@{-},
(1,10);(2.5,10.5) **\crv{(2,10)},
(1,15);(2.5,14.5) **\crv{(2,15)},
(6.5,14.5);(8.5,14.5) **\crv{(7.5,15.5)},
(6.5,10.5);(8.5,10.5) **\crv{(7.5,9.5)},
(1,10);(2.5,10.5) **\crv{(2,10)},
(1,15);(2.5,14.5) **\crv{(2,15)},
(6.5,14.5);(8.5,14.5) **\crv{(7.5,15.5)},
(6.5,10.5);(8.5,10.5) **\crv{(7.5,9.5)},
(12.5,14.5);(14.5,14.5) **\crv{(13.5,15.5)},
(12.5,10.5);(14.5,10.5) **\crv{(13.5,9.5)},
(14.5,14.5);(16,13) **@{-},
(17,12);(18.5,10.5) **@{-},
(14.5,10.5);(18.5,14.5) **@{-},
(20,10);(18.5,10.5) **\crv{(19,10)},
(20,15);(18.5,14.5) **\crv{(19,15)},
(20,10);(21,10) **@{-},
(20,15);(21,15) **@{-},
(0,2) *{},
\endxy}
\vspace*{0pt}\caption{Three half-twists}\label{fig:3-half twists}
\end{figure}

\begin{proposition}\cite[Proposition 4]{BJ}\label{cr-1}
If $L'$ is a non-split link with a diagram $D'$ satisfying $c(D')=c(L')$ and \[\max\deg_z P_{W_2(D')}(v,z)=2c(D')-1,\] and if $L$ is a link having a diagram $D$ obtained from $D'$ by replacing a crossing in the diagram $D'$ with a full twist (so that $c(D)=c(D')+1$), then \[\max\deg_z P_{W_2(D)}(v,z)=2c(D)-1=\max\deg_z P_{W_2(D')}(v,z)+2.\]
\end{proposition}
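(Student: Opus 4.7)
My plan is to establish the equality $M(W_2(D)) = M(W_2(D')) + 2$, which combined with the hypothesis gives $M(W_2(D)) = 2c(D)-1$. The approach is to expand $P(W_2(D))$ as an explicit linear combination of HOMFLYPT polynomials of simpler and related diagrams via the HOMFLYPT skein relation applied to the cable crossings introduced by the newly added half-twist of $D$.

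Without loss of generality assume the crossing of $D'$ being replaced is positive, so that the full twist in $D$ consists of two positive half-twists. Doubling each such half-twist of the companion yields a $2\times 2$ block of four positive crossings in the 4-strand cable, which in braid form is $\omega = \sigma_2\sigma_1\sigma_3\sigma_2$. Writing the doubled diagrams as tangle closures $W_2(D) = X\circ\omega^2$ and $W_2(D') = X\circ\omega$ for a common exterior $X$, I apply the HOMFLYPT skein relation $v^{-1}P(L_+)-vP(L_-)=zP(L_0)$ iteratively: first to one of the central $\sigma_2$-crossings of $\omega^2$, where switching triggers a Reidemeister II cancellation with its neighbour, and then to the $\sigma_1^2$ and $\sigma_3^2$ blocks that appear after invoking the commutation $\sigma_1\sigma_3=\sigma_3\sigma_1$. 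The net outcome is the explicit identity
\[
P(W_2(D)) = v^4 z^2\, P(W_2(D')) + \sum_{k} v^{a_k}z^{b_k}\, P(E_k),
\]
where each auxiliary diagram $E_k$ is either a doubled diagram on a companion with strictly fewer crossings than $D'$, or a diagram obtained from $W_2(D)$ by performing at least one oriented smoothing inside the cable block, and the exponents are explicit non-negative integers with $b_k \leq 1$.

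The distinguished term $v^4 z^2 P(W_2(D'))$ contributes $z$-degree exactly $M(W_2(D')) + 2 = 2c(D)-1$, with non-vanishing leading coefficient because the non-splitness of $L'$ ensures $P(W_2(D'))$ has a non-zero top $z$-coefficient. It remains to show that every auxiliary term satisfies $b_k+M(E_k) \leq 2c(D)-2$, so that no cancellation with the leading term is possible. For an $E_k$ arising from a smoothing inside the cable block I apply Morton's inequality (Theorem \ref{thm-Morton-ineq}) together with a local analysis of how an oriented smoothing interacts with the parallel-oriented 2-cable: each smoothing inside the $2\times 2$ block leaves a local configuration that, in combination with the surrounding exterior, supplies enough Seifert circles to yield the sharp bound. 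For an $E_k$ corresponding to a doubled diagram of a smaller companion, Morton controls it directly using the minimality hypothesis $c(D')=c(L')$, which forces the reduced companion to have no removable crossings and hence to inherit the expected Seifert structure.

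The principal technical obstacle will be the uniform verification of the inequality $b_k+M(E_k)\leq 2c(D)-2$. Naive applications of Morton's inequality to $W_2$-style diagrams are typically too weak, so the argument requires a careful case analysis over the four crossing positions of $\omega$ and over every combination of smoothings that appears along the skein tree, together with possibly further iterations of the skein reduction on those auxiliary terms until Morton becomes sharp. The non-split hypothesis on $L'$ and the minimality $c(D')=c(L')$ are both used decisively in this bookkeeping to prevent hidden cancellations or diagram simplifications that would otherwise upset the Seifert-circle count.
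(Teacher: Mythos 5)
You should first be aware that the paper does not prove this statement: it is imported verbatim from Brittenham--Jensen \cite{BJ} and used as a black box, so the only internal point of comparison is the pair of computations of exactly the same type that the authors do carry out, namely the proof of Lemma \ref{lem-max-d-lij} and all of Section \ref{sect-pf-lem1}. Measured against those, your outline has the right global shape --- resolve the doubled new crossings by the skein relation, isolate a term that is a unit times $z^{2}P_{W_2(D')}(v,z)$, and dominate everything else by Morton's inequality --- but the concrete execution breaks down in two places. First, the setup: $W_2$ here is the \emph{reverse}-parallel double, so of the four cable crossings lying over a companion crossing, two (the mixed-copy ones) carry the opposite sign and are anti-parallel, and their oriented smoothings are turn-backs. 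It is precisely the smoothing of those two crossings that erases one companion crossing and returns $W_2(D')$, and that is where the factor $z^{2}$ (times a unit in $v$, not $v^{4}$) comes from; compare the leaf $A$ of the skein tree in Fig.~\ref{tangle2} and the $z^2$-term of identity (\ref{tw-q-1}). Your parallel-braid bookkeeping with $\omega=\sigma_2\sigma_1\sigma_3\sigma_2$, ``four positive crossings'', and switches followed by Reidemeister~II cancellations cannot produce the claimed identity: switches contribute powers of $v$ with no $z$, the route you describe terminates at $\sigma_2^2$ rather than at $\omega$, and in the parallel-orientation picture no combination of oriented smoothings deletes a companion crossing at all.

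Second, and more seriously, the assertion that every auxiliary term satisfies $b_k\le 1$ and $b_k+M(E_k)\le 2c(D)-2$ \emph{is} the proposition, and you have not proved it. The natural resolution of the doubled tangle produces further $z^{2}$-leaves and several $z^{1}$-leaves whose naive Morton bounds are not small enough; forcing them below the top degree is what occupies the entire Section \ref{sect-pf-lem1} in the analogous Lemma \ref{m-d-1}, and it requires additional skein trees, isotopies that slide crossings along the two-cable and reverse orientations of components, and a case split on the number of components of the companion. (The hypotheses $c(D')=c(L')$ and non-splitness enter there, in controlling the Seifert-circle counts of the auxiliary diagrams --- not, as you suggest, in guaranteeing that $P_{W_2(D')}$ has a nonzero top coefficient, which is already the content of the hypothesis $\max\deg_zP_{W_2(D')}=2c(D')-1$.) Deferring this verification to ``a careful case analysis \ldots together with possibly further iterations of the skein reduction'' defers the whole proof: as written, your argument establishes only the easy Morton upper bound $\max\deg_zP_{W_2(D)}\le 2c(D)-1$ together with the observation that one term of an (unestablished) expansion has the right degree, not that the top coefficient survives.
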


Finally, we review Nakamura's result in \cite{Nak} about the maximum degree in $z$ of the HOMFLYPT polynomial $P_{W_2(L)}(v,z)$ of the doubled link $W_2(L)$ of a $2$-bridge link $L$, which will be used in the proof of Lemma \ref{m-d-1-1} in the section \ref{sect-miwd3-plrq}.

A {\it $2$-bridge link} $L$ is a link in $S^3$ which admits a diagram $C(a_1, a_2, \ldots, a_n)$, called Conway normal form of $L$, as shown in Fig.~\ref{fig0.1} in which each rectangle labeled $a_i$ denotes the number of half-twists with $|a_i|$ crossings as shown in Fig.~\ref{fig0.2} \cite{KaMi}.
\begin{figure}[t]
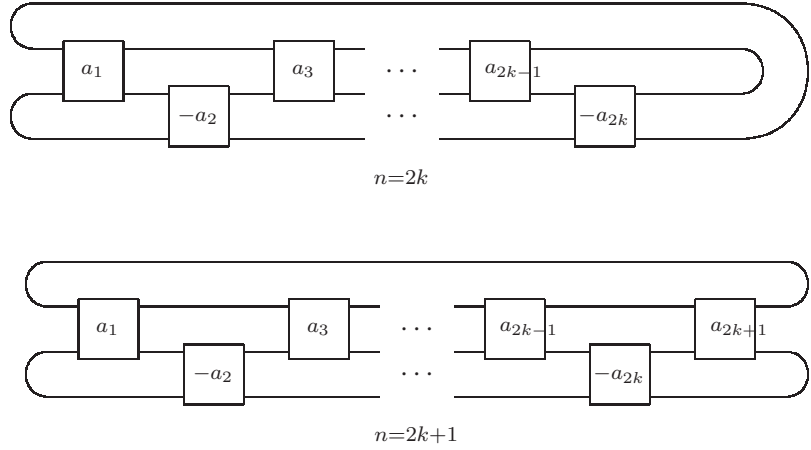

\centerline{\xy  (10,6);(18,6) **@{-},
(10,14);(18,14) **@{-}, (10,6);(10,14) **@{-}, (18,6);(18,14)
**@{-},
(24,0);(32,0) **@{-}, (24,8);(32,8) **@{-}, (24,0);(24,8) **@{-},
(32,0);(32,8) **@{-},
(38,6);(46,6) **@{-}, (38,14);(46,14) **@{-}, (38,6);(38,14)
**@{-}, (46,6);(46,14) **@{-},
(64,14);(72,14) **@{-}, (64,6);(72,6) **@{-}, (64,14);(64,6)
**@{-}, (72,14);(72,6) **@{-},
(78,8);(86,8) **@{-}, (78,0);(86,0) **@{-}, (78,8);(78,0) **@{-},
(86,8);(86,0) **@{-},
(6,13);(10,13) **@{-}, (18,13);(38,13) **@{-}, (46,13);(50,13)
**@{-},
(6,7);(10,7) **@{-}, (18,7);(24,7) **@{-}, (32,7);(38,7) **@{-},
(46,7);(50,7) **@{-},
(6,1);(24,1) **@{-}, (32,1);(50,1) **@{-},
(60,13);(64,13) **@{-}, (72,13);(100,13) **@{-},
(60,7);(64,7) **@{-}, (72,7);(78,7) **@{-}, (86,7);(100,7) **@{-},
(60,1);(78,1) **@{-}, (86,1);(100,1) **@{-}, (6,19);(100,19)
**@{-},
(6,1);(6,7) **\crv{(2,1)&(2,7)}, (6,13);(6,19)
**\crv{(2,13)&(2,19)},
(100,7);(100,13) **\crv{(104,7)&(104,13)}, (100,1);(100,19)
**\crv{(112,1)&(112,19)}, (55,-4)*{_{n=2k}},
(55,4)*{\cdots}, (55,10)*{\cdots}, (14,10)*{_{a_1}},
(28,4)*{_{-a_2}}, (42,10)*{_{a_3}}, (70,10)*{_{a_{2k-1}}},
(82,4)*{_{-a_{2k}}},
\endxy }\vskip1cm
\centerline{\xy  (10,6);(18,6) **@{-}, (10,14);(18,14) **@{-},
(10,6);(10,14) **@{-}, (18,6);(18,14) **@{-},
(24,0);(32,0) **@{-}, (24,8);(32,8) **@{-}, (24,0);(24,8) **@{-},
(32,0);(32,8) **@{-},
(38,6);(46,6) **@{-}, (38,14);(46,14) **@{-}, (38,6);(38,14)
**@{-}, (46,6);(46,14) **@{-},
(64,14);(72,14) **@{-}, (64,6);(72,6) **@{-}, (64,14);(64,6)
**@{-}, (72,14);(72,6) **@{-},
(78,8);(86,8) **@{-}, (78,0);(86,0) **@{-}, (78,8);(78,0) **@{-},
(86,8);(86,0) **@{-},
(92,14);(100,14) **@{-}, (92,6);(100,6) **@{-}, (92,14);(92,6)
**@{-}, (100,14);(100,6) **@{-},
(6,13);(10,13) **@{-}, (18,13);(38,13) **@{-}, (46,13);(50,13)
**@{-},
(6,7);(10,7) **@{-}, (18,7);(24,7) **@{-}, (32,7);(38,7) **@{-},
(46,7);(50,7) **@{-},
(6,1);(24,1) **@{-}, (32,1);(50,1) **@{-},
(60,13);(64,13) **@{-}, (72,13);(92,13) **@{-}, (100,13);(104,13)
**@{-},
(60,7);(64,7) **@{-}, (72,7);(78,7) **@{-}, (86,7);(92,7) **@{-},
(100,7);(104,7) **@{-},
(60,1);(78,1) **@{-}, (86,1);(104,1) **@{-}, (6,19);(104,19)
**@{-},
(6,1);(6,7) **\crv{(2,1)&(2,7)}, (6,13);(6,19)
**\crv{(2,13)&(2,19)},
(104,1);(104,7) **\crv{(108,1)&(108,7)}, (104,13);(104,19)
**\crv{(108,13)&(108,19)},  (55,-4)*{_{n=2k+1}},
(55,4)*{\cdots}, (55,10)*{\cdots}, (14,10)*{_{a_1}},
(28,4)*{_{-a_2}}, (42,10)*{_{a_3}}, (70,10)*{_{a_{2k-1}}},
(82,4)*{_{-a_{2k}}}, (98,10)*{_{a_{2k+1}}},
\endxy }
\vspace*{10pt} \caption{$C(a_1, a_2, \ldots, a_n)$}\label{fig0.1}
\end{figure}
\begin{figure}[t]
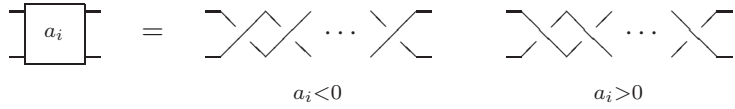

\centerline{\xy (2,0);(10,0) **@{-}, (2,8);(10,8)
**@{-}, (2,0);(2,8) **@{-}, (10,0);(10,8) **@{-}, (0,1);(2,1)
**@{-}, (0,7);(2,7) **@{-}, (10,1);(12,1) **@{-},  (10,7);(12,7)
**@{-}, (6,4)*{_{a_i}}, (19,4)*{=},
(28,1);(26,1)  **@{-}, (28,7);(26,7)  **@{-}, (28,1);(34,7)
**@{-}, (28,7);(30,5) **@{-}, (32,3);(34,1) **@{-}, (34,1);(40,7)
**@{-}, (34,7);(36,5) **@{-}, (38,3);(40,1) **@{-},
(44,4)*{\cdots}, (48,1);(54,7) **@{-}, (48,7);(50,5)  **@{-},
(52,3);(54,1) **@{-}, (56,1);(54,1) **@{-}, (56,7);(54,7) **@{-},
(68,7);(66,7)  **@{-}, (68,1);(66,1)  **@{-}, (68,7);(74,1)
**@{-}, (68,1);(70,3) **@{-}, (72,5);(74,7) **@{-}, (74,7);(80,1)
**@{-}, (74,1);(76,3) **@{-}, (78,5);(80,7) **@{-},
(84,4)*{\cdots}, (88,7);(94,1) **@{-}, (88,1);(90,3)  **@{-},
(92,5);(94,7) **@{-}, (96,1);(94,1) **@{-}, (96,7);(94,7) **@{-},
 (41,-4)*{_{a_i < 0}}, (81,-4)*{_{a_i > 0}},
\endxy
 } \vspace*{10pt} \caption{Half-twists}\label{fig0.2}
\end{figure}
We close this section with the following proposition which comes from \cite[Proposition 5]{Nak} immediately.

\begin{proposition}\label{prop-Nakamura}
Let $D=C(a_1, a_2, \ldots, a_n)$ with $a_i > 0$ for $i=1, 2, \ldots, n$. Then
\[\max\deg_z P_{W_2(D)}(v,z)=2c(D)-1.\]
\end{proposition}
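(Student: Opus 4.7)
As the authors note, Proposition \ref{prop-Nakamura} is an immediate translation of \cite[Proposition 5]{Nak}, so the proof plan reduces to verifying that the present notation matches Nakamura's hypotheses. Since every $a_i > 0$, the diagram $D = C(a_1, \ldots, a_n)$ is a reduced alternating diagram of a 2-bridge knot or link with $c(D) = \sum_{i=1}^{n} a_i$, which is exactly the situation covered by Nakamura's Proposition 5. Quoting that result gives $\max\deg_z P_{W_2(D)}(v,z) = 2c(D) - 1$, and there is essentially nothing further to check.

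For a self-contained derivation using only the tools already developed in this paper, I would argue by induction on $N = c(D) = \sum_{i=1}^{n} a_i$. The base case $n = 1$ reduces to $D = C(a_1)$, the standard diagram of the $(2, a_1)$-torus link. Tripp's calculation in \cite{Tri} gives $\max\deg_z P_{W_\pm(T(2, a_1), m)}(v,z) = 2a_1 > 0$, and Proposition \ref{prop3-cr-nbr-cg-wd}(i) then converts this to $M(W_2(D)) = 2a_1 - 1 = 2c(D) - 1$. For the inductive step, given $D = C(a_1, \ldots, a_n)$ with some $a_i \geq 2$, form $D' = C(a_1, \ldots, a_i - 1, \ldots, a_n)$: the inductive hypothesis applies to $D'$ since $c(D') = N - 1$ and all its entries remain positive. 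Since $D$ is obtained from $D'$ by replacing a single crossing (inside the $a_i$-box) with a full twist of matching sign, Proposition \ref{cr-1} delivers $M(W_2(D)) = M(W_2(D')) + 2 = 2c(D) - 1$.

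The main obstacle in the self-contained approach is the configuration $D = C(1, 1, \ldots, 1)$, which cannot be produced from a smaller positive Conway form via the crossing-to-full-twist operation described above and so is not reached by the induction as stated. This residual case has to be treated separately: either by direct HOMFLYPT calculation using skein relations applied to the standard doubled diagram, by exploiting continued-fraction equivalence of Conway forms to rewrite the diagram in a form with at least one entry $\geq 2$ (and then invoking that $\max\deg_z$ of the HOMFLYPT is a link invariant, so one may swap the doubled-link diagram for an equivalent one at the same writhe), or by simply appealing once more to Nakamura's Proposition 5, which covers it directly. Either way, the bulk of the induction is mechanical once Proposition \ref{cr-1} is in hand.
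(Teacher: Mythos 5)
Your first paragraph is exactly the paper's proof: the paper offers no argument beyond remarking that the proposition ``comes from [Nakamura, Proposition 5] immediately,'' so quoting that result after checking that the all-positive Conway form meets its hypotheses is all that is required. The supplementary induction via Proposition~\ref{cr-1} is a reasonable bonus route (with the $C(1,1,\ldots,1)$ gap you correctly flag), but it is not part of the paper's treatment and is not needed.
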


\section{Proof of Theorem \ref{main-thm1}}
\label{sect-1-braid}

In this section, we prove Theorem \ref{main-thm1}. For this purpose, we first prove the following:

\begin{lemma}\label{lem-max-d-lij}
Let $L_{i}$ be a $(2,i)$-torus link, the closure of the braid $\sigma_1^i \in B_2$ (with parallel orientation) as shown in Fig.~\ref{con-1}, and  let $L_{i,j}=L_{i}\sharp L_{j}$ be the connected sum of two torus links $L_i$ and $L_j$ with $i, j \geq 2$ as shown in Fig.~\ref{con-2}. Then
\begin{equation*}
\max\deg_z P_{W_2(L_{i,j})}(v,z)=2(i+j)-3=2c(L_{i,j})-3.
\end{equation*}

\begin{figure}[ht]
\begin{center}
\resizebox{0.40\textwidth}{!}{%
\includegraphics{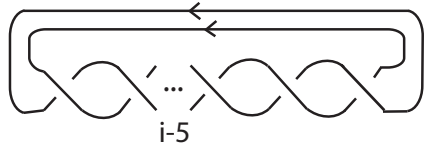} }
\caption{A $(2,i)$-torus link $L_{i}$}\label{con-1}
\end{center}
\end{figure}

\begin{figure}[ht]
\begin{center}
\resizebox{0.85\textwidth}{!}{%
  \includegraphics{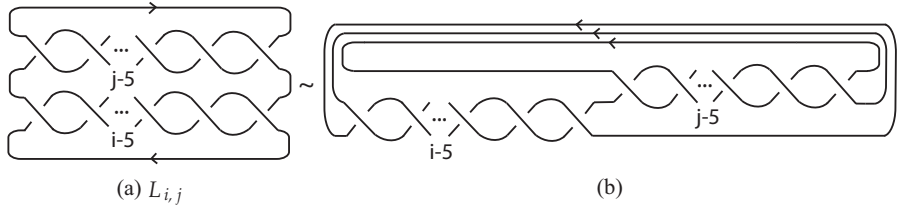} }
\caption{$L_{i,j}=L_i\sharp L_j$}\label{con-2}
\end{center}
\end{figure}
\end{lemma}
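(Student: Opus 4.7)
The plan is to prove $M(W_2(L_{i,j})) = 2(i+j)-3$ by induction on $i+j \geq 4$, with base case $(i,j)=(2,2)$. Realize $L_{i,j}$ as the closure of the alternating $3$-braid $\sigma_1^i\sigma_2^{-j}$; then $L_{i+1,j}$ (respectively $L_{i,j+1}$) is obtained from $L_{i,j}$ by replacing one $\sigma_1$-crossing (respectively $\sigma_2^{-1}$-crossing) with a full twist, so the inductive framework aligns naturally with the full-twist technology behind Proposition \ref{cr-1}.

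For the inductive step, I would establish a generalization of Proposition \ref{cr-1}: for any non-split link $L'$ with a reduced alternating diagram $D'$ (so $c(D') = c(L')$) and any link $L$ with diagram $D$ obtained from $D'$ by replacing one crossing with a full twist, one has $M(W_2(D)) = M(W_2(D')) + 2$. The proof mirrors Brittenham and Jensen's argument for Proposition \ref{cr-1}: apply the HOMFLYPT skein relation to one of the newly added crossings in the doubled diagram $W_2(D)$, and bound the three resulting terms using Morton's inequality (Theorem \ref{thm-Morton-ineq}) together with a direct identification of the $L_-$ and $L_0$ summands as doubled diagrams of simpler links. Crucially, this argument tracks only the \emph{increment} in $M(W_2)$ under a full-twist replacement and does not require the absolute equality $M(W_2(D'))=2c(D')-1$ used in the original Proposition \ref{cr-1}. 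Iterating the generalization from $(i,j) = (2,2)$ yields $M(W_2(L_{i,j})) = M(W_2(L_{2,2})) + 2\bigl((i-2)+(j-2)\bigr) = 5 + 2(i+j-4) = 2(i+j)-3$.

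For the base case $M(W_2(L_{2,2})) = 5$: the link $L_{2,2}$ is a three-component chain of unknots $A$, $B$, $C$ in which $A\cup B$ and $B\cup C$ each form a Hopf link and $A\cup C$ is unlinked. For the upper bound $M(W_2(L_{2,2})) \leq 5$, I would apply Morton's inequality to a carefully constructed non-canonical diagram $D^*$ of $W_2(L_{2,2})$ satisfying $c(D^*) - s(D^*) + 1 = 5$; such a diagram arises from the fact that $W_2(L_{2,2})$ admits a genus-$0$ Seifert surface obtained by joining the three disjoint doubling-annuli (one per component of $L_{2,2}$) with two tubes, which can be realized planarly as the canonical Seifert surface of an appropriate diagram. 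For the matching lower bound $M(W_2(L_{2,2})) \geq 5$, I would expand $P_{W_2(L_{2,2})}(v,z)$ via the skein relation applied to selected crossings of the $W_2$-diagram, reducing the computation to a $\mathbb Z[v^{\pm 1}, z]$-combination of HOMFLYPT polynomials of $W_2(L_2)$ (the doubled Hopf link, with $M=3$ by Proposition \ref{prop-Nakamura}) and a few explicitly computable auxiliary pieces, then isolate a non-vanishing coefficient of $z^5$.

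The main obstacle is the base case: exhibiting the specific non-canonical diagram $D^*$ of $W_2(L_{2,2})$ that realizes Morton's bound tightly, and performing the HOMFLYPT skein bookkeeping needed to extract the coefficient of $z^5$. The inductive generalization of Proposition \ref{cr-1} is technical but is largely a routine extension of the Brittenham-Jensen argument, in which the key skein-relation step goes through without needing the absolute value of $M(W_2(D'))$.
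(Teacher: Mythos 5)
Your overall strategy (induction via full-twist replacements on the alternating $3$-braid diagram of $L_{i,j}$) is in the same spirit as the paper, but the step you describe as ``largely routine'' is precisely where the argument breaks, and it is where the paper spends essentially all of its effort.

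The proposed generalization of Proposition~\ref{cr-1} --- that for a reduced alternating $D'$ one always has $M(W_2(D))=M(W_2(D'))+2$ after a full-twist replacement, \emph{without} the hypothesis $M(W_2(D'))=2c(D')-1$ --- cannot be obtained by mirroring the Brittenham--Jensen argument. Their proof controls the non-principal branches of the skein tree by Morton's inequality, which bounds them by roughly $2c(D)-2$; this suffices only because the principal branch sits at the Morton extreme $2c(D)-1=M(W_2(D'))+2$, so it strictly dominates and cannot be cancelled. In the present situation $M(W_2(L_{i,j}))=2c(L_{i,j})-3$ is \emph{two below} the Morton bound, so the principal branch contributes only at degree $2c(D)-3$ while the crude Morton bounds on the other branches allow degree $2c(D)-2$. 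Consequently neither the upper bound $M(W_2(D))\le M(W_2(D'))+2$ nor the non-cancellation of the top term follows from that template; the increment lemma you need is exactly as hard as the original statement. This is why the paper (i) passes to a non-standard diagram $\tilde D_{i,j}$ with two extra Seifert circles merely to get the upper bound $N_{i,j}=2(i+j)-3$ from Morton's inequality, and (ii) runs an eight-branch partial skein tree at the connect-sum tangle and bounds each branch individually by $N_{i,j}-2$ or less, using auxiliary isotopies, orientation reversals, and recursive skein trees (the treatment of $D^5_{i,j}$, $D^6_{i,j}$, $D^7_{i,j}$ in particular is not a Morton count). Until you supply an actual proof of your increment lemma at this level of detail, the inductive step is a gap, not a technicality. (Secondarily: the paper avoids your $(2,2)$ base case entirely by starting the induction at $i=1$, where $L_{1,j}=L_j$ and Tripp's computation $M(W_2(L_j))=2j-1$ applies; if you keep your base case you still owe the genus-$0$ diagram $D^*$ and the $z^5$-coefficient extraction, which you acknowledge but do not carry out.)
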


\begin{figure}[ht]
\begin{center}
\resizebox{0.80\textwidth}{!}{%
\includegraphics{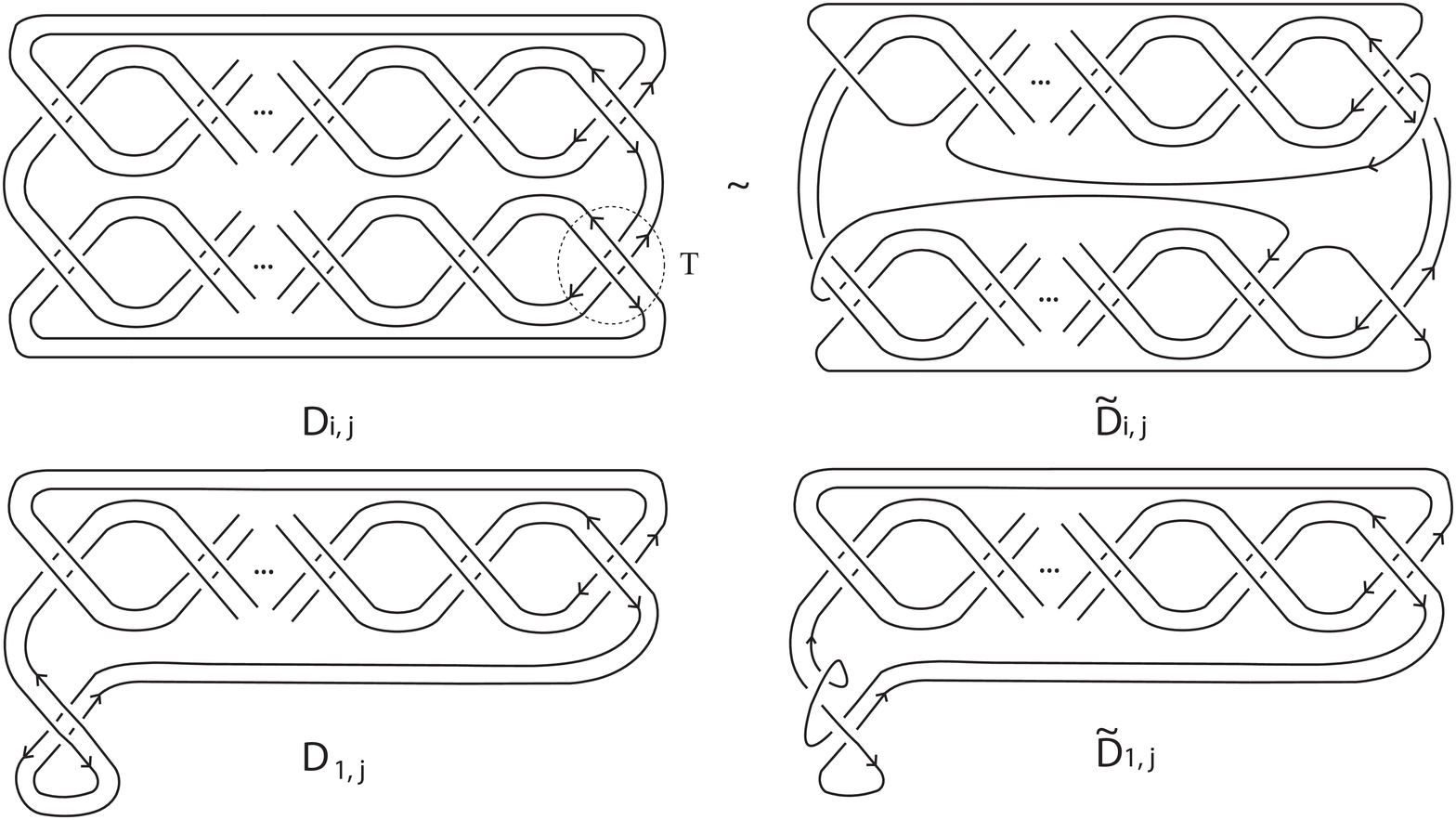} }
\caption{Two diagrams of $W_{2}(L_{i,j})$}
\label{tw1}
\end{center}
\end{figure}

\begin{proof}
For any pair $i\geq 1$ and $j\geq 2$, let $D_{i,j}$ denote the standard diagram of the doubled link $W_{2}(L_{i,j})$ of the connected sum $L_{i,j}$ as shown in the left-hand side of Fig.~\ref{tw1} and we consider another diagram $\tilde D_{i,j}$ of $W_{2}(L_{i,j})$, which is obtained from $D_{i,j}$ by isotopy deformations as illustrated in the right-hand side of Fig.~\ref{tw1}. For our convenience, for each $j\geq 2$ we define $D_{0,j}$ to be the standard diagram of the doubled link $W_{2}(L_{j})$ of a $(2,j)$-torus link $L_{j}$ and then define $\tilde D_{0,j}= D_{0,j} \amalg O^2$, the split union of $D_{0,j}$ and the $2$-component trivial link $O^2$. Then $\max\deg_z P_{W_2(L_{i,j})}(v,z)=M(D_{i,j})=M(\tilde D_{i,j})$ for $i\geq 1$ and $j\geq 2$, and $c(\tilde D_{i,j})=4(i+j)=c(D_{i,j})$ and $s(\tilde D_{i,j})=2(i+j)+4=s(D_{i,j})+2$ for $i\geq 0$ and $j\geq 2$. Note that if $i,j \geq 2$, then $L_{ij}$ is a reduced alternating diagram (see (a) in Fig.~\ref{con-2}) and so $c(L_{i,j})=i+j$. 

For $i\geq 0$ and $j\geq 2$, let $N_{i,j}$ denote the integer defined by
\begin{equation*}
N_{i,j}=c(\tilde D_{i,j})-s(\tilde D_{i,j})+1=4(i+j)-\{2(i+j)+4\}+1=2(i+j)-3.
\end{equation*}
By Morton's inequality in (\ref{Morton-ineq-c-gen}), we obtain that for any pair $i\geq 1$ and $j\geq 2$,
\begin{equation*}
\max\deg_z P_{W_2(L_{i,j})}(v,z)=M(D_{i,j})=M(\tilde D_{i,j}) \leq N_{i,j}.
\end{equation*}
Indeed, what we want to prove is that the equality 
\begin{equation}\label{pf-claim-1}
M(\tilde D_{i,j}) = N_{i,j}
\end{equation} holds for any pair $i\geq 1$ and $j\geq 2$. For any given fixed integer $j \geq 2$, we prove the assertion (\ref{pf-claim-1}) by induction on $i\geq 1$. 

In \cite[Proposition 1]{Tri}, it is known that $\max\deg_z P_{W_2(L_{j})}(v,z)=2j-1$ for each integer $j\geq 2$. Since $L_{1,j}=L_1\sharp L_j=L_j$, we obtain that
\begin{align*}
\max\deg_z P_{W_2(L_{1,j})}(v,z)=M(D_{1,j})=M(\tilde D_{1,j})=2j-1=2(1+j)-3=N_{1,j}.
\end{align*} 
This gives that the assertion (\ref{pf-claim-1}) holds for the initial step $i=1$.

\bigskip
 
Now we assume that $i \geq 2$ and the assertion (\ref{pf-claim-1}) holds for every integers $k$ with $1\leq k \leq i-1$.
We consider a partial skein tree for the tangle $T$ in $D_{i,j}$ as shown in Fig.~\ref{tangle2}. We label all nodes in the partial skein tree with $A,~B,~E_{1},~F_{1},~F_{2},~F_{3},~F_{4}$ and $G$ as in Fig.~\ref{tangle2}.

\begin{figure}[ht]
\begin{center}
\resizebox{0.35\textwidth}{!}{%
\includegraphics{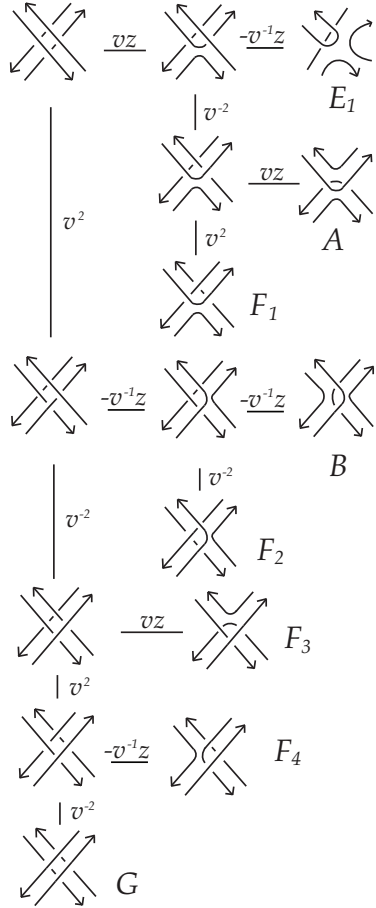} }
\caption{A partial skein tree for $T$.}
\label{tangle2}
\end{center}
\end{figure}

For each $k=1,2,\ldots, 8$, let $D_{i,j}^{k}$ be the link diagram obtained from $D_{i,j}$ by replacing the tangle $T$ with the tangle $T_{k}$, where
\begin{equation}\label{tang-assign}
T_{1}=A,~ T_{2}=B,~T_{3}=E_{1},~T_{4}=F_{1},~T_{5}=F_{2},~ T_{6}=F_{3},~ T_{7}=F_{4},~ T_{8}=G.\end{equation}
Note that two diagrams $D_{i,j}$ and $D_{i,j}^{k}$ are identical except the parts of them corresponding to the tangle $T$.
From the skein relation for the HOMFLYPT polynomial and a partial skein tree for the tangle $T$ in $D_{i,j}$, we obtain
\begin{align}\label{tw-q-1}
&P_{D_{i,j}}(v,z)=(P_{D_{i,j}^{1}}(v,z)+ P_{D_{i,j}^{2}}(v,z)-P_{D_{i,j}^{3}}(v,z))z^{2}+(vP_{D_{i,j}^{4}}(v,z)\notag\\
&~~~~~-v^{-1}P_{D_{i,j}^{5}}(v,z)+vP_{D_{i,j}^{6}}(v,z)-vP_{D_{i,j}^{7}}(v,z))z+P_{D_{i,j}^{8}}(v,z).
\end{align}
Using this equation, we are going to calculate the maximum degree in $z$ of $P_{D_{i,j}}(v,z)$ ($=P_{\tilde D_{i,j}}(v,z)$).
We first observe that $D^1_{i,j}$ and $D^8_{i,j}$ are isotopic to $\tilde D_{i-1,j}$ and $\tilde D_{i-2,j}$, respectively. Hence it follows from induction hypothesis that
\begin{align}
&M(\tilde D_{i-1,j})=N_{i-1,j}~ (i\geq 2),\notag\\
&M(\tilde D_{i-2,j})=N_{i-2,j}~ (i\geq 3).\label{pf-ind-st}
\end{align}
For $i=2$ in (\ref{pf-ind-st}), it is easily seen that
\begin{align}
M(\tilde D_{0,j})&=M(D_{0,j} \amalg O^2)=M(W_2(L_j) \amalg O^2)\notag\\&=M(W_2(L_j))-2=2j-3=N_{0,j}.\label{pf-d02-eq1}
\end{align}
Hence we have
\begin{align}
&\max\deg_{z}P_{D_{i,j}^{1}}(v,z)=M(\tilde D_{i-1,j})=N_{i-1,j}=N_{i,j}-2~ (i\geq 2),\label{tw-q-2}\\
&\max\deg_{z}P_{D_{i,j}^{8}}(v,z)=M(\tilde D_{i-2,j})=N_{i-2,j}=N_{i,j}-4~ (i\geq 2).\label{tw-qq-2}
\end{align}
It is evident that the link $D_{i,j}^{2}$ and $D_{i,j}^{3}$ do not contribute anything to $M(D_{i,j})=\max\deg_z P_{D_{i,j}}(v,z)$. 

\begin{figure}[ht]
\begin{center}
\resizebox{0.70\textwidth}{!}{%
  \includegraphics{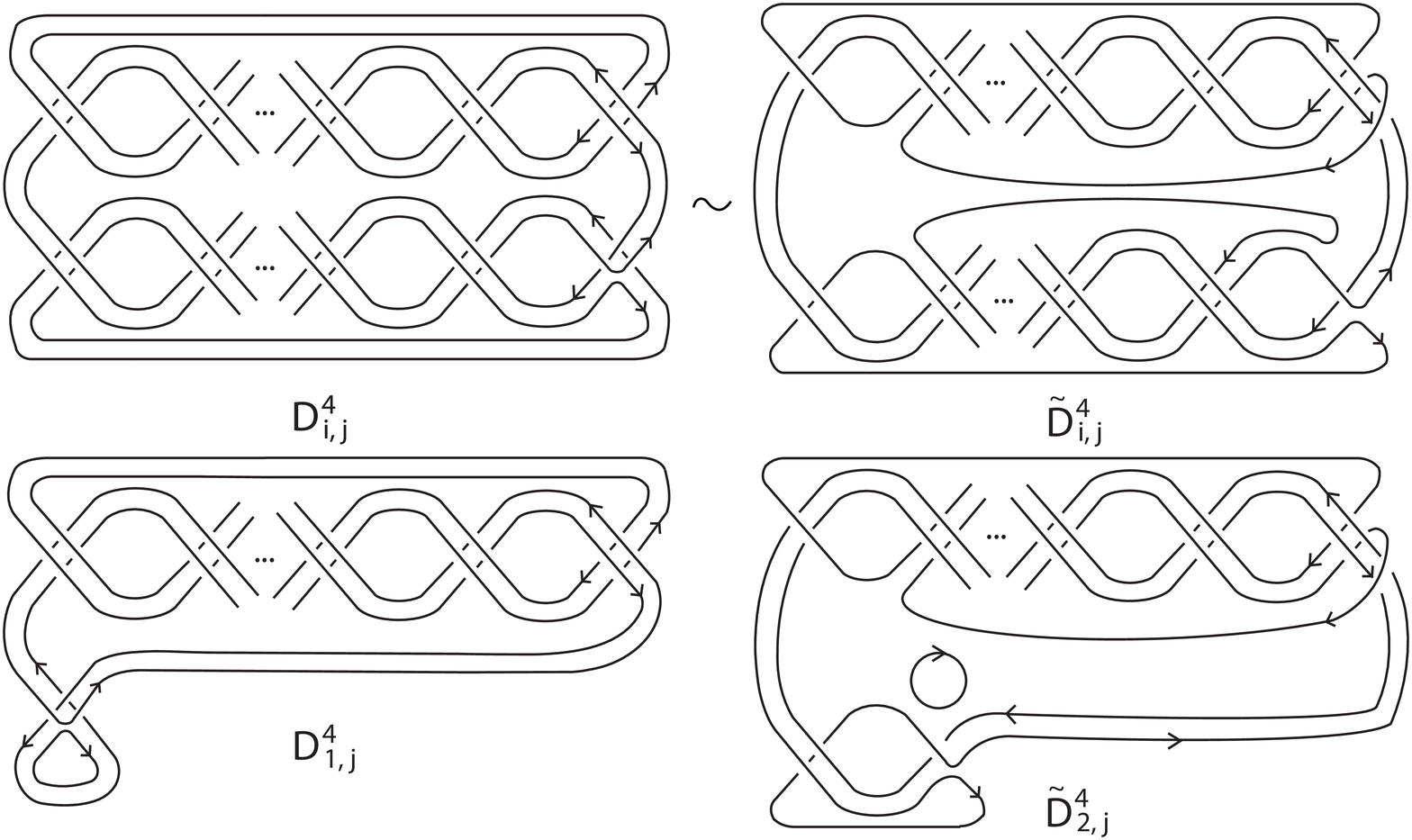} }
\caption{Two diagrams of $D_{i,j}^{4}$}
\label{tw2}
\end{center}
\end{figure}

\begin{figure}[h]
\begin{center}
\resizebox{0.75\textwidth}{!}{%
\includegraphics{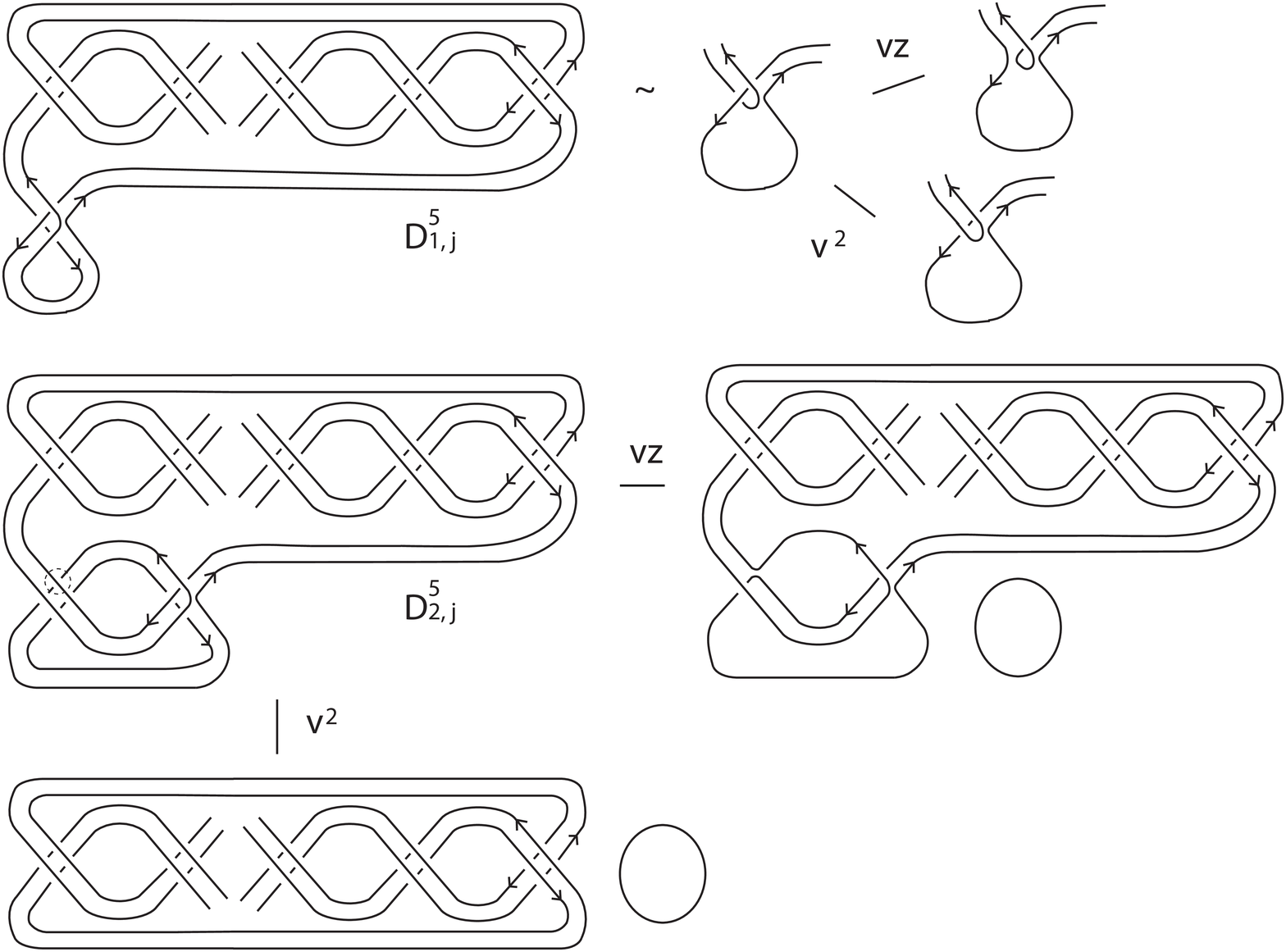} }
\caption{Partial skein trees for $D_{1,j}^{5}$ and $D_{2,j}^{5}$}
\label{tw5-2}
\end{center}
\end{figure}

To estimate $M(D_{i,j}^{4})$, we consider a link diagram $\tilde D_{i,j}^{4}$ obtained from $D_{i,j}^{4}$ by isotopy deformations as illustrated in Fig.~\ref{tw2}. Then it follows that
\begin{align}
\max\deg_{z}P_{D_{i,j}^{4}}(v,z)
&=\max\deg_{z}P_{{\tilde D}_{i,j}^{4}}(v,z)
\leq c({\tilde D}_{i,j}^{4})-s({\tilde D}_{i,j}^{4})+1\notag\\
&= (c({\tilde D}_{i,j})-5)-(s({\tilde D}_{i,j})-2)+1=N_{i,j}-3.
\label{tw-q-3}
\end{align}

\begin{figure}
\begin{center}
\resizebox{0.65\textwidth}{!}{%
  \includegraphics{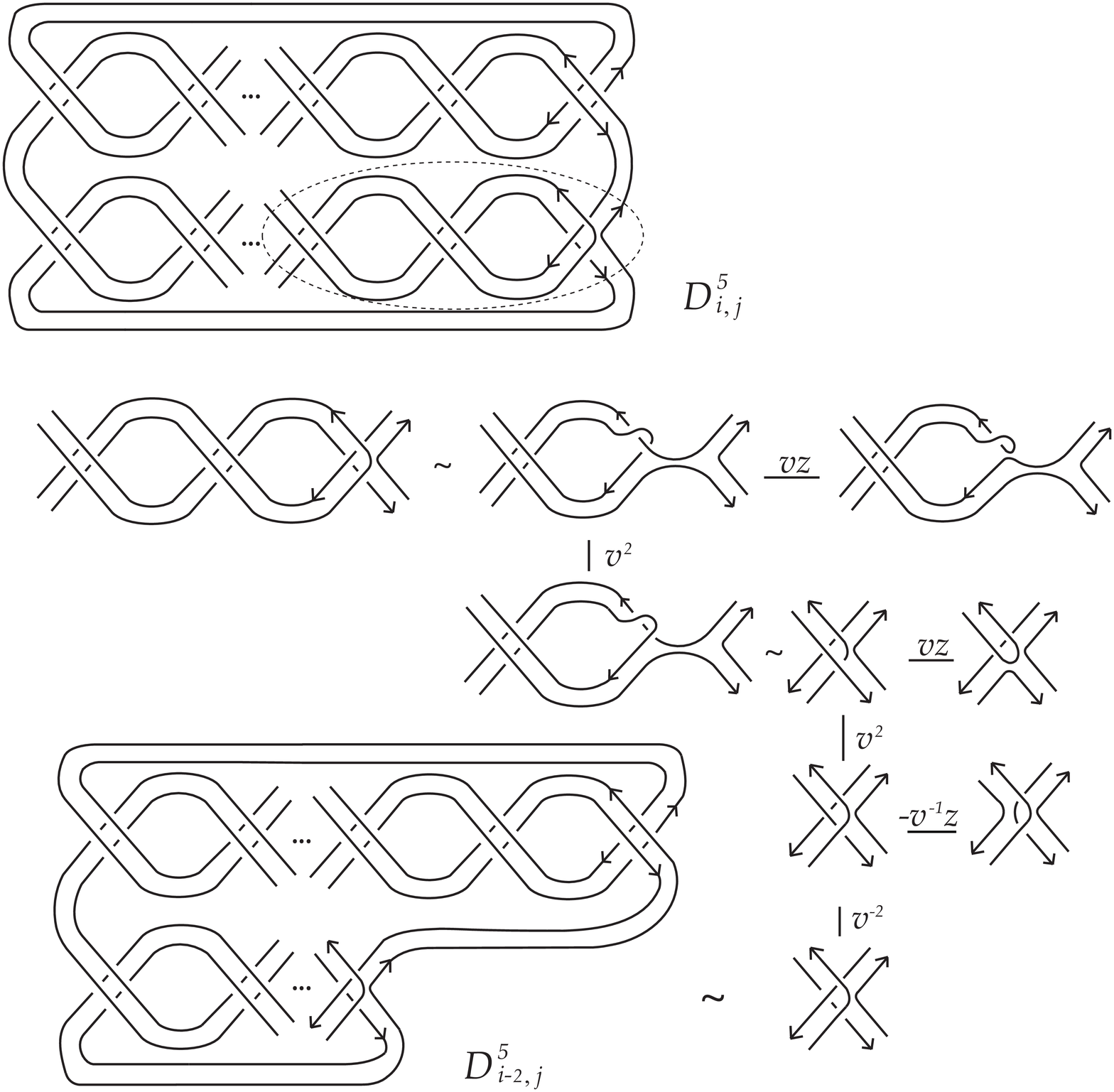} }
\caption{A partial skein tree for $D_{i,j}^{5} (i\geq 3)$}
\label{tw3}
\end{center}
\end{figure}

For $D_{i,j}^{5}$, if $i=1$, then we observe from Fig.~\ref{tw5-2} that 
\begin{equation}
M(D_{1,j}^{5})=M(W_2(L_j))+1=2j. \label{pf-5i-2-1}
\end{equation}
If $i=2$, then we observe from Fig.~\ref{tw5-2} that 
\begin{align}
M(D_{2,j}^{5})&=M(W_2(L_j)\amalg O)=M(W_2(L_j))-1=2j-2\leq N_{2,j}-3. \label{pf-5i-2-2}
\end{align}
If $i\geq 3$, then the partial skein trees in Fig.~\ref{tw3} yield $$\max\deg_{z}P_{D_{i,j}^{5}}(v,z)=M(D_{i-2,j}^{5}).$$ Hence 
\begin{align}
M(D_{i,j}^{5})&= 
\begin{cases}
M(D^5_{1,j})~\text{if $i$ is odd $\geq 3$;}\\
M(D^5_{2,j})~\text{if $i$ is even $\geq 4$.}
\end{cases}=\begin{cases}
2j~\text{if $i$ is odd $\geq 3$;}\\
2j-2~\text{if $i$ is even $\geq 4$.}
\end{cases}\notag\\
&\leq N_{i,j}-3~(i\geq 3).\label{pf-5i-2-3}
\end{align}
Thus we obtain from (\ref{pf-5i-2-2}) and (\ref{pf-5i-2-3}) that
\begin{align}
&\max\deg_{z}P_{D_{i,j}^{5}}(v,z)\leq N_{i,j}-3.\label{tw-q-4}
\end{align}

\begin{figure}[ht]
\begin{center}
\resizebox{0.50\textwidth}{!}{%
  \includegraphics{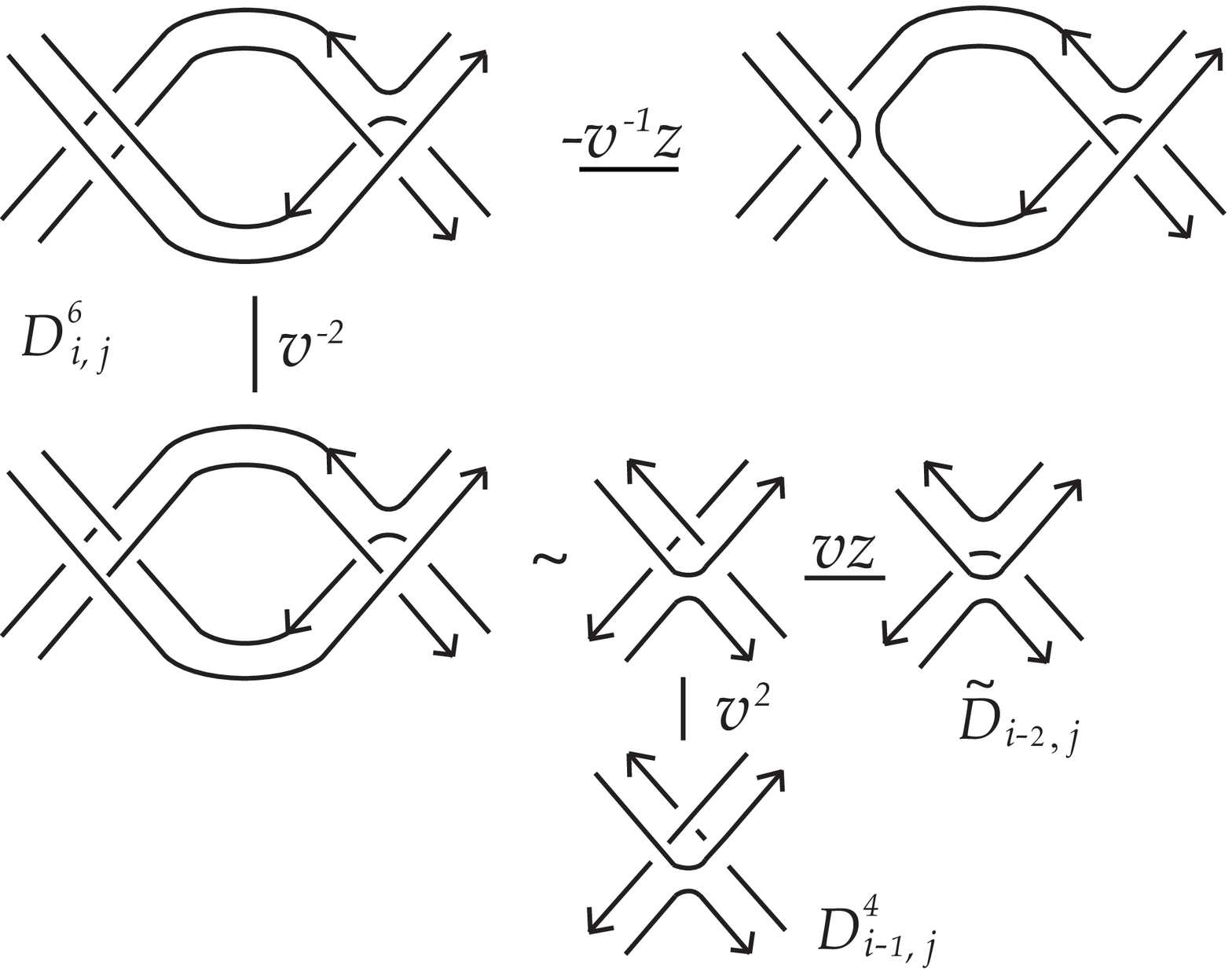} }
\caption{A partial skein tree for $D_{i,j}^{6}$}
\label{tw4}
\end{center}
\end{figure}

For $D_{i,j}^{6}$, the partial skein trees in Fig.~\ref{tw4} yield
\begin{align*}
&\max\deg_{z}P_{D_{i,j}^{6}}(v,z)\leq \max\{M(D_{i-1,j}^{4}),~M(\tilde D_{i-2,j})+1\}.%
\end{align*}
We remind that $M(\tilde D_{i-2,j})=N_{i-2,j}=N_{i,j}-4$ shown in (\ref{pf-ind-st}) and (\ref{pf-d02-eq1}). Observe that $M(D_{1,j}^{4})=M(W_2(L_j)\amalg O)=M(W_2(L_j))-1=2j-2=N_{2,j}-3$ (see Fig.~\ref{tw2}).
And, if $i\geq 3$, then it follows from the Morton's inequality in (\ref{Morton-ineq-c-gen}) that
\begin{align*}
M(D_{i-1,j}^{4})&=M(\tilde D_{i-1,j}^{4}) \leq c(\tilde D_{i-1,j}^{4})-s(\tilde D_{i-1,j}^{4})+1\\
&= (c(\tilde D_{i,j})-9)-(s(\tilde D_{i,j})-4)+1=N_{i,j}-5.
\end{align*}
These observations gives
\begin{align}
&\max\deg_{z}P_{D_{i,j}^{6}}(v,z)\leq \max\{N_{i,j}-5,~N_{i,j}-3\}=N_{i,j}-3. \label{tw-q-5}
\end{align}

Now we estimate the maximum degree in $z$ of $P_{D_{i,j}^{7}}(v,z)$. Observe that $D_{i,j}^{7}$ is clearly isotopic to the diagram $D_1$ in Fig.~\ref{tw5}. For $i=2,$ it is easy to see that $D_{2,j}^{7}=W_2(L_j)\amalg O$ and so $M(D_{2,j}^{7})=N_{2,j}-3$ as seen in (\ref{pf-5i-2-1}).
For $i\geq 3$, moving two crossings of $D_1$ labeled $1, 2$ along $2$-parallel strings by isotopy, they appear in the place adjacent to the crossing labeled $3, 4,$ respectively, as indicated in $D_1$ or $D_2$ according to the parity of $i$, and two parallel strings of the components in $D_{1}$ under consideration are switched each other. Hence the resulting diagram after applying Reidemeister move of type II yield the diagram $D_3$ in Fig.~\ref{tw5} with reverse orientations on the components in $D_{1}$ under consideration. Obviously, we can reverse orientations of the remaining components in $D_3$ (if they exist) by isotopy. From the partial skein tree for $D_3$ in Fig.~\ref{tw6} together with (\ref{pf-5i-2-1}) and (\ref{tw-q-4}), we obtain \begin{align*}
\max\deg_{z}P_{D_{i,j}^{7}}(v,z)&=\max\deg_{z}P_{D_3}(v,z)=\max\deg_{z}P_{-D_{i-2,j}^{5}}(v,z)\notag\\
&=\max\deg_{z}P_{D_{i-2,j}^{5}}(v,z) \leq N_{i,j}-3,
\end{align*} where $-D_{i-2,j}^{5}$ is the diagram $D_{i-2,j}^{5}$ with reversed orientation as shown in Fig.~\ref{tw7} (cf. Fig.~\ref{tw3}). These observations implies
\begin{equation}
\max\deg_{z}P_{D_{i,j}^{7}}(v,z) \leq N_{i,j}-3, \label{tw-q-6}
\end{equation} 

\begin{figure}[ht]
\begin{center}
\resizebox{0.70\textwidth}{!}{%
  \includegraphics{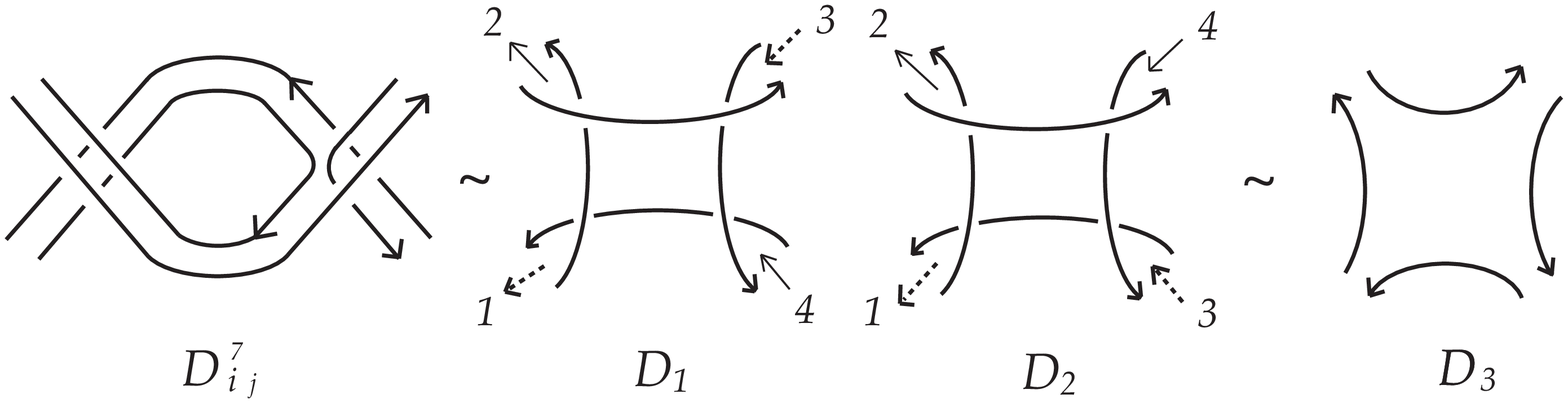} }
\caption{$D_{i,j}^{7}$}\label{tw5}
\end{center}
\end{figure}

\begin{figure}[ht]
\begin{center}
\resizebox{0.55\textwidth}{!}{%
  \includegraphics{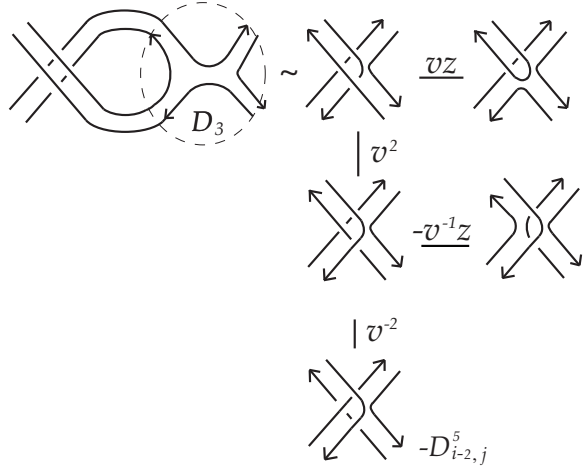} }
\caption{A partial skein tree for $D_{3}$}\label{tw6}
\end{center}
\end{figure}

\begin{figure}[ht]
\begin{center}
\resizebox{0.45\textwidth}{!}{%
  \includegraphics{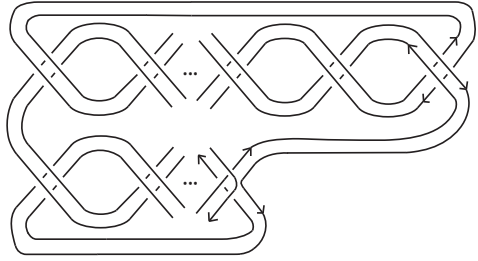} }
\caption{$-D_{i-2,j}^{5}$}\label{tw7}
\end{center}
\end{figure}

Combining (\ref{tw-q-1}), (\ref{tw-q-2})-(\ref{tw-q-3}) (\ref{tw-q-4}) and (\ref{tw-q-4})-(\ref{tw-q-6}), we obtain that 
\begin{align*}
\max&\deg_{z}P_{D_{i,j}}(v,z) = \max\{M(D_{i,j}^{1})+2, M(D_{i,j}^{2})+2,M(D_{i,j}^{3})+2,\notag\\
 & M(D_{i,j}^{4})+1, M(D_{i,j}^{5})+1,M(D_{i,j}^{6})+1, M(D_{i,j}^{7})+1, M(D_{i,j}^{8})\}\notag\\
  &=N_{i,j}=2(i+j)-3~\text{for all}~i\geq 1.
\end{align*}
This establishes the equality (\ref{pf-claim-1}). Finally, if $i, j \geq 2$, then $i+j=c(L_{i,j})$. This completes the proof of Lemma \ref{lem-max-d-lij}.
\end{proof}

\begin{proof}[{\bf Proof of Theorem \ref{main-thm1}.}]
Let $i, j$ be given odd integers $\geq 3$, let $K_i$ and $K_j$ denote the $(2, i)$- and $(2,j)$-torus knot, respectively, and let $K_{i,j}$ be the connected sum of $K_i$ and $K_j$, i.e., $K_{i,j}=K_i \sharp K_j.$ Then it follows from Lemma \ref{lem-max-d-lij} that
\begin{equation}\label{eq1-pf-thm1}
\max\deg_z P_{W_2(K_{i,j})}(v,z)=2(i+j)-3=2c(K_{i,j})-3.
\end{equation}
For any given integer $m$, let $W_+(K_{i,j},m)$ be the $m$-twisted positive Whitehead double of $K_{i,j}$ and let $W_+(L_{i,j},m)$ be the canonical diagram of $W_\pm(K_{i,j},m)$ associated with the diagram $L_{i,j}$ in Fig.~\ref{con-2}. Since $c(K_{i,j}) \geq 6$, it follows from (\ref{eq1-pf-thm1}) and Proposition \ref{prop3-cr-nbr-cg-wd} that $\max\deg_z P_{W_+(K_{i,j},m)}(v,z) > 0$ and hence\\ $\max\deg_z P_{W_2(L_{i,j}, w(L_{i,j}))}(v,z) \not= 1$. By Proposition \ref{prop3-cr-nbr-cg-wd}, we have
\begin{align}
 \max\deg_z P_{W_+(K_{i,j},m)}(v,z)
 &=\max\deg_z P_{W_+(L_{i,j},m)}(v,z)\notag\\
 &=\max\deg_z P_{W_2(L_{i,j},m)}(v,z) +1\notag\\
 &=\max\deg_z P_{W_2(L_{i,j},w(L_{i,j}))}(v,z) +1\notag\\
 &=\max\deg_z P_{D_{i,j}}(v,z)+1\notag\\
 &=2c(K_{i,j})-3+1=2c(K_{i,j})-2.\label{eq2-pf-thm1}
\end{align}

\begin{figure}[ht]
\begin{center}
\resizebox{0.45\textwidth}{!}{%
  \includegraphics{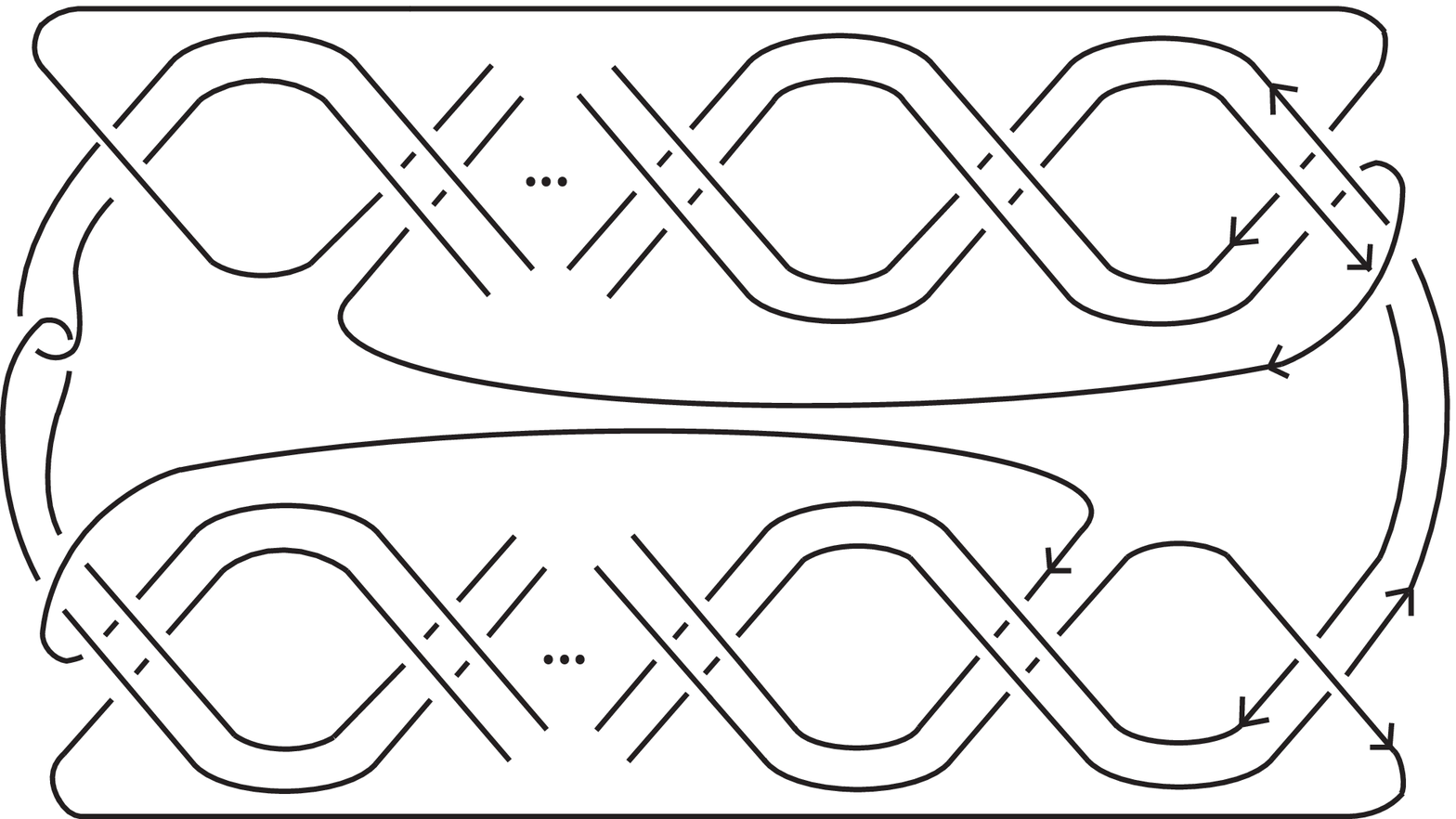} }
\caption{$D'$}\label{con-2-1}
\end{center}
\end{figure}

Now we deform the diagram $W_{+}(L_{i,j},w(L_{i,j}))$ to the diagram $D'$ as shown in Fig.~\ref{con-2-1} by using isotopy. So $g_c(W_+(K_{i,j},m)) \leq g(\Sigma(D'))$.
Observe that there are $2(i+j)+5$ Seifert circles in $D'$ that result from applying Seifert's algorithm to the diagram $D'$. Since $D'$ has $4(i+j)+2$ crossings, the genus $g(\Sigma(D'))$ of the resulting canonical Seifert surface $\Sigma(D')$ is given by
\begin{align}
g(\Sigma(D'))&=\frac{c(D')-s(D')+1}{2}=\frac{4(i+j)+2-(2(i+j)+5)+1}{2}\notag\\
&=i+j-1=c(K_{i,j})-1.\label{eq3-pf-thm1}
\end{align}

Finally, it follows from Proposition \ref{prop2-cr-nbr-cg-wd}, (\ref{eq2-pf-thm1}) and (\ref{eq3-pf-thm1}) that
\begin{align*}
c(K_{i,j})-1&=\frac{1}{2} \max\deg_z P_{W_+(K_{i,j},m)}(v,z) \leq g_c(W_+(K_{i,j},m))\notag\\ &\leq g(\Sigma(D'))=i+j-1=c(K_{i,j})-1.
\end{align*}
This gives $g_c(W_+(K_{i,j},m))=i+j-1=c(K_{i,j})-1$. By the same argument, we obtain $g_c(W_-(K_{i,j},m))=i+j-1=c(K_{i,j})-1$.
This completes the proof of Theorem \ref{main-thm1}.
\end{proof}

\begin{figure}
\begin{center}
\resizebox{0.65\textwidth}{!}{%
  \includegraphics{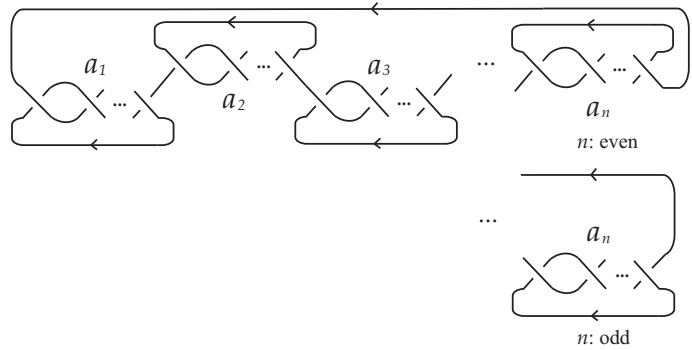} }
\caption{$D_n= D_{a_1,\ldots,a_n}$}\label{g-con-1}
\end{center}
\end{figure}

\begin{figure}
\begin{center}
\resizebox{0.75\textwidth}{!}{%
  \includegraphics{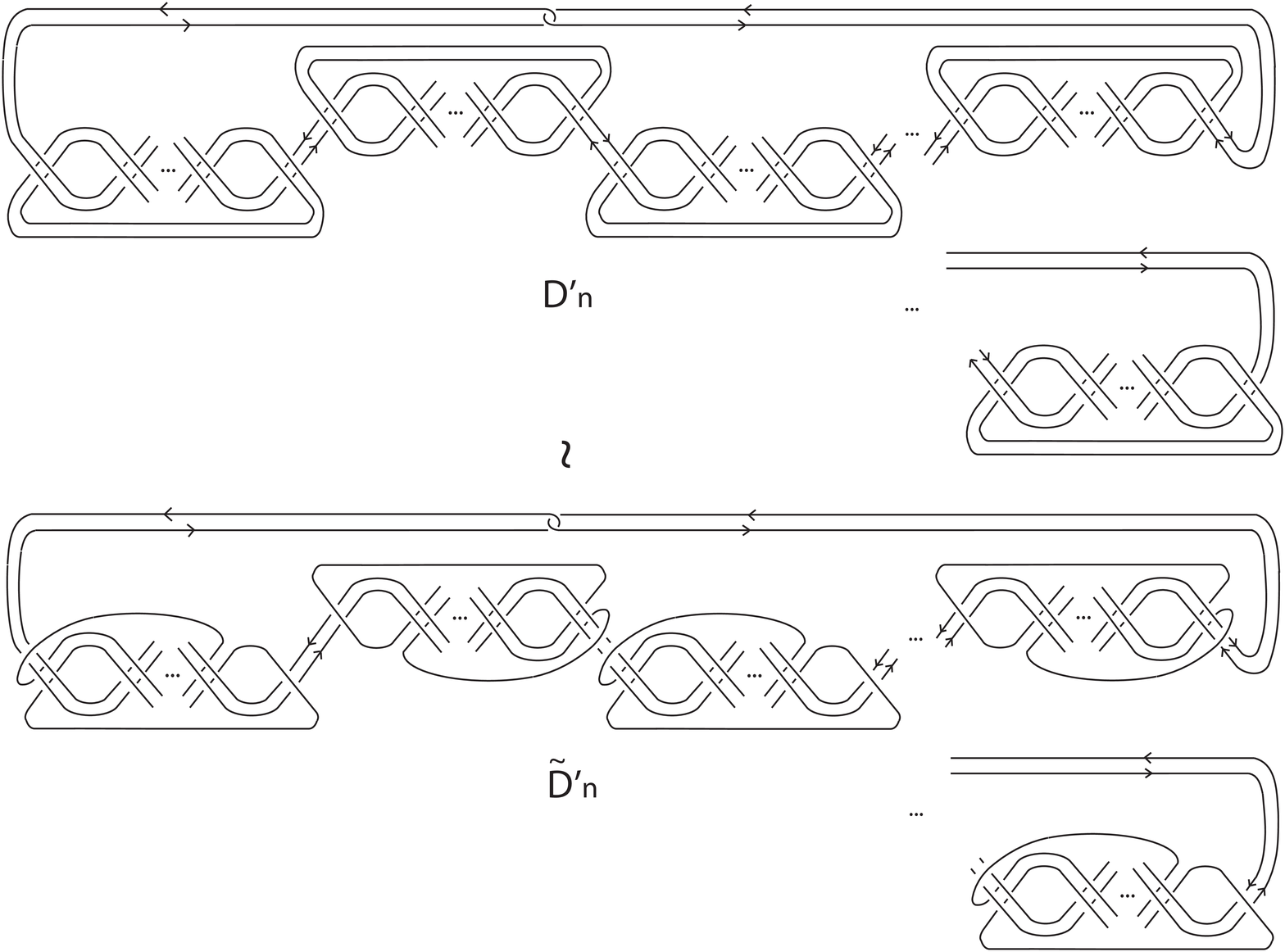} }
\caption{Two diagrams of $W_2(K_n)$}\label{g-con-2}
\end{center}
\end{figure}

\begin{remark}\label{conj-rmk-1}
(1) By a direct calculation, $\max\deg_z P_{W_2(L_{2,2})}(v,z)=2c(L_{2,2})-3=5,$ $\max\deg_z P_{W_2(L_{2,3})}(v,z)=2c(L_{2,3})-3=7,$ and $\max\deg_z P_{W_2(L_{3,3})}(v,z)=2c(L_{3,3})-3=9$.

(2) Let $a_1,\ldots,a_n (n\geq 2)$ be odd integers $\geq 3$ and let $K_{a_i} (1\leq i \leq n)$ be an oriented $(2, a_i)$-torus knot. Let $K_n$ denote an oriented alternating knot represented by $D_n= D_{a_1,\ldots,a_n}$ as shown in Fig.~{\ref{g-con-1}}, which is a diagram of the connected sum of $K_{a_1}, \ldots, K_{a_n}$. Let $D'_n=W_+(D_n, w(D_n))$ be the standard diagram of the $w(D_n)$-twisted positive Whitehead double of $K_n$ associated with $D_n$ as shown in the top of Fig.~\ref{g-con-2}, where $w(D_n)=a_1+ \cdots+a_n$, the writhe of $D_n$. Consider a diagram $\tilde D'_n$ obtained from $D'_n$ by isotopy deformations as illustrated in the bottom of Fig.~\ref{g-con-2}.
Then $\tilde D'_n$ have $2\sum_{k=1}^{n}a_{k}+2n+1$ Seifert circles and $4\sum_{k=1}^{n}a_{k}+2$ crossings and so the genus $g(\Sigma(\tilde D'_n))$ of the canonical Seifert surface $\Sigma(\tilde D'_n)$ associated to $\tilde D'_n$ is given by
\begin{align*}
g(\Sigma(\tilde D'_n))&=\frac{c(\tilde D'_n)-s(\tilde D'_n)+1}{2}\\
&=\frac{1}{2}\{4\sum_{k=1}^{n}a_{k}+2-(2\sum_{k=1}^{n}a_{k}+2n+1)+1\}\\
&=\sum_{k=1}^{n}a_{k}-(n-1)=c(K_n)-(n-1).
\end{align*}
Hence for any integer $m$, $g_c(W_+(K_n, m)) \leq g(\Sigma(\tilde D'_n))$ $=c(K_n)-(n-1)$. Therefore, Conjecture \ref{Nakam-conj-0} does not hold for any alternating knot which is obtained from the connected sum of a finite number of $(2, a_i)$-torus knots $K_{a_1}, \ldots, K_{a_n}$, where $a_i (1\leq i \leq n)$ is odd integers $\geq 3$ and $n \geq 2$.
\end{remark}


\section{Maximum $z$-degree of HOMFLYPT polynomials of doubled links of $\hat\gamma_p$}
\label{sect-miwd3-plrq}

In this section, we calculate the maximum degree in $z$ of the HOMFLYPT polynomials of the doubled links of alternating links obtained from alternating $3$-braid links $\hat\gamma_p (p \geq 2)$ with the orientation as shown in Fig.~\ref{fig-gamma-p} by repeatedly replacing a crossing with a full twist, where $\gamma_p$ is a $3$-braid of the form:
\begin{equation}\label{braid-beta}
\gamma_p=(\sigma_2^{\epsilon}\sigma_{1}^{-\epsilon})^p,~\text{where}~ \epsilon=\pm 1.
\end{equation}

\begin{remark}\label{prop-r-qtb}

(i) $\hat\gamma_2$ is the figure eight knot (see Fig.~\ref{fig-eight}).

(ii) $\hat\gamma_p (p\geq 2)$ is a non-split alternating link without nugatory crossings and so is a minimal crossing diagram. Hence it follows that the minimal crossing number $c(\hat\gamma_p)$ of $\hat\gamma_p$ is given by $
c(\hat\gamma_p)=2p.$

(iii) If $p=3k$ for some integer $k \geq 1$, then the closed braid $\hat\gamma_p$ is an oriented link of three components, otherwise it is always an oriented knot.

(iv) For each integer $p \geq 2$, $\gamma_p$ is a quasitoric braid of type $(3,p)$ \cite{Man}.
\end{remark}

\begin{figure}[h]
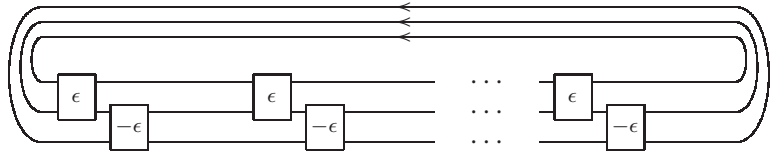

\vspace*{10pt}
\centerline{\xy
 (75,16);(80,16) **@{-}, (75,22);(80,22) **@{-},
 (75,16);(75,22) **@{-}, (80,16);(80,22) **@{-},
 (77.5,19)*{_{-\epsilon}},
 (68,20);(73,20) **@{-}, (68,26);(73,26) **@{-},
 (68,20);(68,26) **@{-}, (73,20);(73,26) **@{-},
 (70.5,23)*{_{\epsilon}},
 (66,17);(75,17) **@{-}, (80,17);(82,17) **@{-},
 (82,17);(92,17) **@{-},
 (66,21);(68,21) **@{-}, (73,21);(75,21) **@{-},
 (80,21);(92,21) **@{-},
 (66,25);(68,25) **@{-}, (73,25);(92,25) **@{-},
 (56,17);(61,17) **@{-},
 (49,16);(54,16) **@{-}, (49,22);(54,22) **@{-},
 (49,16);(49,22) **@{-}, (54,16);(54,22) **@{-},
 (51.5,19)*{_{-\epsilon}},
 (42,20);(47,20) **@{-}, (42,26);(47,26) **@{-},
 (42,20);(42,26) **@{-}, (47,20);(47,26) **@{-},
 (44.5,23)*{_{\epsilon}},
 (40,17);(49,17) **@{-}, (54,17);(56,17) **@{-},
 (61,17);(66,17) **@{-},
 (40,21);(42,21) **@{-}, (47,21);(49,21) **@{-},
 (54,21);(66,21) **@{-},
 (40,25);(42,25) **@{-}, (47,25);(66,25) **@{-},
 (122,17);(127,17) **@{-},
 (115,16);(120,16) **@{-}, (115,22);(120,22) **@{-},
 (115,16);(115,22) **@{-}, (120,16);(120,22) **@{-},
 (117.5,19)*{_{-\epsilon}},
 (108,20);(113,20) **@{-}, (108,26);(113,26) **@{-},
 (108,20);(108,26) **@{-}, (113,20);(113,26) **@{-},
 (110.5,23)*{_{\epsilon}},
 (106,17);(115,17) **@{-}, (120,17);(122,17) **@{-},
 (127,17);(132,17) **@{-},
 (106,21);(108,21) **@{-}, (113,21);(115,21) **@{-},
 (120,21);(132,21) **@{-},
 (106,25);(108,25) **@{-}, (113,25);(132,25) **@{-},
 (40,31);(132,31) **@{-}, (40,33);(132,33) **@{-},
 (40,35);(132,35) **@{-},
 (99,21)*{\cdots}, (99,25)*{\cdots},
 (99,17)*{\cdots},
 (40,25);(40,31) **\crv{(38,25)&(38,31)},
 (40,21);(40,33) **\crv{(36,21)&(36,33)},
 (40,17);(40,35) **\crv{(34,17)&(34,35)},
 (132,25);(132,31) **\crv{(134,25)&(134,31)},
 (132,21);(132,33) **\crv{(136,21)&(136,33)},
 (132,17);(132,35) **\crv{(138,17)&(138,35)},
 (87,31);(88,31) **@{} ?<*\dir{<},
 (87,35);(88,35) **@{} ?<*\dir{<},
 (87,33);(88,33) **@{} ?<*\dir{<},
 \endxy}
\caption{Closed alternating $3$-braid $\hat\gamma_{p}$}\label{fig-gamma-p}
\end{figure}
For a given oriented knot or link diagram $D$, let $W_2(D)$ denote the doubled link represented by the oriented link diagram obtained from $D$ as follows: Draw a parallel copy of $D$ pushed off $D$ to the left with respect to the orientation of $D$, and then orient the parallel copy in the opposite direction. Notice that if $D$ is a knot diagram, then $W_2(D)=W_2(D,w(D))$ described in the section 2, and if $D=D_1 \cup \cdots \cup D_m$ is a link diagram with $m$ components $D_1, \ldots, D_m$, then $W_2(D)=W_2(D_1 \cup \cdots \cup D_m)=W_2(D_1,w(D_1))\cup \cdots \cup W_2(D_m,w(D_m))$.

Now we consider the doubled link $W_2(\hat\gamma_p)$ of the alternating $3$-braid link $\hat\gamma_p$.
Notice that the link $W_2(\hat\gamma_p)$ has no full-twists of two parallel strands and each crossing of the closed braid diagram $\hat\gamma_p$ in Fig. \ref{fig-gamma-p} produces a tangle $T^{-\epsilon}_{i,j}$ as in Fig.~\ref{fig-gamma-p-db} in the standard diagram of $W_2(\hat\gamma_p)$ associated with $\hat\gamma_p$ according as $\epsilon=1$ or $\epsilon=-1$. The standard diagram of $W_2(\hat\gamma_p)$ is equivalent to the diagram shown in Fig.~\ref{tangle}.

\begin{figure}[t]
\vspace*{10pt}
\centerline{\xy
(0,0);(-18,20) **@{-} ?>*\dir{>}, (-6,0);(-24,20) **@{-} ?<*\dir{<},
(0,20);(-8,11) **@{-} ?<*\dir{<}, (-13,5.5);(-18,0) **@{-},
(-6,20);(-11,14.5) **@{-},  (-16,9);(-24,0) **@{-}  ?>*\dir{>},
(-12.8,12.4);(-14.3,10.8) **@{-},
(-11.3,7.3);(-9.5,9.3) **@{-},
(-11.5,-5) *{\epsilon=-1},
\endxy
\qquad\qquad\qquad
\xy
(0,0);(18,20) **@{-} ?<*\dir{<}, (6,0);(24,20) **@{-} ?>*\dir{>},
(0,20);(8,11) **@{-}, (13,5.5);(18,0) **@{-} ?>*\dir{>},
(6,20);(11,14.5) **@{-} ?<*\dir{<},  (16,9);(24,0) **@{-},
(12.8,12.4);(14.3,10.8) **@{-},
(11.3,7.3);(9.5,9.3) **@{-},
(11.5,-5) *{\epsilon=1},
\endxy}
\vspace*{10pt}\caption{$T^{-\epsilon}_{i,j}$}
\label{fig-gamma-p-db}
\end{figure}

\begin{figure}[ht]
\begin{center}
\resizebox{0.60\textwidth}{!}{%
  \includegraphics{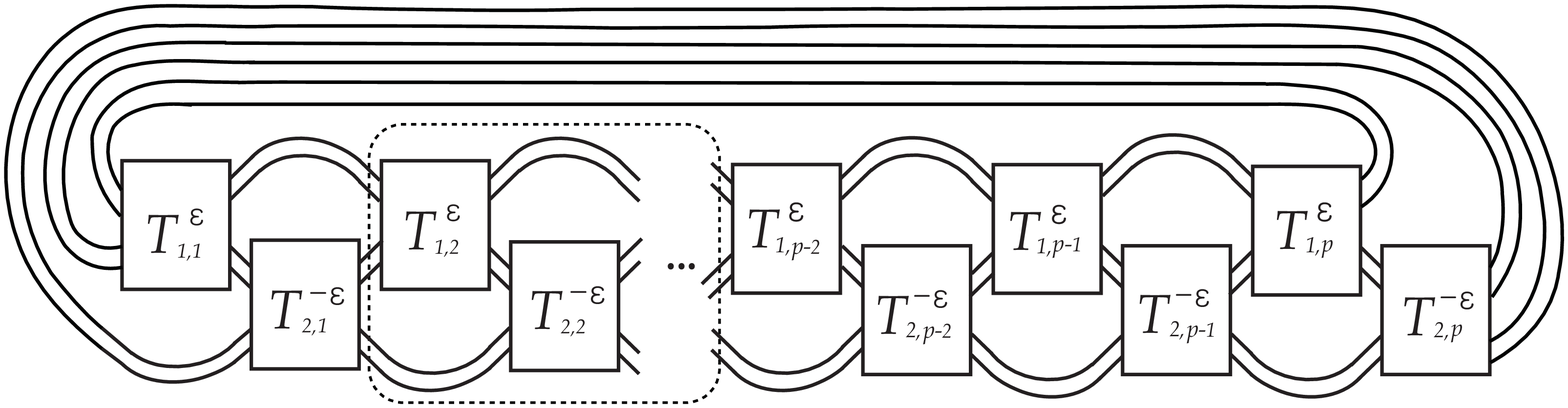} }
\caption{$W_{2}(\hat\gamma_{p})$}\label{tangle}
\end{center}
\end{figure}

For our convenience, we represent the standard diagram $W_{2}(\hat\gamma_{p})$ in Fig.~\ref{tangle} by the $2\times p$ matrix
\begin{equation*}
Q_p=\begin{pmatrix}
T_{1,1}^{\epsilon} & T_{1,2}^{\epsilon}&\cdots&T_{1,p-1}^{\epsilon}&T_{1,p}^{\epsilon}\\
T_{2,1}^{-\epsilon} & T_{2,2}^{-\epsilon}&\cdots&T_{2,p-1}^{-\epsilon}&T_{2,p}^{-\epsilon}
\end{pmatrix}.
\end{equation*}

In the case that $\epsilon=-1$, we will denote the diagram $W_{2}(\hat\gamma_{p})$ simply by $D_{p}$ and $N_{p}$ denote the integer given by
\begin{equation*}
N_{p}=c(D_{p})-s(D_{p})+1=8p-(4p+2)+1=4p-1~(p\geq 3).
\end{equation*}

In what follows, instead of the diagram $D_{p}$ illustrated in Fig.~\ref{tangle}, we use a shortcut diagram shown in Fig.~\ref{tangle1} for $D_{p}$ for the sake of simplicity.

\begin{figure}[ht]
\begin{center}
\resizebox{0.60\textwidth}{!}{%
  \includegraphics{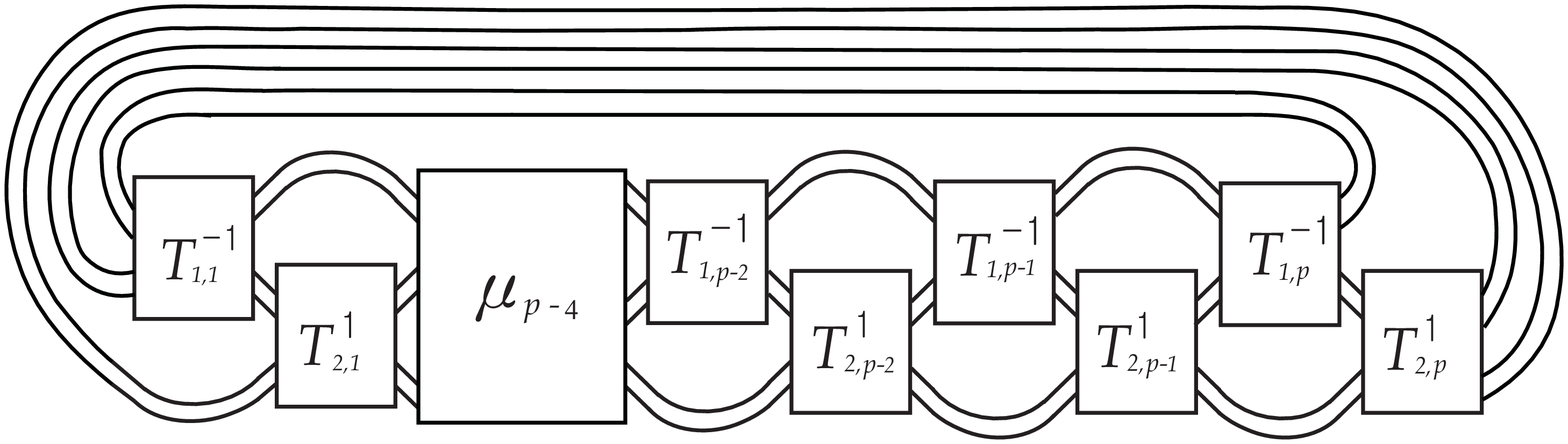} }
\caption{$D_{p}=W_{2}(\hat\gamma_{p})$ with $\epsilon=-1.$}\label{tangle1}
\end{center}
\end{figure}

\begin{example}\label{examp-p=2}
The closure $\hat\gamma_2$ of the $3$-braid $\gamma_2=(\sigma_2^{-1}\sigma_{1})(\sigma_2^{-1}\sigma_{1})$ is the figure-eight knot $4_{1}$ (see Fig. \ref{fig-eight}) and the doubled link $D_2=W_2(\hat\gamma_2)$ is represented by $2\times 2$ matrix
\[Q_{2}=\left(
\begin{array}{cc}
T^{-1}_{1,1}&T^{-1}_{1,2}\\
T^{1}_{2,1}&T^{1}_{2,2}
\end{array}
\right).
\]

\begin{figure}[ht]
\begin{center}
\resizebox{0.40\textwidth}{!}{%
  \includegraphics{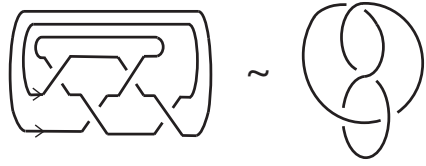} }
\caption{The figure-eight knot $\hat\gamma_2$}\label{fig-eight}
\end{center}
\end{figure}

By a direct computation, we obtain
\begin{align*}
P_{W_2(\hat\gamma_2)}(v,z)
&=z^{-1}(-v^{5}+3v^{3}-5v+5v^{-1}-3v^{-3}+v^{-5})\\
&+z(-2v^{5}+4v^{3}-4v^{-3}+2v^{-5})\\
&+z^{3}(-v^{5}+v^{3}+9v-9v^{-1}-v^{-3}+v^{-5})\\
&+z^{5}(6v-6v^{-1})+z^{7}(v-v^{-1}).
\end{align*}

Hence the maximal $z$-degree of the HOMFLYPT polynomial $P_{W_2(\hat\gamma_2)}(v,z)$ of the doubled link $D_2=W_2(\hat\gamma_2)$ is given
by \[\max\deg_z P_{W_2(\hat\gamma_2)}(v,z)=7=2\cdot 4-1=2c(\hat\gamma_2)-1.\]
On the other hand, let $\hat\gamma_2^*$ denote the mirror image of $\hat\gamma_2$. Then we also have
\begin{align*}
\max\deg_z P_{W_2(\hat\gamma_2^*)}(v,z)&=\max\deg_z P_{W_2(\hat\gamma_2)}(v^{-1},z)=7=2\cdot 4-1=2c(\hat\gamma_2^*)-1.
\end{align*}
\end{example}

Now we apply the partial skein tree in Fig.~\ref{tangle2} for the tangle $T_{2,p}^{1}$ in $D_{p}$ which is of the tangle in the left-hand side of Fig.~\ref{fig-gamma-p-db}. Let $D_{p}^{i}$ $(1\leq i \leq8)$ denote the link diagram represented by $2\times p$ matrix
\begin{equation*}
D_{p}^{i}=\begin{pmatrix}
T_{1,1}^{-1} & T_{1,2}^{-1}&\cdots&T_{1,p-1}^{-1}&T_{1,p}^{-1}\\
T_{2,1}^{1} & T_{2,2}^{1}&\cdots&T_{2,p-1}^{1}&T_{i}
\end{pmatrix}.
\end{equation*}

That is, $D_{p}^{i}$ is the link diagram obtained from the link diagram $D_{p}$ by replacing the tangle $T_{2,p}^{1}$
with the tangle $T_{i}$ as in (\ref{tang-assign}).
Hence two diagrams $D_{p}$ and $D_{p}^{i}$ are identical except for the tangle corresponding to the $(2,p)-$entry of the matrix notation. In these terminologies, we have the following Lemma~\ref{m-d-1} that will play an essential role in the proof of Lemma \ref{m-d-1-1} below.
\begin{lemma}\label{m-d-1}
For any integer $p\geq3$,
\begin{itemize}
\item [(1)] $\max\deg_{z}P_{D_{p}^{4}}(v,z)\leq N_{p}-3=4p-4$.
\item [(2)] $\max\deg_{z}P_{D_{p}^{5}}(v,z)\leq N_{p}-3=4p-4$.
\item [(3)] $\max\deg_{z}P_{D_{p}^{6}}(v,z)\leq N_{p}-3=4p-4$.
\item [(4)] $\max\deg_{z}P_{D_{p}^{7}}(v,z)\leq N_{p}-3=4p-4$.
\item [(5)] $\max\deg_{z}P_{D_{p}^{8}}(v,z)\leq N_{p}-4=4p-5$
\end{itemize}
\end{lemma}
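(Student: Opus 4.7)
The approach is to follow exactly the template already established in the proof of Lemma~\ref{lem-max-d-lij}, since each $D_p^i$ is obtained from $D_p=W_2(\hat\gamma_p)$ by replacing only the corner tangle $T_{2,p}^{1}$ with the same resolutions $F_1, F_2, F_3, F_4, G$ that appeared in Fig.~\ref{tangle2}. Three tools are available: (i) Morton's inequality (Theorem~\ref{thm-Morton-ineq}) applied to a carefully chosen diagram of $D_p^i$; (ii) planar isotopy in a neighbourhood of the replaced corner tangle, which can reduce the crossing count and/or increase the Seifert circle count before Morton is invoked; and (iii) a further partial skein tree applied to $D_p^i$ when the direct Morton bound is too weak, reducing to diagrams with fewer columns so that an induction on $p\geq 3$ closes the argument. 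The book\-keeping starts from $c(D_p)=8p$ and $s(D_p)=4p+2$; each of $F_1,\dots,F_4$ contains a single crossing while $G$ contains none, so every local move in the corner translates immediately into a net shift of $c-s+1$.

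For cases (1)--(3), that is $i=4,5,6$, the plan is to simplify $D_p^i$ by planar isotopy localized at the replaced corner tangle (the analogue of the passage from $D_{i,j}^4$ to $\tilde D_{i,j}^4$ in Fig.~\ref{tw2}) and then read off the net change in $c-s+1$. In most subcases the Morton bound $c(D_p^i)-s(D_p^i)+1\leq N_p-3$ should follow directly once the Reidemeister~I kinks produced by the smoothing are removed. If Morton falls short, I would decompose $D_p^i$ further by a secondary partial skein tree modeled on Figs.~\ref{tw3}--\ref{tw4}, expressing $P_{D_p^i}(v,z)$ in terms of the HOMFLYPT polynomials of diagrams that either have strictly fewer columns and are covered by the inductive hypothesis on $p$, or are of a type whose bound has already been established at an earlier stage of the induction. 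For case (5), $i=8$, the tangle $G$ is crossingless, so after eliminating the short arcs produced in the corner the diagram should deformation-retract onto $W_2(\hat\gamma_{p-2})$ together with at most a few trivial components (by analogy with equation (\ref{pf-d02-eq1})); combined with the inductive hypothesis, this should give $M(D_p^8)\leq N_p-4$.

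The main obstacle I anticipate is case (4), $i=7$, corresponding to the tangle $F_4$. In the proof of Lemma~\ref{lem-max-d-lij} the analogous diagram $D_{i,j}^7$ required the most delicate treatment (Figs.~\ref{tw5}--\ref{tw7}): two crossings of the $2$-cable had to be pushed by isotopy next to two further crossings, Reidemeister~II moves were then performed, and the resulting diagram had to be identified with an orientation-reversed copy of an already treated diagram before the degree bound could be invoked. Adapting this manoeuvre to $D_p^7$, where the outer pattern is a closed $3$-braid rather than a connected sum of torus links, will be the most delicate step: the parallel strands of the $2$-cable now travel through a genuinely $3$-stranded pattern, and one has to verify that the required push/Reidemeister sequence can be carried out globally without introducing new obstructions, so that after the reversal the resulting diagram is isotopic to $-D_{p-2}^{5}$ (or a close variant). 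I expect the careful verification of this isotopy, together with the matching of orientations on all remaining components, to absorb the bulk of the work in Lemma~\ref{m-d-1}, with the other four cases being relatively routine by comparison.
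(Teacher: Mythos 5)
Your toolbox is the right one (partial skein trees at the replaced corner tangle, Morton's inequality after local isotopy, and the ``push two crossings of the $2$-cable and reverse orientation'' manoeuvre from Figs.~\ref{tw5}--\ref{tw7}), but several of your specific structural guesses do not match what actually works, and two of them would derail the argument. First, the paper's proof of Lemma~\ref{m-d-1} uses \emph{no induction on $p$}: cases (1), (2) and (5) are handled for fixed $p$ by skein trees plus Morton (with (5) being a one-line direct Morton estimate on $D_p^8$, giving $(c(D_p)-8)-(s(D_p)-4)+1=N_p-4$), and cases (3) and (4) are handled by isotoping to $-D_p^4$ and $-D_p^5$ respectively --- orientation-reversed copies at the \emph{same} $p$, not at $p-2$. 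Your proposed target $-D_{p-2}^5$ for case (4) imports the indexing of Lemma~\ref{lem-max-d-lij}, where the corner tangle sits in a twist region and resolving it genuinely shortens the twist; here only a single corner tangle of the $3$-braid pattern is altered, so no column is removed and the entire induction-on-$p$ scaffolding is both unnecessary and unavailable. The same objection applies to your treatment of case (5): $D_p^8$ does not deformation-retract onto $W_2(\hat\gamma_{p-2})$ plus trivial components, and no such reduction is needed.

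Second, you misjudge which cases are delicate. Case (3) is \emph{not} a routine Morton estimate: it requires exactly the same push-and-reverse manoeuvre as case (4) (together with a case split on $p \bmod 3$, according to whether $\hat\gamma_p$ is a knot or a $3$-component link), reducing $D_p^6$ to $-D_p^4$ so that the bound follows from part (1). This forces the logical order (1),(2) before (3),(4), which your plan does not anticipate. Case (2) is also not a single Morton application: the skein expansion of $D_p^5$ produces a term $a_7$ whose direct Morton bound is too weak, and $a_7$ must itself be expanded by a second skein tree before Morton gives $M(a_7)\leq N_p-5$. Finally, your bookkeeping premise that ``each of $F_1,\dots,F_4$ contains a single crossing while $G$ contains none'' is not correct for the four-crossing doubled tangle $T_{2,p}^1$; the crossing and Seifert-circle counts that feed into Morton have to be read off from the isotoped diagrams ($a_2$, $a_4'$, $a_6$, $a_9$, $a_{10}$, and the simplified $D_p^8$), not from a uniform local count. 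In short: right tools, but the reduction targets, the induction, and the classification of easy versus hard cases all need to be corrected before this becomes a proof.
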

The proof of this lemma \ref{m-d-1} will be given in the final section \ref{sect-pf-lem1}.

\begin{lemma}\label{m-d-1-1}
Let $W_{2}(\hat\gamma_{p})$ be the doubled link of the closure $\hat\gamma_{p}$ of the alternating $3$-braid $\gamma_{p}=(\sigma_2^{\epsilon}\sigma_1^{-\epsilon})^p$ with $\epsilon=\pm 1$. Then
\begin{align}\label{m-d-2}
\max\deg_{z}P_{W_{2}(\hat\gamma_{p})}(v,z)&=2c(\hat\gamma_{p})-1~ (p \geq 2)
\end{align}
\end{lemma}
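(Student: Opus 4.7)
My plan is to induct on $p \geq 2$, with Example \ref{examp-p=2} supplying the base case. It suffices to treat $\epsilon = -1$: the braid with $\epsilon = +1$ is the mirror image, and $P_{L^\ast}(v, z) = P_L(v^{-1}, z)$ preserves the $z$-degree. Writing $D_p = W_2(\hat{\gamma}_p)$ for $\epsilon = -1$, Morton's inequality (Theorem \ref{thm-Morton-ineq}) gives the upper bound
\[
M(D_p) \leq c(D_p) - s(D_p) + 1 = 8p - (4p+2) + 1 = 4p - 1 =: N_p,
\]
so the task is to prove the matching lower bound $M(D_p) \geq N_p$ for $p \geq 3$.

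For the inductive step I apply the same partial skein tree of Fig.~\ref{tangle2} to the corner tangle $T^{1}_{2,p}$ of $D_p$, yielding a decomposition of the same shape as (\ref{tw-q-1}):
\[
P_{D_p} = \bigl(P_{D_p^1} + P_{D_p^2} - P_{D_p^3}\bigr) z^2 + \bigl(v P_{D_p^4} - v^{-1} P_{D_p^5} + v P_{D_p^6} - v P_{D_p^7}\bigr) z + P_{D_p^8},
\]
where $D_p^i$ is obtained from $D_p$ by substituting the tangle $T_i$ of (\ref{tang-assign}) for $T^{1}_{2,p}$. Parts (1)--(4) of Lemma \ref{m-d-1} bound the $z$-contributions from $D_p^4, \ldots, D_p^7$ by $N_p - 2$, and part (5) bounds the contribution of $P_{D_p^8}$ by $N_p - 4$. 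Direct inspection of $D_p^2$ and $D_p^3$, whose tangle substitutions either pinch off a disjoint trivial component or create a nugatory crossing, gives $M(D_p^i) \leq N_p - 3$, so their $z^2$-contributions are at most $N_p - 1$. Thus every summand other than $z^2 P_{D_p^1}$ has $z$-degree strictly below $N_p$.

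The crux, and the main obstacle, is the claim $M(D_p^1) = N_p - 2 = 4p - 3$ with a nonzero leading coefficient in $v$. I expect $D_p^1$, after isotopy, to be the doubled link $W_2(\tilde D)$ of a reduced $2$-bridge Conway diagram $\tilde D = C(a_1, \ldots, a_n)$ with all $a_i > 0$ and $\sum a_i = 2p - 1$, obtained from $\hat{\gamma}_p$ by smoothing the single crossing corresponding to $T^{1}_{2,p}$. Once this geometric identification is made, Proposition \ref{prop-Nakamura} immediately gives
\[
M(D_p^1) = 2 c(\tilde D) - 1 = 2(2p - 1) - 1 = 4p - 3,
\]
with nonzero leading coefficient. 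The technical heart of the proof is thus the isotopy recognising $D_p^1$ as such a positively alternating $2$-bridge doubled link, which will exploit the alternating $(\sigma_2 \sigma_1^{-1})$-pattern of $\hat{\gamma}_p$ to ensure the resolved diagram becomes a plumbing of positive half-twist bands. Granted this, the $z^{N_p}$-coefficient of $P_{D_p}$ equals the leading $v$-coefficient of $P_{D_p^1}$ and does not vanish, so no cancellation with the strictly lower-degree terms can occur. We conclude $M(D_p) \geq N_p$, completing the induction.
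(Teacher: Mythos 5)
Your overall architecture (induction on $p$, mirror-image reduction to $\epsilon=-1$, the Morton upper bound $N_p=4p-1$, the skein decomposition at $T^{1}_{2,p}$, and Lemma \ref{m-d-1} for $D_p^4,\dots,D_p^8$) matches the paper, but the two terms you dismiss or misidentify are precisely the two that carry the top degree, and both of your claims about them fail. First, $D_p^2$ does \emph{not} satisfy $M(D_p^2)\le N_p-3$: as shown in (\ref{d-g-3-1}), $D_p^2$ is isotopic to the doubled link of the $2$-bridge diagram $C(1,1,\dots,1)$ with $2p-1$ crossings, so Proposition \ref{prop-Nakamura} gives $M(D_p^2)=4p-3=N_p-2$ exactly. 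Hence $z^2P_{D_p^2}$ reaches degree $N_p$, the $z^{N_p}$-coefficient of $P_{D_p}$ is the sum of the top contributions of $P_{D_p^1}$ and $P_{D_p^2}$ (both entering with sign $+$), and your non-cancellation conclusion --- which assumes that coefficient equals the leading coefficient of $P_{D_p^1}$ alone --- does not follow as stated.

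Second, the step you yourself call the crux, identifying $D_p^1$ as the double of a positive $2$-bridge Conway diagram, is not merely unproved but false for general $p$. Resolving the crossing underlying $T^{1}_{2,p}$ turns $\hat\gamma_p$ into $\widehat{(\sigma_1\sigma_2^{-1})^{p-2}\sigma_1\sigma_2^{-2}}$, i.e.\ $\hat\gamma_{p-1}$ with one crossing replaced by a full twist (the paper's Fig.~\ref{tangle4}); for $p\ge 4$ this is not a $2$-bridge link. Concretely, for $p=4$ it is a reduced alternating $7$-crossing diagram whose determinant, computed from the reduced Burau representation at $t=-1$, equals $24$, whereas every $2$-bridge link of crossing number $7$ has determinant at most $F_8=21$ (the numerator of $[1,1,1,1,1,1,1]$). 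So Proposition \ref{prop-Nakamura} cannot be applied to $D_p^1$. The paper instead obtains $M(D_p^1)=4p-3$ from the induction hypothesis $M(W_2(\hat\gamma_{p-1}))=4p-5$ combined with Proposition \ref{cr-1}; note that your inductive step never actually invokes the induction hypothesis, which is a symptom of this misidentification. Repairing the argument requires both replacing the $2$-bridge identification of $D_p^1$ by the induction-plus-Proposition~\ref{cr-1} route and accounting for the equally large contribution of $D_p^2$.
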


\begin{proof} We prove the assertion (\ref{m-d-2}) by induction on $p$. If $p=2$, then $\gamma_2=(\sigma_2^{\epsilon}\sigma_1^{-\epsilon})^2$ whose closure is the figure eight knot and (\ref{m-d-2}) follows from Example \ref{examp-p=2}.

Now we assume that $p\geq3$ and (\ref{m-d-2}) holds for every integers $\leq p-1$. We consider two cases separately.

\smallskip

{\bf Case I.} $\epsilon=-1.$ In this case, we have $W_{2}(\hat\gamma_{p})=D_{p}$ by the notational convention above (see Fig.~\ref{tangle1}).

\smallskip

{\bf Claim.} $\max\deg_{z}P_{D_{p}}(v,z)=2c(\hat\gamma_{p})-1=4p-1.$

\smallskip

{\bf Proof of Claim.} From the skein relation for the HOMFLYPT polynomial and a partial skein tree for $T_{2,p}^{1}$ in Fig.~\ref{tangle2}, we obtain
\begin{align}\label{m-d-3}
&P_{D_{p}}(v,z) =(P_{D_{p}^{1}}(v,z)+ P_{D_{p}^{2}}(v,z)-P_{D_{p}^{3}}(v,z))z^{2}\\ &+
(vP_{D_{p}^{4}}(v,z)-v^{-1}P_{D_{p}^{5}}(v,z)+vP_{D_{p}^{6}}(v,z)-vP_{D_{p}^{7}}(v,z))z+P_{D_{p}^{8}}(v,z).\notag
\end{align}

Let $L'$ be the link represented by the standard braid diagram $\hat\gamma_{p-1}$, which is the closure of the alternating $3$-braid $\gamma_{p-1}=(\sigma_2^{-1}\sigma_1)^{p-1}$.
Then $L'$ is a non-split alternating link and so $c(L')=c(\hat\gamma_{p-1})=2(p-1)$. By induction hypothesis, we have
\begin{align}\label{m-d-4}
\max\deg_{z}P_{W_2(\hat\gamma_{p-1})}(v,z)=2c(\hat\gamma_{p-1})-1~(p\geq3).
\end{align}
Now let $D$ be the oriented link represented by the diagram obtained from the closed braid diagram $\hat\gamma_{p-1}$ by replacing a crossing in $\hat\gamma_{p-1}$ with a full-twist (so that $c(D)=c(\hat\gamma_{p-1})+1$) as illustrated in Fig.~\ref{tangle4}.

\begin{figure}[ht]
\begin{center}
\resizebox{0.60\textwidth}{!}{%
  \includegraphics{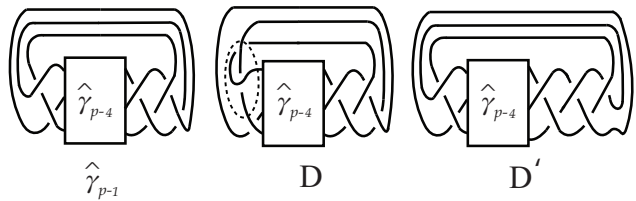} }
\caption{$\hat\gamma_{p-1}$ with a full-twist}\label{tangle4}
\end{center}
\end{figure}

By Proposition \ref{cr-1} and (\ref{m-d-4}), it follows that
\begin{align}\label{m-d-5}
\max\deg_{z}P_{W_{2}(D)}(v,z)&=2c(D)-1=\max\deg_{z}P_{W_{2}(\hat\gamma_{p-1})}(v,z)+2\notag\\
&=(2c(\hat\gamma_{p-1})-1)+2=4p-3~(p\geq3).
\end{align}

\begin{figure}[ht]
\begin{center}
\resizebox{0.35\textwidth}{!}{%
  \includegraphics{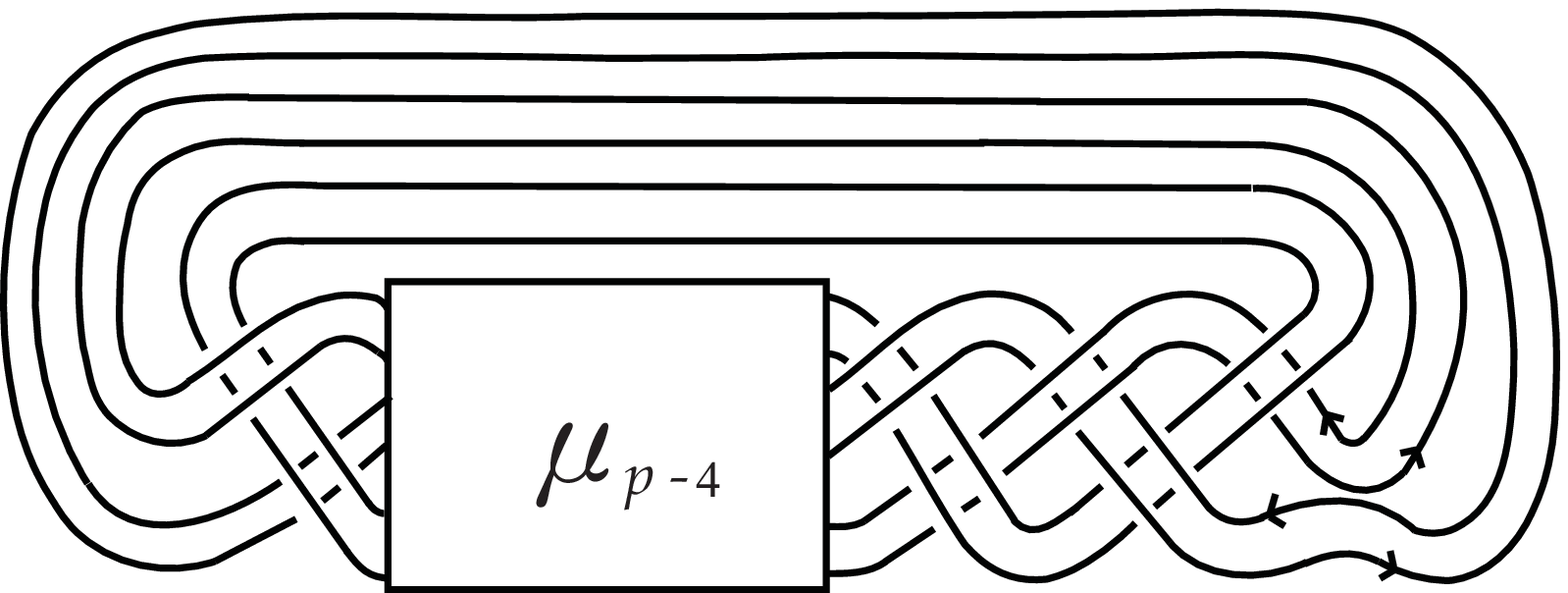} }
\caption{$D_{p}^{1}$}\label{tangle3}
\end{center}
\end{figure}

It is easily seen that the link diagram $D$ is isotopic to the link diagram $D^{'}$ in Fig.~\ref{tangle4}.
This shows that the link diagram $D_{p}^{1}$ (see Fig.~\ref{tangle3}) is just the doubled link diagram $W_2(D^{'})$. Hence we obtain from (\ref{m-d-5}) that
\begin{align}\label{m-d-5-1}
\max\deg_z P_{D_{p}^{1}}(v,z)&=\max\deg_z P_{W_2(D^{'})}(v,z)=\max\deg_z P_{W_2(D)}(v,z)\notag\\
&=4p-3~(p\geq3).
\end{align}

On the other hand, we observe that the link diagram $D_{p}^{2}$ is isotopic to the doubled link diagram in Fig.~\ref{tangle5}, which is precisely the doubled link diagram $W_2(D'')$, where $D''$ is the $2$-bridge link diagram of Conway normal form $C(1,1,\ldots,1,1,1,1,1)$.

\begin{figure}[ht]
\begin{center}
\resizebox{0.60\textwidth}{!}{%
  \includegraphics{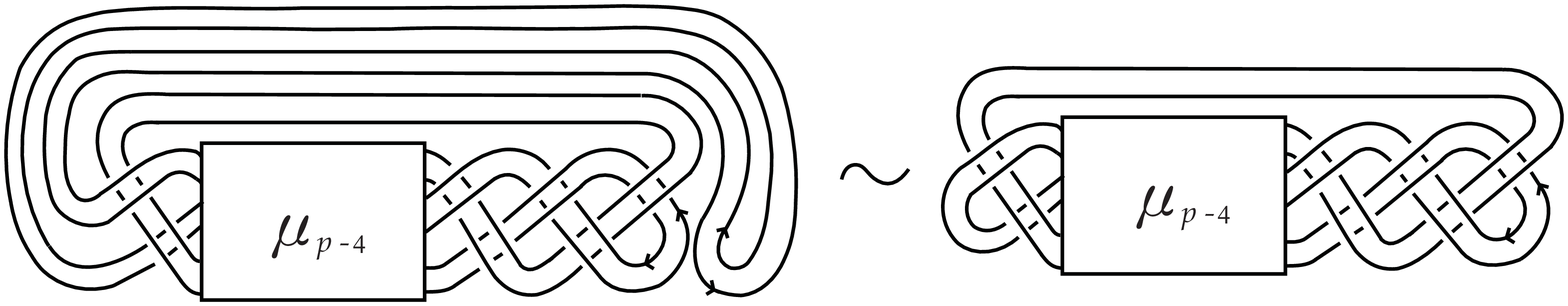} }
\caption{ $D_{p}^{2}$}\label{tangle5}
\end{center}
\end{figure}

Hence, by Proposition \ref{prop-Nakamura}, we have
\begin{align}\label{d-g-3-1}
\max\deg_{z}P_{D^{2}_{p}}(v,z)&=\max\deg_{z}P_{W_{2}(D'')}(v,z)
=2c(D'')-1=4p-3.
\end{align}

Since $\max\deg_z P_{D_{p}^3}(v,z)$ is too low to interfere with our main calculation by applying
Morton's inequality, we see that the maximum degree in $z$ for $P_{D_{p}^3}(v,z)$ does not contribute anything to $\max\deg_z P_{D_{p}}(v,z)$.
From (\ref{m-d-3}), (\ref{m-d-5-1}), (\ref{d-g-3-1}) and Lemma \ref{m-d-1}, we obtain that
\begin{equation*}
\max\deg_z P_{D_p}(v,z)= \max\{4p-1, 4p-3, 4p-5\}=4p-1=2c(\hat\gamma_{p})-1~(p\geq3).
\end{equation*}
This completes the proof of Claim.
Finally we obtain 
 \begin{equation*}
 \max\deg_{z}P_{W_{2}(\hat\gamma_{p})}(v,z)=
\max\deg_z P_{D_{p}}(v,z)=4p-1=2c(\hat\gamma_{p})-1.
\end{equation*}

\smallskip

{\bf Case II.} $\epsilon =1.$
It is easily seen that the corresponding link diagram $W_2(\hat\gamma_p)$ is just the mirror image of the diagram $D_p$ for which the assertion has already been established in the previous Case I. On the other hand, it is well known that if $L^*$ is the mirror image of an oriented link $L$, then  $P_{L^*}(v,z)=P_{L}(v^{-1},z)$. This fact implies that
$P_{W_2(\hat\gamma_p)}(v,z)=P_{D_p}(v^{-1},z)$. Hence
\begin{align*}
\max\deg_z P_{W_2(\hat\gamma_p)}(v,z)&=\max\deg_z P_{D_p}(v^{-1},z)=\max\deg_z P_{D_p}(v,z)=2c(\hat\gamma_p)-1.
\end{align*} 
This completes the proof of Lemma \ref{m-d-1-1}.
\end{proof}

Using Lemma \ref{m-d-1-1} and Proposition \ref{cr-1}, we obtain the following theorem which plays an important role in the proof of Theorem \ref{main-thm2} and Theorem \ref{main-thm3} of the next section \ref{sect-2-braid}.

\begin{theorem}\label{main-cor-1}
Let $\gamma_p (p \geq 2)$ be the alternating $3$-braid in (\ref{braid-beta}). If $L$ is a link having a diagram $D$ obtained from the closed braid diagram $\hat\gamma_p$ as shown in Fig.~\ref{fig-gamma-p} by replacing a crossing with a full twist (so that $c(D)=c(\hat\gamma_p)+1$), then \[\max\deg_z P_{W_2(D)}(v,z)=2c(D)-1.\]
\end{theorem}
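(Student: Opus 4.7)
The plan is to deduce Theorem \ref{main-cor-1} as an essentially immediate corollary of Lemma \ref{m-d-1-1} together with the Brittenham--Jensen replacement principle recorded in Proposition \ref{cr-1}. I would apply Proposition \ref{cr-1} with $L' = \hat\gamma_p$ and $D' = \hat\gamma_p$ (the closed braid diagram shown in Fig.~\ref{fig-gamma-p}), and $D$ the diagram produced from $D'$ by replacing one crossing with a full twist.

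First I would verify the three hypotheses of Proposition \ref{cr-1} for $L'=\hat\gamma_p$, $D'=\hat\gamma_p$. By Remark \ref{prop-r-qtb}(ii), $\hat\gamma_p$ is a non-split alternating link without nugatory crossings, hence the given closed braid diagram is a minimal crossing diagram, so $c(D') = c(L') = 2p$. Next, Lemma \ref{m-d-1-1} (just proved) gives
\begin{equation*}
\max\deg_z P_{W_2(D')}(v,z) = \max\deg_z P_{W_2(\hat\gamma_p)}(v,z) = 2c(\hat\gamma_p) - 1 = 2c(D') - 1.
\end{equation*}
Thus every hypothesis of Proposition \ref{cr-1} is satisfied.

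Applying Proposition \ref{cr-1} to the diagram $D$ obtained from $D'$ by replacing one crossing by a full twist (so $c(D) = c(D') + 1 = 2p + 1$) then yields
\begin{equation*}
\max\deg_z P_{W_2(D)}(v,z) = 2c(D) - 1 = \max\deg_z P_{W_2(D')}(v,z) + 2,
\end{equation*}
which is the desired conclusion.

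There is no real obstacle here, since all the substantive work has been done in Lemma \ref{m-d-1-1} (whose proof via Case I and Case II relies on the skein analysis around the tangle $T_{2,p}^{1}$ and on the key Lemma \ref{m-d-1} to be proved in Section \ref{sect-pf-lem1}). The only point that deserves a brief mention in the write-up is that the hypothesis of Proposition \ref{cr-1} requires both non-splitness of $L'$ and the minimality $c(D') = c(L')$ of the starting diagram, both of which are delivered by Remark \ref{prop-r-qtb}(ii); after that the theorem is an immediate one-line application.
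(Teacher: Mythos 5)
Your proof is correct and follows exactly the same route as the paper: the authors likewise take $L'=\hat\gamma_p$ with $D'=\hat\gamma_p$, note non-splitness and $c(L')=c(D')$, invoke Lemma \ref{m-d-1-1} for $\max\deg_z P_{W_2(D')}(v,z)=2c(D')-1$, and conclude by Proposition \ref{cr-1}. Your explicit verification of the hypotheses via Remark \ref{prop-r-qtb}(ii) is just a slightly more careful write-up of the same one-line argument.
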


\begin{proof}
Let $L'$ be the link represented by $\gamma_p$. It is obvious that $L'$ is a non-split alternating link with an alternating diagram $D'=\hat\gamma_p$ satisfying $c(L')=c(D')$. By Lemma \ref{m-d-1-1}, $\max\deg_z P_{W_2(D')}(v,z)=2c(D')-1$. Hence the assertion follows from Proposition \ref{cr-1}.
\end{proof}


\section{Proof of Theorems \ref{main-thm2} and \ref{main-thm3}}
\label{sect-2-braid}

\begin{proof}[{\bf Proof of Theorem \ref{main-thm2}.}]
Let $K$ be an alternating knot of braid index $3$, which is not the connected sum of two $(2, k)$-torus knots. By Theorem \ref{ch-alk-b3}, either $K$ is an alternating $3$-braid knot or a pretzel knot $\mathcal P(1,p,q,r)$ with $p,q,r \geq 1$.

First, if $K=\mathcal P(1,p,q,r)$, then it follows from \cite[Theorem 1]{BJ} that $g_c(W_{\pm}(K,m))$ $=1+p+q+r=c(K)$, establishing the assertion.

Now we assume that $K$ is an alternating $3$-braid knot. Then it is the closure $\hat\beta$ of an alternating $3$-braid:
\begin{equation*}
\beta=\sigma_1^{a_1}\sigma_2^{-b_1}\sigma_1^{a_2}\sigma_2^{-b_2}\sigma_1^{a_3}\cdots \sigma_2^{-b_{p-1}}\sigma_1^{a_p}\sigma_2^{-b_p} \in B_3,
\end{equation*}
where $p, a_i$ and $b_i$ are positive integers. Let $\eta=\sigma_1^{-a_1}\beta\sigma_1^{a_1}$. Then $K=\hat\beta=\hat\eta$ and
\begin{equation*}
\eta=\sigma_1^{-a_1}\beta\sigma_1^{a_1}=\sigma_2^{-b_1}\sigma_1^{a_2}\sigma_2^{-b_2}\sigma_1^{a_3}\cdots \sigma_2^{-b_{p-1}}\sigma_1^{a_p}\sigma_2^{-b_p}\sigma_1^{a_1}.
\end{equation*}
On the other hand, it is easily seen that the usual closed $3$-braid diagram $\hat\eta$ is obtained from the closed braid diagram $\hat\gamma_p$, where $\gamma_p=(\sigma_2^{-1}\sigma_1)^p$, by repeatedly replacing half-twists corresponding to the braid generators $\sigma_1$ and $\sigma_2^{-1}$ with full twists. Hence, by the corresponding repeated application of Theorem \ref{main-cor-1}, we obtain
\begin{equation}\label{pf-eqn1-thm13}
\max\deg_z P_{W_2(\hat\eta)}(v,z)=2c(\hat\eta)-1.
\end{equation}
It should be noted here that since at every stage the process of producing full twists builds an alternating connected diagram with no nugatory crossings, it follows that the underlying link is always a non-split alternating link diagram at every stage \cite{Men}.

Now, for any given integer $m$, let $W_\pm(K,m)$ be the $m$-twisted positive/negative Whitehead double of $K$ and let $W_\pm(\hat\eta,m)$ be the canonical diagram for $W_\pm(K,m)$ associated with the closed braid diagram $\hat\eta$. Since $c(\hat\eta) \geq 2p$, it follows from (\ref{pf-eqn1-thm13}) and Proposition \ref{prop3-cr-nbr-cg-wd} that $\max\deg_z P_{W_\pm(K,m)}(v,z) > 0$ and so $\max\deg_z P_{W_2(\hat\eta, w(\hat\eta))}(v,z)$ $\not= 1$. By Proposition \ref{prop3-cr-nbr-cg-wd}, we have
\begin{align}
 \max\deg_z P_{W_\pm(K,m)}(v,z)
 &=\max\deg_z P_{W_\pm(\hat\eta,m)}(v,z)\notag\\
 &=\max\deg_z P_{W_2(\hat\eta,m)}(v,z) +1\notag\\
 &=\max\deg_z P_{W_2(\hat\eta,w(\hat\eta))}(v,z) +1\notag\\
 &=\max\deg_z P_{W_2(\hat\eta)}(v,z)+1\notag\\
 &=2c(\hat\eta)-1+1=2c(K).\label{pf-eqn2-thm13}
\end{align}

Thus it follows from Proposition \ref{prop2-cr-nbr-cg-wd} and (\ref{pf-eqn2-thm13}) that
\begin{align*}
c(K)&=\frac{1}{2} \max\deg_z P_{W_\pm(K,m)}(v,z) \leq g_c(W_\pm(K,m))\notag\\ &\leq g_c(W_\pm(\hat\eta,m))=c(K).
\end{align*}
This gives $g_c(W_\pm(K,m))=c(K)$.

Finally, in the case that $K$ is the closure of the mirror image $\beta^*$ of the braid $\beta$, the same argument with $\gamma_p^*=(\sigma_2\sigma_1^{-1})^p$ gives $g_c(W_\pm(K,m))=c(K)$.
This competes the proof of Theorem \ref{main-thm2}.
\end{proof}

\bigskip

\begin{proof}[{\bf Proof of Theorem \ref{main-thm3}.}] Let $K$ be an alternating knot in $\overline{\mathcal K}_p$. Then $K$ has a diagram $D$ which is obtained from the diagram of the closed $3$-braid $\hat\gamma_p$ by repeatedly replacing a crossing by a full twist. By Theorem \ref{main-cor-1} and repeated application of Proposition \ref{cr-1}, we obtain
\begin{equation*}
\max\deg_z P_{W_2(D)}(v,z)=2c(D)-1.
\end{equation*}
Now, for any given integer $m$, let $W_\pm(K,m)$ be the $m$-twisted positive/negative Whitehead double of $K$ and let $W_\pm(D,m)$ be the canonical diagram for $W_\pm(K,m)$ associated with $D$. By the same argument as in the proof of Theorem \ref{main-thm2}, we obtain
\begin{align*}
 \max\deg_z P_{W_\pm(K,m)}(v,z)=2c(K)
\end{align*}
and therefore $g_c(W_\pm(K,m))=c(K)$. This competes the proof of Theorem \ref{main-thm3}.
\end{proof}


\begin{example}\label{example}
Let $A=(n_{i,j})_{1\leq i \leq 2; 1 \leq j \leq p}$ be an arbitrary given $2\times p$ integral matrix with $n_{ij}>0$, i.e.,
\[A=\left(
\begin{array}{cccc}
n_{1,1}&n_{1,2}&\cdots & n_{1,p}\\
n_{2,1}&n_{2,2}&\cdots & n_{2,p}\\
\end{array}
\right).\]
Let $K_A$ denote an oriented link in $S^3$ having a diagram $D_A$ in which each tangle labeled a non-zero integer $n_{i,j}$ denotes a vertical $n_{i,j}$ half-twists as shown in Fig.~\ref{fig6.1}. Then $K_A$ is obtained from the diagram of the closed $3$-braid $\hat\gamma_p=(\sigma_2^{-1}\sigma_1)^p$ by repeatedly replacing a crossing by a full twist and so $K_A \in \overline{\mathcal K}_p$.  Hence we obtain from Theorem \ref{main-thm3} that for any integer $m$,
\begin{equation*}
\max\deg_z P_{W_\pm(K_A, m)}(v,z)=2c(K_A)=2\sum_{i=1}^2\sum_{j=1}^p|n_{i,j}|
\end{equation*}
and \[g_c(W_\pm(K_A,m))=c(K_A)=\sum_{i=1}^2\sum_{j=1}^p|n_{i,j}|.\]
In particular, if all $n_{ij}$ are odd, then it follows from \cite[Theorem 12]{LS} that the braid index $b(K_A)$ of $K_A$ is given by
\[b(K_A)=\frac{1}{2}{\rm span}_vP_{K_A}(v,z)+1=3+\sum_{j=1}^p\frac{n_{1j}+n_{2j}-2}{2}.\]
Therefore the class $\overline{\mathcal K}_p$ in Theorem \ref{main-thm3} contains alternating knots with arbitrary large braid index $\geq 3$.
\end{example}

\begin{figure}[t]
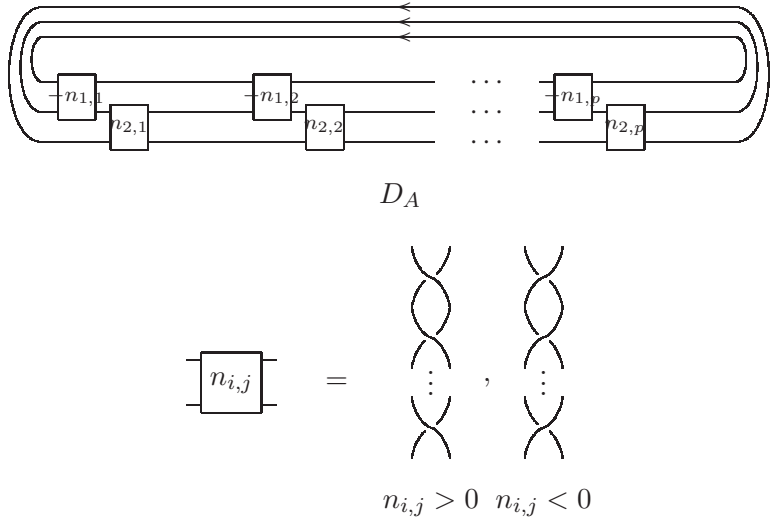

\vspace*{10pt} \centerline{\xy
 (75,16);(80,16) **@{-}, (75,22);(80,22) **@{-},
 (75,16);(75,22) **@{-}, (80,16);(80,22) **@{-},
 (77.5,19)*{_{n_{2,2}}},
 (68,20);(73,20) **@{-}, (68,26);(73,26) **@{-},
 (68,20);(68,26) **@{-}, (73,20);(73,26) **@{-},
 (70.5,23)*{_{-n_{1,2}}},
 (66,17);(75,17) **@{-}, (80,17);(82,17) **@{-},
 (82,17);(92,17) **@{-},
 (66,21);(68,21) **@{-}, (73,21);(75,21) **@{-},
 (80,21);(92,21) **@{-},
 (66,25);(68,25) **@{-}, (73,25);(92,25) **@{-},
 (56,17);(61,17) **@{-},
 (49,16);(54,16) **@{-}, (49,22);(54,22) **@{-},
 (49,16);(49,22) **@{-}, (54,16);(54,22) **@{-},
 (51.5,19)*{_{n_{2,1}}},
 (42,20);(47,20) **@{-}, (42,26);(47,26) **@{-},
 (42,20);(42,26) **@{-}, (47,20);(47,26) **@{-},
 (44.5,23)*{_{-n_{1,1}}},
 (40,17);(49,17) **@{-}, (54,17);(56,17) **@{-},
 (61,17);(66,17) **@{-},
 (40,21);(42,21) **@{-}, (47,21);(49,21) **@{-},
 (54,21);(66,21) **@{-},
 (40,25);(42,25) **@{-}, (47,25);(66,25) **@{-},
 (122,17);(127,17) **@{-},
 (115,16);(120,16) **@{-}, (115,22);(120,22) **@{-},
 (115,16);(115,22) **@{-}, (120,16);(120,22) **@{-},
 (117.5,19)*{_{n_{2,p}}},
 (108,20);(113,20) **@{-}, (108,26);(113,26) **@{-},
 (108,20);(108,26) **@{-}, (113,20);(113,26) **@{-},
 (110.5,23)*{_{-n_{1,p}}},
 (106,17);(115,17) **@{-}, (120,17);(122,17) **@{-},
 (127,17);(132,17) **@{-},
 (106,21);(108,21) **@{-}, (113,21);(115,21) **@{-},
 (120,21);(132,21) **@{-},
 (106,25);(108,25) **@{-}, (113,25);(132,25) **@{-},
 (40,31);(132,31) **@{-}, 
 (40,33);(132,33) **@{-},
 (40,35);(132,35) **@{-},
 (99,21)*{\cdots}, (99,25)*{\cdots},
 (99,17)*{\cdots},
 (40,25);(40,31) **\crv{(38,25)&(38,31)},
 (40,21);(40,33) **\crv{(36,21)&(36,33)},
 (40,17);(40,35) **\crv{(34,17)&(34,35)},
 (132,25);(132,31) **\crv{(134,25)&(134,31)},
 (132,21);(132,33) **\crv{(136,21)&(136,33)},
 (132,17);(132,35) **\crv{(138,17)&(138,35)},
 (87,31);(88,31) **@{} ?<*\dir{<},
 (87,35);(88,35) **@{} ?<*\dir{<},
 (87,33);(88,33) **@{} ?<*\dir{<}, (87.5,10)*{D_A},
 \endxy}
 \vskip 0.5cm
 \centerline{\xy (52,19);(60,19) **@{-},
(52,27);(60,27) **@{-}, (52,19);(52,27) **@{-}, (60,19);(60,27)
**@{-}, (50,20);(52,20) **@{-}, (50,26);(52,26) **@{-},
(60,20);(62,20) **@{-}, (60,26);(62,26) **@{-}, (56,23)*{n_{i,j}},
(70,23)*{=},
 (80,41);(82.5,37) **\crv{(80,40)&(81,37.5)}, (85,33);(82.5,37) **\crv{(85,34)&(84,36.5)},
 (85,41);(83.3,37.5) **\crv{(85,40)&(84,38)}, (80,33);(81.7,36.5) **\crv{(80,34)&(81,36)},
 (80,33);(82.5,29) **\crv{(80,32)&(81,29.5)}, (85,25);(82.5,29) **\crv{(85,26)&(84,28.5)},
 (85,33);(83.3,29.5) **\crv{(85,32)&(84,30)}, (80,25);(81.7,28.5) **\crv{(80,26)&(81,28)},
 (80,21);(82.5,17) **\crv{(80,20)&(81,17.5)}, (85,13);(82.5,17) **\crv{(85,14)&(84,16.5)},
 (85,21);(83.3,17.5) **\crv{(85,20)&(84,18)}, (80,13);(81.7,16.5) **\crv{(80,14)&(81,16)},
 (82.5,24)*{\vdots}, (82.5,7)*{n_{i,j} > 0},
 (90,23)*{,},
 (100,41);(97.5,37) **\crv{(100,40)&(99,37.5)}, (95,33);(97.5,37) **\crv{(95,34)&(96,36.5)},
 (95,41);(96.7,37.5) **\crv{(95,40)&(96,38)}, (100,33);(98.3,36.5) **\crv{(100,34)&(99,36)},
 (100,33);(97.5,29) **\crv{(100,32)&(99,29.5)}, (95,25);(97.5,29) **\crv{(95,26)&(96,28.5)},
 (95,33);(96.7,29.5) **\crv{(95,32)&(96,30)}, (100,25);(98.3,28.5) **\crv{(100,26)&(99,28)},
 (100,21);(97.5,17) **\crv{(100,20)&(99,17.5)}, (95,13);(97.5,17) **\crv{(95,14)&(96,16.5)},
 (95,21);(96.7,17.5) **\crv{(95,20)&(96,18)}, (100,13);(98.3,16.5) **\crv{(100,14)&(99,16)},
 (97.5,24)*{\vdots}, (97.5,7)*{n_{i,j} < 0}, 
\endxy
 }
\vspace*{5pt}\caption{Diagram $D_A$ of $K_A$}\label{fig6.1}
\end{figure}


\section{Proof of Lemma \ref{m-d-1}} \label{sect-pf-lem1}

In this final section, we prove Lemma \ref{m-d-1}. For this purpose, we first remind the reader Lemma \ref{m-d-1}. Recall that $D_p$ denotes the doubled link $W_2(\hat\gamma_p)$ corresponding to the matrix notation $Q_p$ with $\epsilon=-1$ and $D_{p}^i$ ($4 \leq i \leq 8$) denotes the link diagram obtained from $D_p$ by replacing $T^{1}_{2,p}$ with $T_i$, where $T_4=F_1, T_5=F_2, T_6=F_3, T_7=F_4, T_8=G$ (see Section \ref{sect-miwd3-plrq}).

\vskip 0.3cm

\noindent{\bf Lemma \ref{m-d-1}.}
For any integer $p\geq3$,
\begin{itemize}
\item [(1)] $\max\deg_{z}P_{D_{p}^{4}}(v,z)\leq N_{p}-3=4p-4$.
\item [(2)] $\max\deg_{z}P_{D_{p}^{5}}(v,z)\leq N_{p}-3=4p-4$.
\item [(3)] $\max\deg_{z}P_{D_{p}^{6}}(v,z)\leq N_{p}-3=4p-4$.
\item [(4)] $\max\deg_{z}P_{D_{p}^{7}}(v,z)\leq N_{p}-3=4p-4$.
\item [(5)] $\max\deg_{z}P_{D_{p}^{8}}(v,z)\leq N_{p}-4=4p-5$
\end{itemize}

\begin{proof} (1) Consider a partial skein tree for $D_{p}^{4}~ (p\geq 3)$ and isotopy deformations as shown in Fig.~\ref{tangle8}, which yields the identity:
\begin{equation*}
P_{D_{p}^{4}}(v,z)=-v^{-1}zP_{a_{1}}(v,z)+v^{-2}P_{a_{2}}(v,z).
\end{equation*}

\begin{figure}[ht]
\begin{center}
\resizebox{0.75\textwidth}{!}{%
  \includegraphics{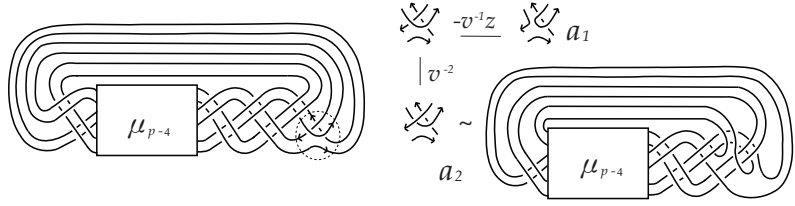} }
\caption{A partial skein tree for $D_{p}^{4}$}
\label{tangle8}
\end{center}
\end{figure}

\begin{figure}[ht]
\begin{center}
\resizebox{0.50\textwidth}{!}{%
  \includegraphics{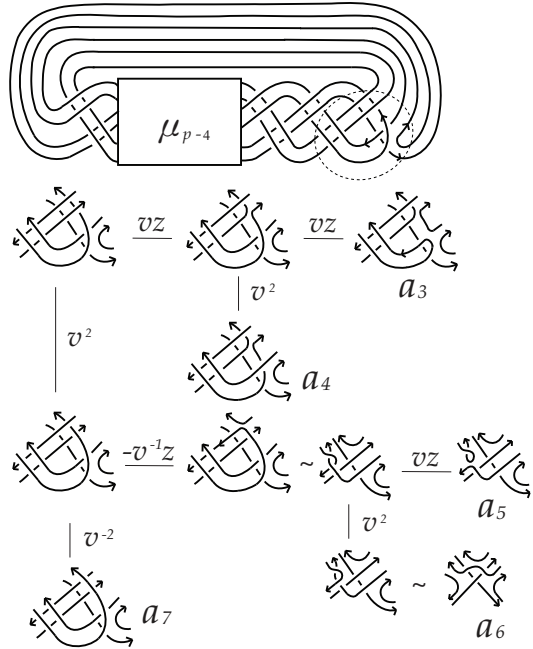} }
\caption{A partial skein tree for $D_{p}^{5}$}\label{tangle9}
\end{center}
\end{figure}

It is clear from Fig.~\ref{tangle8} that the link $a_{1}$ does not contribute anything to\\ $\max\deg_zP_{D_{p}^{4}}(v,z)$. For the links $a_2$, it follows from Morton's inequality that
\begin{align*}
\max\deg_{z}P_{a_{2}}(v,z)&\leq c(a_{2})-s(a_{2})+1\notag\\
&\leq (c(D_{p})-4)-(s(D_{p})-1)+1=N_{p}-3.
\end{align*}
This completes the proof of (1).

\bigskip

(2) From a partial skein tree for $D_{p}^{5}$ as shown in Fig.~\ref{tangle9}, we get
\begin{align}\label{d-g-8}
P_{D_{p}^{5}}(v,z)&=v^{2}z^{2}P_{a_{3}}(v,z)+v^{3}zP_{a_{4}}(v,z)-v^{2}z^{2}P_{a_{5}}(v,z)\notag\\
&~~~~-v^{3}zP_{a_{6}}(v,z)+P_{a_{7}}(v,z).
\end{align}
It is quite easy to see that the link $a_{3}$ and $a_{5}$ do not contribute anything to $\max\deg_zP_{D_{p}^{5}}(v,z)$. Let $a'_4$ be a diagram obtained from $a_4$ by isotopy as illisutrated in Fig.~\ref{fig-a4-1}. Then, by Morton's inequality, we obtain
\begin{align}\label{d-g-9}
\max\deg_{z}P_{a_{4}}(v,z)&\leq c(a'_{4})-s(a'_{4})+1\notag\\
&\leq (c(D_{p})-6)-(s(D_{p})-2)+1=N_{p}-4.
\end{align}

\begin{figure}[ht]
\begin{center}
\resizebox{0.80\textwidth}{!}{%
  \includegraphics{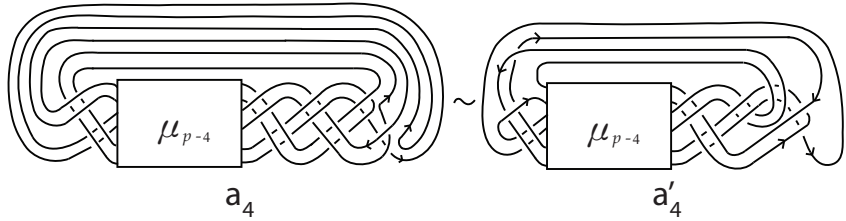} }
\caption{Another diagram $a'_4$ of $a_{4}$}\label{fig-a4-1}
\end{center}
\end{figure}

\begin{figure}[ht]
\begin{center}
\resizebox{0.45\textwidth}{!}{%
  \includegraphics{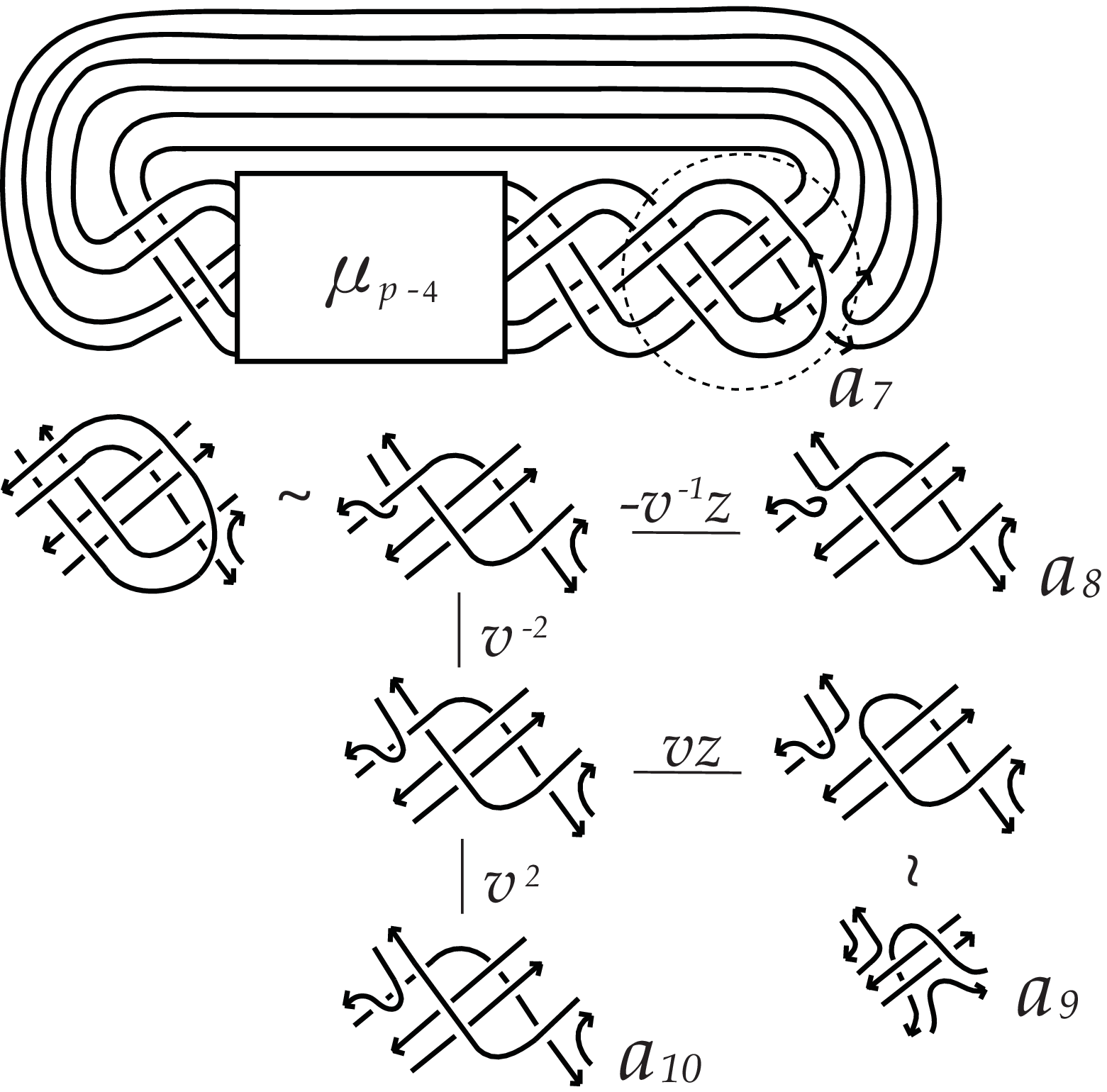} }
\caption{A partial skein tree for $a_{7}$}\label{tangle10}
\end{center}
\end{figure}

\begin{figure}[ht]
\begin{center}
\resizebox{0.40\textwidth}{!}{%
  \includegraphics{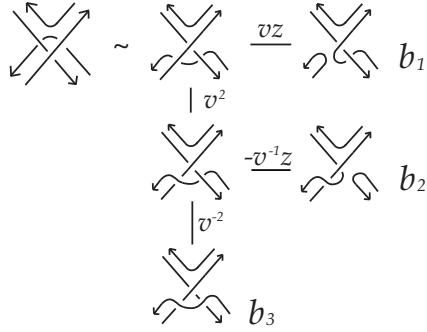} }
\caption{A partial skein tree for $D_{p}^{6}$}\label{tangle11}
\end{center}
\end{figure}
For the link $a_{6}$, we have
\begin{align}
\max\deg_{z}P_{a_{6}}(v,z)&\leq c(a_{6})-s(a_{6})+1\notag\\
&\leq (c(D_{p})-9)-(s(D_{p})-3)+1=N_{p}-6.\label{d-g-10}
\end{align}

For the link $a_{7}$, we obtain from Fig.~\ref{tangle10} that
\begin{align}\label{d-g-11}
P_{a_{7}}(v,z)&=-v^{-1}zP_{a_{8}}(v,z)+v^{-1}zP_{a_{9}}(v,z)+P_{a_{10}}(v,z).
\end{align}
Clearly, the link $a_{8}$ does not contribute anything to $\max\deg_zP_{a_{7}}(v,z)$ and so by Morton's inequality,
\begin{align}\label{d-g-12}
\max\deg_{z}P_{a_{9}}(v,z)&\leq c(a_{9})-s(a_{9})+1\notag\\
&\leq (c(D_{p})-12)-(s(D_{p})-6)+1=N_{p}-6,\\
\max\deg_{z}P_{a_{10}}(v,z)&\leq c(a_{10})-s(a_{10})+1\notag\\
&\leq (c(D_{p})-16)-(s(D_{p})-7)+1=N_{p}-9.\label{d-g-13}
\end{align}
Therefore we have from (\ref{d-g-11}), (\ref{d-g-12}) and (\ref{d-g-13}) that
\begin{align}\label{d-g-14}
\max\deg_{z}P_{a_{7}}(v,z)&\leq \max\{M(a_{9})+1, M(a_{10})\}\notag\\
&\leq \max\{N_{p}-5, N_{p}-9\}=N_{p}-5,
\end{align}
From (\ref{d-g-8}), (\ref{d-g-9}), (\ref{d-g-10}) and (\ref{d-g-14}), we have
\begin{align*}
\max\deg_{z}P_{D_{p}^{5}}(v,z)&\leq \max\{M(a_{4})+1, M(a_{6})+1, M(a_{7})\}\notag\\
&\leq \max\{N_{p}-3, N_{p}-5, N_{p}-5\}=N_{p}-3.
\end{align*}
This completes the proof of (2).

\bigskip

(3) We consider two cases separately.

\smallskip

{\bf Case 1.} If $p=3k~(k=1,2,\cdots)$, then the closed braid $\widehat{\gamma_{p}}$ is an oriented link of three components. From the skein relation for the HOMFLYPT polynomial and a partial skein tree for $D_{p}^{6}$ in Fig.~\ref{tangle11}, we obtain
\begin{align*}
P_{D_{p}^{6}}(v,z)&=(vP_{b_{1}}(v,z)-vP_{b_{2}}(v,z))z+P_{b_{3}}(v,z).
\end{align*}
Clearly, the link $b_{1}$ and  $b_{2}$ do not contribute anything to $\max\deg_zP_{D_{p}^{6}}(v,z)$.
Moving two crossings of $b_3$ labeled $1, 2$ to the place labeled $3, 4$ in $b_3$, respectively, along $2$-parallel strings by isotopy, we obtain the diagram $b_4$, which is isotopic to the diagram $b_5$ as illustrated in Fig.~\ref{tangle12}. Now we switch parallel strings of the other components in $b_{5}$ which do not incident to the crossings labeled $1, 2$ as illustrated in Fig.~\ref{tangle13} by isotopy. Then the resulting diagram, also denoted by $b_5$, is indeed $-D^4_p$, that is, the diagram $D^4_p$ with reverse orientations for all components. Hence it follows from (1) that \begin{align}
\max\deg_{z}P_{D_{p}^{6}}(v,z)&=\max\deg_{z}P_{b_{5}}(v,z)=\max\deg_{z}P_{-D_{p}^{4}}(v,z)\notag\\
&=\max\deg_{z}P_{D_{p}^{4}}(v,z)\leq N_{p}-3.\label{eq-tangle11}
\end{align}

\begin{figure}[ht]
\begin{center}
\resizebox{0.30\textwidth}{!}{%
  \includegraphics{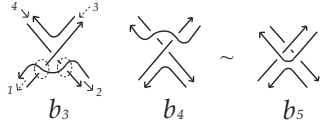} }
\caption{An isotopy deformation of $b_3$}\label{tangle12}
\end{center}
\end{figure}

\begin{figure}[ht]
\begin{center}
\resizebox{0.35\textwidth}{!}{%
  \includegraphics{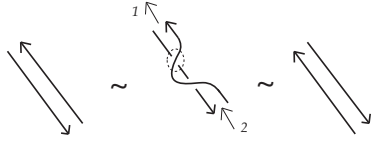} }
\caption{Switching parallel strings}\label{tangle13}
\end{center}
\end{figure}

{\bf Case 2.} If $p=3k+1$ or $p=3k+2$ $(k=1,2,\cdots)$, then $\widehat{\gamma_{p}}$ is an oriented knot.
\begin{figure}[ht]
\begin{center}
\resizebox{0.40\textwidth}{!}{%
  \includegraphics{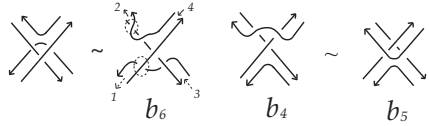} }
\caption{An isotopy deformation of $D_{p}^{6}$}\label{tangle14}
\end{center}
\end{figure}

We first observe that $D^6_p$ is clearly isotopic to the diagram $b_6$ in  Fig.~\ref{tangle14}. Moving two crossings of $b_6$ labeled $1, 2$ to the place labeled $3, 4$ in $b_6$, respectively, along $2$-parallel strings, we obtain the diagram $b_{4}$ which is isotopic to $b_5$ illustrated in Fig.~\ref{tangle14}. Since $b_5$ is just $-D^4_p$, we have (\ref{eq-tangle11}).
This completes the proof of (3).

\bigskip

(4) We consider two cases separately.

\bigskip

{\bf Case 1.} If $p=3k~(k=1,2,\cdots)$, then the closed braid $\widehat{\gamma_{p}}$ is an oriented link of three components. From the skein relation for the HOMFLYPT polynomial and a partial skein tree for $D_{p}^{7}$ in  Fig.~\ref{tangle15}, we obtain
\begin{align*}
P_{D_{p}^{7}}(v,z)&=-v^{-1}zP_{b_{7}}(v,z)+v^{-2}P_{b_{8}}(v,z).
\end{align*}
\begin{figure}[ht]
\begin{center}
\resizebox{0.40\textwidth}{!}{%
  \includegraphics{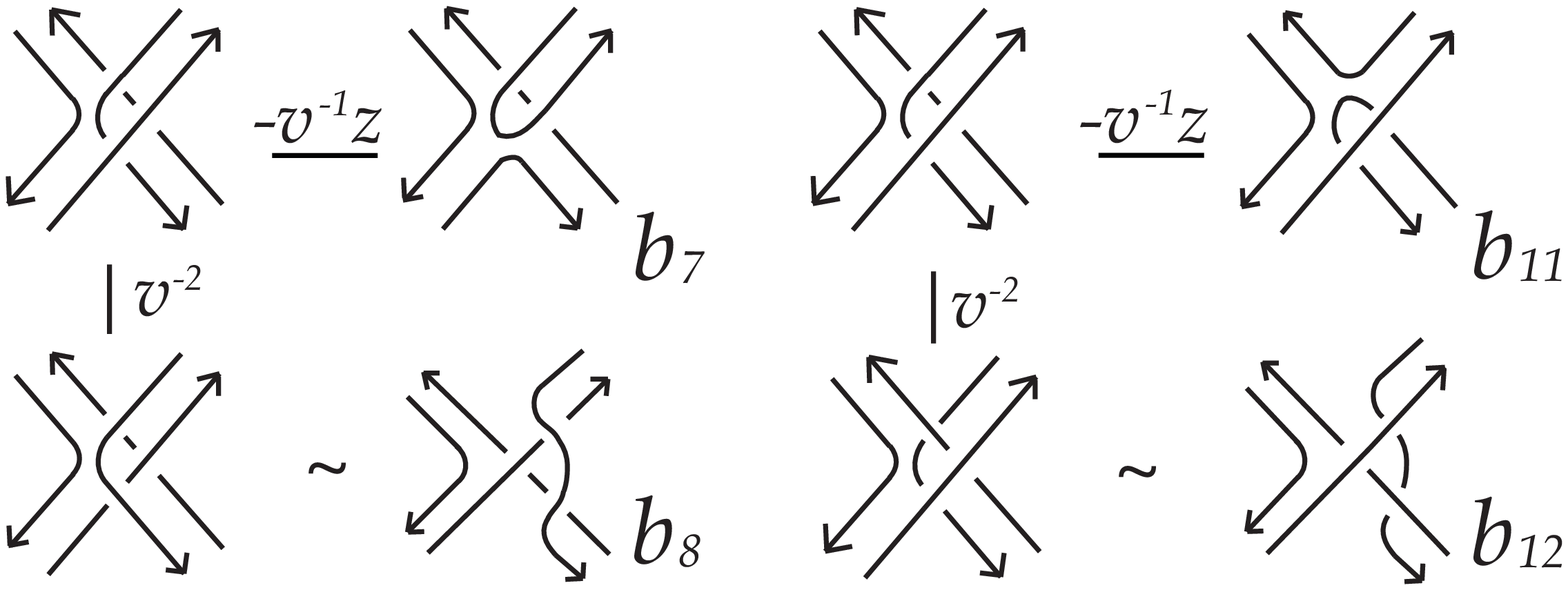} }
\caption{A partial skein tree for $D_{p}^{7}$}\label{tangle15}
\end{center}
\end{figure}

Observe that the link $b_{7}$ does not contribute anything to $\max\deg_zP_{D_{p}^{7}}(v,z)$. Moving two crossings of $b_8$ labeled $1, 2$ to the place labeled $3, 4$, respectively, along $2$-parallel strings by isotopy, we obtain the diagram $b_9$, which is isotopic to the diagram $b_{10}$ as illustrated in Fig.~\ref{tangle16}. Now we switch parallel strings of the other components in $b_{10}$ that are not incident to the crossings labeled $1, 2$ by isotopy. Then the resulting diagram is just $-D^5_p$. Hence it follows from (2) that \begin{align}
\max\deg_{z}P_{D_{p}^{7}}(v,z)&=\max\deg_{z}P_{b_{8}}(v,z)=\max\deg_{z}P_{-D_{p}^{5}}(v,z)\notag\\
&=\max\deg_{z}P_{D_{p}^{5}}(v,z)\leq N_{p}-3.\label{eq-tangle11-1}
\end{align}
\begin{figure}[ht]
\begin{center}
\resizebox{0.30\textwidth}{!}{%
  \includegraphics{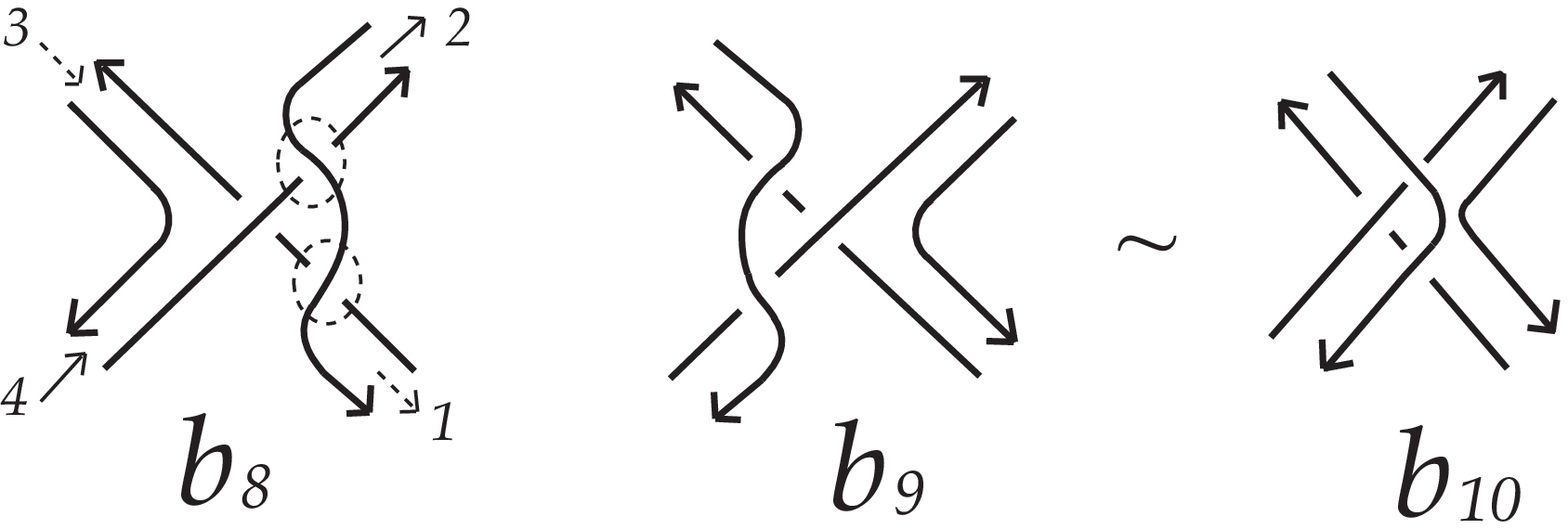} }
\caption{An isotopy deformation of $b_8$}\label{tangle16}
\end{center}
\end{figure}

\begin{figure}[ht]
\begin{center}
\resizebox{0.30\textwidth}{!}{%
  \includegraphics{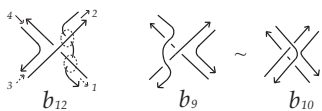} }
\caption{An isotopy deformation of $b_{12}$}\label{tangle18}
\end{center}
\end{figure}

\begin{figure}[ht]
\begin{center}
\resizebox{0.55\textwidth}{!}{%
  \includegraphics{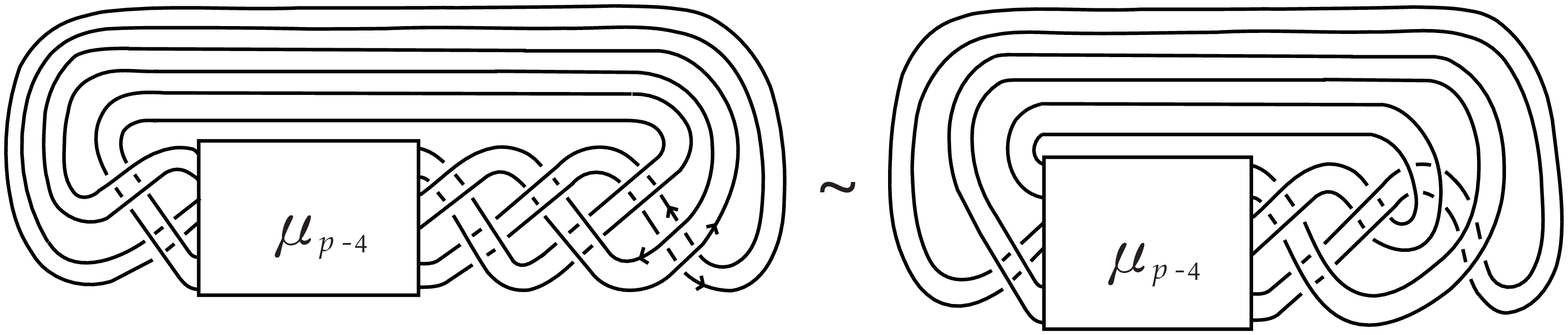} }
\caption{$D_{p}^{8}$}\label{tangle19}
\end{center}
\end{figure}

\smallskip

{\bf Case 2.} If $p=3k+1$ or $p=3k+2$ $(k=1,2,\cdots)$, then $\widehat{\gamma_{p}}$ is an oriented knot. From the skein relation for the HOMFLYPT polynomial and a partial skein tree for $D_{p}^{7}$ in Fig.~\ref{tangle15}, we obtain
\begin{align*}
P_{D_{p}^{7}}(v,z)&=-v^{-1}zP_{b_{11}}(v,z)+v^{-2}P_{b_{12}}(v,z).
\end{align*}
It is clear that the link $b_{11}$ does not contribute anything to $\max\deg_zP_{D_{p}^{7}}(v,z)$. Now, moving two crossings of $b_{12}$ labeled $1, 2$ to the place labeled $3, 4$, respectively, along $2$-parallel strings, we obtain the diagram $b_{9}$ which is isotopic to $b_{10}$ as illustrated in Fig.~\ref{tangle18}. Since $b_{10}$ is just $-D^5_p$ as above, we then have (\ref{eq-tangle11-1}).  This completes the proof of the assertion (4).

\bigskip

(5) It follows from Fig.~\ref{tangle19} and Morton's inequality that
\begin{align*}
{\rm maxdeg}_{z}P_{D_{p}^{8}}(v,z)&\leq c(D_{p}^{8})-s(D_{p}^{8})+1\notag\\
&\leq (c(D_{p})-8)-(s(D_{p})-4)+1=N_{p}-4.
\end{align*}
This completes the proof of the assertion (5), and so completes the proof of Lemma \ref{m-d-1}.
\end{proof}


\section*{Acknowledgments}
This research was supported by Basic Science Research Program through the National Research Foundation of Korea(NRF) funded by the Ministry of Education, Science and Technology(2012R1A6A3A01012801). The authors would like to thank Professor K. Tanaka for his valuable comments.


\end{document}